\newcommand{\quantq}{\mathbf{Q}}
\newcommand{\repnot}[2]{[#1]^{#2}}
 \newcommand{\sss} {\trianglelefteq}
 \newcommand{\buni} { \{ \boldsymbol{0},  \boldsymbol{1}\}^* }
\newcommand{\sigf}[3]{ \Sigma_{#1,#2,#3} }
\newcommand{\msigf}[4]{\Sigma_{#1,#2,#3}^\mathfrak{#4}}
\begin{document}             

\newcommand{\ccom}[1]{\mbox{\footnotesize{#1}}}

\setlength{\parindent}{0mm}
 \setlength{\parskip}{4pt}

\title{First-Order Concatenation Theory with Bounded Quantifiers}

\author{Lars Kristiansen\inst{1,2} \and Juvenal Murwanashyaka\inst{1}}

 \institute{
Department of Mathematics, University of Oslo, Norway
\and
    Department of Informatics, University of Oslo, Norway
}

 \maketitle                   

\newcommand{\xd}{\texttt{D}}

\newcommand{\integer}{\ensuremath{\mathbb Z}}
\newcommand{\rational}{\ensuremath{\mathbb Q}}
\newcommand{\nat}{\ensuremath{\mathbb N}}
\newcommand{\real}{\ensuremath{\mathbb R}}
\newcommand{\kleeneT}{{\mathcal T}}

\newcommand{\kleeneU}{{\mathcal U}}

\begin{abstract}
We study first-order concatenation theory with bounded quantifiers.
We give  axiomatizations with interesting properties, and we prove some normal-form results.
Finally, we prove a number of decidability and undecidability results.
\end{abstract}

\section{Introduction}

\subsection{Concatenation Theory vs.\ Number Theory}
 
First-order concatenation theory can be compared to first-order number theory, e.g.,  Peano Arithmetic or Robinson Arithmetic.
The universe of a standard structure for first-order number theory is the set of natural numbers.
The universe of a standard structure for first-order concatenation theory is the set of finite strings over some alphabet.
A first-order language for number theory normally contains two binary functions symbols. In a standard structure these 
symbols will be interpreted as addition and multiplication. A first-order language for
  concatenation theory normally contains just one binary function symbol.  In a standard structure this  symbol will be interpreted
as the operator that concatenates  two stings. A classical first-order language for concatenation theory---like e.g.\ the ones studied in Quine \cite{quine}
and Grzegorczyk \cite{grz}---contains no other non-logical symbols apart from constant symbols. 

We will stick to a version of concatenation theory 
where we have a binary alphabet consisting of the bits zero and one, and we will refer to this version as {\em bit theory}.
It is convenient to introduce and explain some  notation before we continue our discussion.

\subsection{Notation and Terminology}

We will use $\boldsymbol{0}$ and $\boldsymbol{1}$ to denote respectively the bits zero and  one,
and we use pretty standard notation when we work with bit strings: $\{ \boldsymbol{0},  \boldsymbol{1}\}^*$ denotes the set of all finite bit strings; $|\alpha|$ denotes the 
length of the bit string $\alpha$; $(\alpha)_i$ denotes the $i^{\mbox{{\scriptsize th}}}$ bit of the bit string $\alpha$; and 
$\boldsymbol{0}\boldsymbol{1}^3\boldsymbol{0}^2\boldsymbol{1}$ denotes the bit string $\boldsymbol{0}\boldsymbol{1}\boldsymbol{1}\boldsymbol{1}\boldsymbol{0}\boldsymbol{0}\boldsymbol{1}$.
The set $\{ \boldsymbol{0},  \boldsymbol{1}\}^*$
contains the empty string which we will denote $\varepsilon$.

The language $\mathcal{L}_{BT}^{-}$ of first-order bit theory consists
of the constants symbols $e,0,1$ and the binary function symbol $\circ$.
We will use $\mathfrak{B}^{-}$ to denote   the standard  $\mathcal{L}_{BT}^{-}$-structure:  
The universe of $\mathfrak{B}^{-}$ is the set $\{ \boldsymbol{0},  \boldsymbol{1}\}^*$. 
The constant symbol $0$ is interpreted as the string containing nothing but the bit $\boldsymbol{0}$, 
and the constant symbol $1$ is interpreted as the string
containing nothing but the bit $\boldsymbol{1}$, that is, $0^{\mathfrak{B}^{-}} = \boldsymbol{0}$ and  
$1^{\mathfrak{B}^{-}} = \boldsymbol{1}$. The constant symbol $e$ is interpreted as the empty string, that is, $e^{\mathfrak{B}^{-}} = \varepsilon$. 
Finally, $\circ^{\mathfrak{B}^{-}}$ is
the function that concatenates two strings 
(e.g. $\boldsymbol{0}\boldsymbol{1} \circ^{\mathfrak{B}^{-}}  \boldsymbol{0}\boldsymbol{0}\boldsymbol{0} 
=  \boldsymbol{0}\boldsymbol{1}\boldsymbol{0}\boldsymbol{0}\boldsymbol{0}$  and
$ \varepsilon \circ^{\mathfrak{B}^{-}}  \varepsilon    = \varepsilon$).

We will soon need the {\em biterals}.
For each $\alpha\in \{ \boldsymbol{0},  \boldsymbol{1}\}^*$
we define the {\em biteral} $\overline{\alpha}$ by 
$\overline{\varepsilon} = e$, $\overline{\alpha \boldsymbol{0}} = \overline{\alpha} \circ 0$ and $\overline{\alpha\boldsymbol{1}} 
= \overline{\alpha} \circ 1$. The biterals correspond to the numerals of first-order number theory:
 they serve as canonical names for the elements in the universe of the standard structure.
Note that, e.g., $((e \circ 1)\circ 1) \circ 0$ is a biteral whereas $(e\circ 1)\circ (1\circ 0)$ is not.

\subsection{Decidability and Undecidability}

At a  first glance 
the parsimonious language of  first-order bit theory does not seem very expressive, but a little bit of thought
 shows otherwise. Observe that we can encode a sequence $a_1, a_2, \ldots, a_n$ of
natural numbers by a the bit string
$$
\boldsymbol{0}\boldsymbol{0}\boldsymbol{1}\boldsymbol{0}\boldsymbol{1}^{a_1+1}\boldsymbol{0}\boldsymbol{0}
\boldsymbol{1}\boldsymbol{1}\boldsymbol{0}\boldsymbol{1}^{a_2+1}\boldsymbol{0}\boldsymbol{0}
\boldsymbol{1}^3\boldsymbol{0}\boldsymbol{1}^{a_3+1}\boldsymbol{0}\boldsymbol{0}\ldots 
\boldsymbol{0}\boldsymbol{0}\boldsymbol{1}^n\boldsymbol{0}\boldsymbol{1}^{a_n+1}\boldsymbol{0}\boldsymbol{0}\; .
$$
Furthermore, we can state that a string is a substring of another (the formula $(\exists uv)[u \circ x \circ v = y]$
holds in $\mathfrak{B}^{-}$ iff $x$ is a substring of $y$). We can state that a string contain only ones 
(the formula $\neg(\exists uv)[u \circ 0 \circ v = x]$ holds in  $\mathfrak{B}^{-}$ iff $x$ contains only ones, and so does the the formula $x\circ 1= 1\circ x$).
We can state that a bit string does not contain two consecutive occurrences of zeros, and so on. If we proceed 
along these lines, we can come up with a formula $\phi(x,y,z)$ such that 
$\phi(\overline{\boldsymbol{1}^{i}}, \overline{\boldsymbol{1}^{a}},\overline{\alpha})$ is true  in $\mathfrak{B}$ iff
$\alpha$ encodes a sequence of natural numbers where the  $i^{\mbox{{\scriptsize th}}}$ element is $a$. The reader interested in the details should consult 
 Section 8 of Leary \& Kristiansen \cite{leary}. 

First-order bit theory is indeed expressive enough to code and decode sequences of natural numbers, and then
it should be no surprise that the following theorem holds.
E.g.,  it is straightforward
to prove the theorem by induction over a Kleene-recursive definition  of $f$.

\begin{theorem}[Definability of Computable Functions]\label{bsigsafcomp}
For any (partially) computable function $f:\nat^n \rightarrow \nat$ there exists an $\mathcal{L}_{BT}^{-}$-formula $\phi(x_1,\ldots , x_n,y)$ such that
$$
f(a_1, \ldots, a_n)= b \;\; \Leftrightarrow 
\;\; \mathfrak{B}^{-} \models \phi(\overline{\boldsymbol{1}^{a_1}},\ldots , \overline{\boldsymbol{1}^{a_n}},\overline{\boldsymbol{1}^{b}})\; .
$$
Moreover, given a  definition of the function, we can compute the formula. 
\end{theorem}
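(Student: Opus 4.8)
The plan is to induct on the structure of a Kleene-style definition of $f$, treating the base functions (zero, successor, projections) and the closure operators (composition, primitive recursion and minimization) one at a time. To make the induction go through it is essential to prove a statement slightly stronger than the one asked for: for every (partial) computable $g:\nat^m\to\nat$ I will construct a formula $\psi_g(x_1,\dots,x_m,y)$ such that, for all $a_1,\dots,a_m\in\nat$ and \emph{every} string $t\in\{\boldsymbol{0},\boldsymbol{1}\}^*$,
$$
\mathfrak{B}^{-}\models\psi_g(\overline{\boldsymbol{1}^{a_1}},\dots,\overline{\boldsymbol{1}^{a_m}},t)
\iff
g(a_1,\dots,a_m)\text{ is defined and }t=\overline{\boldsymbol{1}^{\,g(a_1,\dots,a_m)}}.
$$
The point of quantifying over all $t$, and not merely over biterals, is that the formula then \emph{forces} the output to be the correct biteral (and is unsatisfiable when $g$ diverges); this is precisely what is needed to feed outputs as inputs during composition. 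The theorem is recovered by taking $t=\overline{\boldsymbol{1}^{b}}$.

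The base cases are immediate: take $\psi(x,y)\equiv(y=e)$ for the zero function, $\psi(x,y)\equiv(y=x\circ 1)$ for the successor, and $\psi(x_1,\dots,x_m,y)\equiv(y=x_i)$ for the $i^{\mathrm{th}}$ projection; in each case one checks that the strengthened biconditional holds. For composition $f(\vec a)=h(g_1(\vec a),\dots,g_k(\vec a))$ I set
$$
\psi_f(\vec x,y)\;\equiv\;\exists z_1\cdots\exists z_k\Big[\psi_{g_1}(\vec x,z_1)\wedge\cdots\wedge\psi_{g_k}(\vec x,z_k)\wedge\psi_h(z_1,\dots,z_k,y)\Big].
$$
Here the strengthened induction hypothesis does the real work: each $\psi_{g_j}(\vec x,z_j)$ pins the witness $z_j$ down to the biteral naming $g_j(\vec a)$ (and is unsatisfiable if $g_j(\vec a)$ diverges), so $\psi_h$ is evaluated on exactly the right biterals and convergence is handled uniformly.

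For minimization $f(\vec a)=\mu b\,[\,g(\vec a,b)=0\,]$ I exploit that the biterals $\overline{\boldsymbol{1}^{c}}$ with $c<b$ are exactly the proper prefixes of $\overline{\boldsymbol{1}^{b}}$ and that the value $0$ is named by $e$, giving
$$
\psi_f(\vec x,y)\;\equiv\;\psi_g(\vec x,y,e)\wedge\forall w\big[\,\exists u(u\neq e\wedge w\circ u=y)\rightarrow\exists v(\psi_g(\vec x,w,v)\wedge v\neq e)\,\big],
$$
which asserts that $g$ returns $0$ at $b$ and converges to a nonzero value at every smaller argument. The remaining and heaviest case is primitive recursion $f(\vec a,0)=g(\vec a)$, $f(\vec a,b+1)=h(\vec a,b,f(\vec a,b))$, which I handle with the sequence-coding apparatus already described before the theorem (cf.\ Section~8 of \cite{leary}): I assert the existence of a code $s$ for the run $c_0,\dots,c_b$ of intermediate values and use the decoding formula to say that its $0^{\mathrm{th}}$ element satisfies $\psi_g$, that consecutive elements are linked by $\psi_h$, and that its $b^{\mathrm{th}}$ element equals $y$. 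Spelling out this clause is the only laborious step and the one place where the genuine expressive power of the language is invoked; everything else is bookkeeping. Finally, since every clause above produces $\psi_f$ effectively from the formulas for the immediate subfunctions, the whole construction is an algorithm that reads a definition of $f$ and outputs $\phi$, which yields the ``moreover'' part.
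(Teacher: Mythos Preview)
Your approach is exactly the one the paper sketches (it gives no proof beyond the remark that one proceeds ``by induction over a Kleene-recursive definition of $f$'' and points to Section~8 of \cite{leary}), and your strengthening of the induction hypothesis to let the output slot range over arbitrary strings is precisely the right move for handling composition.

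There is, however, a small technical gap in your minimization clause. Your strengthened hypothesis controls $\psi_g$ only when \emph{all} of its input positions carry biterals $\overline{\boldsymbol{1}^{a_i}}$; only the output slot is allowed to be an arbitrary string. But in your formula $\psi_f(\vec x,y)\equiv\psi_g(\vec x,y,e)\wedge\cdots$ for $f(\vec a)=\mu b\,[g(\vec a,b)=0]$, the variable $y$ occupies an \emph{input} position of $\psi_g$. So when you try to verify the strengthened hypothesis for $f$ with an arbitrary string $t$ in place of $y$, the inductive hypothesis on $g$ no longer applies and you cannot conclude that $t$ must be a string of ones (the same remark applies to the prefixes $w$ in the second conjunct). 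The fix is immediate: add the conjunct $y\circ 1 = 1\circ y$ to $\psi_f$, which holds in $\mathfrak{B}^{-}$ iff $y\in\{\boldsymbol{1}\}^*$; then any satisfying $t$ is forced to be some $\boldsymbol{1}^c$, its proper prefixes are the $\boldsymbol{1}^{c'}$ with $c'<c$, and the hypothesis on $g$ kicks in as intended. The same care is needed when you spell out primitive recursion: make sure the decoded sequence entries are explicitly constrained to lie in $\{\boldsymbol{1}\}^*$ before they are fed into $\psi_h$.
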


This theorem implies that it is undecidable if a  $\mathcal{L}_{BT}^{-}$-sentence is true in the standard structure $\mathfrak{B}^{-}$.
It is of course also undecidable if a sentence  of first-order number theory is true in its standard structure $\mathfrak{N}$.
Indeed, due to the negative solution of Hilbert's $10^{\mbox{{\scriptsize th}}}$ problem (the Davis-Putnam-Robinson-Matiyasevich Theorem), 
it is even undecidable if a sentence of the form $$\exists x_1, \ldots , x_n[ \ s=t \ ]$$ is true in $\mathfrak{N}$. The bit-theoretic
version of Hilbert's $10^{\mbox{{\scriptsize th}}}$ problem turned out to have a positive solution. The next theorem is 
just a reformulation of a result of Makanin \cite{makanin}.

\begin{theorem}[Makanin]\label{baaaaasigsafcomp}
It is decidable if an $\mathcal{L}_{BT}^{-}$-sentence of the form $$\exists x_1, \ldots , x_n[ \ s=t \ ]$$ is true in $\mathfrak{B}^{-}$.
\end{theorem}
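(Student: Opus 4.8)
The plan is to show that the displayed sentence is, up to an effective translation, exactly a \emph{word equation}, and then to invoke Makanin's decidability result for the satisfiability of word equations over a free monoid.

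First I would exploit the algebraic laws that hold in $\mathfrak{B}^{-}$. Since $\circ^{\mathfrak{B}^{-}}$ is associative and $e^{\mathfrak{B}^{-}} = \varepsilon$ is a two-sided identity, the parenthesization of a term and the occurrences of the constant $e$ are irrelevant to its value. Hence every $\mathcal{L}_{BT}^{-}$-term $s$ in the variables $x_1, \ldots , x_n$ can be rewritten, effectively, as a word $w_s$ over the finite alphabet $\{ \boldsymbol{0},  \boldsymbol{1}, x_1, \ldots , x_n\}$ in which $\boldsymbol{0}$ and $\boldsymbol{1}$ play the role of constants and the $x_i$ play the role of unknowns: one simply reads the leaves of the term from left to right, deleting every leaf labelled $e$ and replacing $0$ by $\boldsymbol{0}$ and $1$ by $\boldsymbol{1}$. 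The key fact, provable by a trivial induction on the structure of $s$, is that for every assignment $\rho$ of strings to $x_1, \ldots , x_n$ the value of $s$ under $\rho$ equals the image of $w_s$ under the monoid homomorphism $\{ \boldsymbol{0},  \boldsymbol{1}, x_1, \ldots , x_n\}^* \to \buni$ that fixes $\boldsymbol{0}, \boldsymbol{1}$ and sends each $x_i$ to $\rho(x_i)$.

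Next I would observe that, after this translation, the sentence $\exists x_1, \ldots , x_n[\, s = t\, ]$ is true in $\mathfrak{B}^{-}$ if and only if the word equation $w_s = w_t$ has a solution in $\buni$, since a satisfying assignment is precisely a choice of strings $\rho(x_1), \ldots , \rho(x_n)$ making the two homomorphic images coincide. Both $w_s$ and $w_t$ are computable from $s$ and $t$, so the whole reduction is effective, and it therefore suffices to decide solvability of $w_s = w_t$, which is exactly what Makanin's algorithm does.

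One point deserving care is the treatment of the empty string. Makanin's original theorem concerns the free \emph{semigroup}, i.e.\ solutions in which every unknown is assigned a non-empty word, whereas here the unknowns and the constant $e$ range over all of $\buni$, including $\varepsilon$. This gap is closed by a standard device: guess, among the $2^n$ subsets of $\{ x_1, \ldots , x_n\}$, which unknowns are to be assigned $\varepsilon$; for each guess substitute $e$ for those unknowns, simplify, and test the resulting equation for a solution in which all remaining unknowns are non-empty. The sentence is true iff at least one of these finitely many free-semigroup instances is solvable, and each such instance is decidable by Makanin. The genuine mathematical difficulty---the correctness and termination of the decision procedure for word equations---is entirely contained in Makanin's theorem, which we take as given; the remaining work sketched above is only the routine reduction of $\mathcal{L}_{BT}^{-}$-equations to word equations.
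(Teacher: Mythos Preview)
Your proposal is correct and matches the paper's treatment: the paper does not give an independent proof but simply states the theorem as a reformulation of Makanin's result on word equations. You have spelled out the routine reduction (flattening $\mathcal{L}_{BT}^{-}$-terms to words over constants and unknowns, and handling the empty-string case by a finite case split) that the paper leaves implicit, and then invoked Makanin just as the paper does.
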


Hence, it is in general undecidable if a sentence is true in $\mathfrak{B}^{-}$, but it is decidable if a
sentence of the form $\exists x_1, \ldots x_n[s=t]$ is true in $\mathfrak{B}^{-}$. 
This raises the question: where do we find the border between the decidable
and the undecidable? For which subclasses of formulas can we, or can we not,
decide truth in the standard structure? Such a question yields an obvious motivation for
introducing bounded quantifiers similar to those we know 
from number theory. Once the bounded quantifiers are there, a number of other questions will knock at the door.

\subsection{Bounded Quantifiers and $\Sigma$-formulas}

The first-order language $\mathcal{L}_{BT}$ is $\mathcal{L}_{BT}^{-}$ extended by  a binary relation symbol $\sqsubseteq$. We introduce the {\em bounded
existential quantifier} $(\exists x \sqsubseteq  t)\phi$ and 
 {\em bounded
universal quantifier} $(\forall x \sqsubseteq  t)\phi$ as shorthand notations for respectively
$$
\exists x [ \ x \sqsubseteq t \, \wedge \, \phi \  ]\;\;\;\mbox{ and } 
\;\;\; \forall x [ \ x \sqsubseteq t \, \rightarrow \, \phi \ ] \; .
$$
Next we define the $\Sigma$-formulas inductively
by
\begin{itemize}
\item $\phi$ and $\neg \phi$ are $\Sigma$-formulas if $\phi$ is  of the form $s\sqsubseteq t$, or of the form 
$s = t$, where $s$ and $t$ are terms
\item $(\phi \vee \psi)$ and $(\phi \wedge \psi)$ are  $\Sigma$-formulas if $\phi$ and $\psi$ are $\Sigma$-formulas
\item  $(\exists x  \sqsubseteq t) \phi$ and $(\forall x  \sqsubseteq t) \phi$ and $(\exists x) \phi$ are 
 $\Sigma$-formulas if $\phi$ is a $\Sigma$-formula, $t$ is a term and $x$ is a variable not occurring in $t$.
\end{itemize}
A {\em purely existential formula} is  a $\Sigma$-formula that  does not contain  bounded universal quantifiers.

We assume that the reader notes the similarities with first-order number theory.
The formulas that correspond to $\Sigma$-formulas in number theory are often called $\Sigma_1$-formulas or $\Sigma^0_1$-formulas.
Now,  in contrast to in number theory, it is not clear how the relation symbol that defines the bounded quantifiers should be
interpreted. In the standard model for number theory the relation symbol  
should obviously be interpreted as the standard ordering relation of the natural numbers. 
There does not seem to be any other natural options.
We can chose between the less-than or less-than-or-equal-to relation over the natural numbers, and that is it.
In bit theory a number of essentially different interpretations will make sense. We might interpret $x \sqsubseteq y$ as
\begin{itemize}
\item  $x$ is a substring  $y$
\item  $x$ is a prefix of  $y$
\item  $x$ is shorter than  $y$.
\end{itemize}
The three interpretations listed above are the ones 
that will be investigated in the current paper, but
there are definitely other interesting options.
We might  e.g.\ interpret $\sqsubseteq$ as  the subword relation investigated in Halfon et. al.\ \cite{halfon}, or as
a lexicographical ordering relation.

\subsection{More on Notation and Terminology}

 We reserve the letter $\mathfrak{B}$ for the structure where $\sqsubseteq$ is the substring relation, that is,
the $\mathcal{L}_{BT}$-structure $\mathfrak{B}$ is the extension of $\mathfrak{B}^{-}$ where  $\alpha \sqsubseteq^\mathfrak{B} \beta$ holds 
iff there exists $\gamma,\delta\in  \buni$ such that $\gamma\alpha\delta=\beta$.

 The $\mathcal{L}_{BT}$-structure $\mathfrak{D}$ is the same structure as $\mathfrak{B}$ with one exception: the relation
$\alpha \sqsubseteq^\mathfrak{D} \beta$ holds iff $\alpha$ is a prefix of $\beta$, that is, iff 
 there exists  $\gamma \in \buni$ such that $\alpha\gamma= \beta$. 

 The $\mathcal{L}_{BT}$-structure $\mathfrak{F}$ is the same structure as $\mathfrak{B}$   with one exception: the relation
$\alpha \sqsubseteq^\mathfrak{F} \beta$ holds iff the number of bits in $\alpha$ is less than or equal to the 
number of bits in $\beta$, that is,
iff $|\alpha|\leq |\beta|$.

To improve the readability we will use the  symbol $\preceq$
in place of the symbol $\sqsubseteq$ in formulas that are meant to be interpreted in the structure  $\mathfrak{D}$. 
Thus,  $x \preceq y$
should be read as ``$x$ is a prefix of $y$''. 
Similarly, we will use $\sss$ in formulas that are meant to be interpreted in $\mathfrak{F}$,
and $x\sss y$ should be read as ``$x$ is shorter than $y$''.
We will continue to use the symbol $\sqsubseteq$ in formulas that are meant to be interpreted in  $\mathfrak{B}$. Thus,  $x \sqsubseteq y$
should be read as ``$x$ is a substring of $y$''.

 We may 
 skip the operator $\circ$ in first-order formulas and simply write $st$ in place of $s\circ t$.
Furthermore,
we will  occasionally contract quantifiers and 
write, e.g., $\forall x y \sqsubseteq z[\phi] \ $ in place of $(\forall x \sqsubseteq z)(\forall  y \sqsubseteq z) \phi $, and 
for $\sim \, \in \! \{\preceq, \sqsubseteq,\sss,  =\}$, we  will normally write $s\not \sim t$ in place of $\neg s\sim t$.

Recall that a {\em sentence} is a formula with no free variables.

\subsection{References and Related Research}

Formal concatenation theory can be traced back to Tarski \cite{tarski} and Hermes \cite{hermes} (see \cite{quinerew} for an English review).
Quine \cite{quine} and Corcoran et al.\  \cite{corcoran} are also papers on the subject that may have some historical interest.
 A rather recent line of research in concatenation theory has focused on interpretablity between weak first-order theories and  essential undecidability.
Grzegorczyk \cite{grz}, Grzegorczyk \& Zdanowski \cite{zd}, Svejdar \cite{sved}, Visser \cite{visser}, Horihata \cite{hori} and  Higuchi \& Horihata  \cite{hig}
belong to this line. Another recent line of research has focused on word equations and formal languages. Papers in this line, like e.g., 
Karhum\"aki et al.\ \cite{karh},  Ganesh et al.\ \cite{ganesh}, Halfon et al.\ \cite{halfon} and  Day et al.\ \cite{day}, are in general  oriented towards theoretical computer science.
Both lines are related to our research: the former to the first-order theories we present in Section \ref{secrefen}, the latter to the results we present in last two  sections of the paper. 
More on the history of concatenation theory can be found in  Visser \cite{visser}.

The present paper is a significantly extended an improved version of the conference paper Kristiansen \& Murwanashyaka \cite{cie18}.
We assume that the reader is familiar with the basics of mathematical logic and computability theory. An introduction can be found Leary \& Kristiansen \cite{leary}
(Chapter 8 contains some material on concatenation theory). Some familiarity with first-order arithmetic will probably also be 
 beneficial to the reader. An introduction
can be found in Hajek \& Pudlak \cite{hajek}.

\section{$\Sigma$-Complete Axiomatizations}

\label{secrefen}

Once the bounded quantifiers are present, it is natural to ask if we can find
neat and natural $\Sigma$-complete axiomatisations of bit theory. Corresponding axiomatizations of number
theory, e.g. Robinson Arithmetic,  have been of great importance in logic. 
In this section we will give
finite sets of axioms that are $\Sigma$-complete with respect to
our different structures ($\mathfrak{B}$, $\mathfrak{D}$ and $\mathfrak{F}$).
These axiomatizations might serve as base theories which  can be extended with, e.g., collection principles or
induction schemes of the form
$$
\big( \ \phi(e) \; \wedge \; \forall x [  \ \phi(x) \rightarrow (\phi(x0) \wedge \phi(x1)) \ ]\ \big)\;\; \rightarrow \;\;
\forall x [ \phi(x) ]\; .
$$

\subsection{The Theory $B^{-}$}

$B^{-}$ is a  $\mathcal{L}_{BT}^{-}$-theory.
All the first-order theories we will present contain the axioms  of $B^{-}$.

\begin{definition}
The first-order theory $B^{-}$ contains the following four non-logical axioms:
\begin{enumerate} 
\item $\forall x[ \ x= e  x \wedge x=x  e \ ] $
\item $\forall x y z [ \ (x  y)  z = x  (y   z) \ ]$
\item $\forall x y[ \ ( x \neq y) \to 
( \ ( x   0 \neq y   0)  \wedge ( x   1 \neq y   1) \ ) \ ] $
\item $ \forall x y [ \ x   0 \neq y   1 \ ] $
\end{enumerate} 
We will use $B^{-}_i$ to refer to the $i^{\mbox{{\scriptsize th}}}$ axiom of $B^{-}$.
\end{definition}

\begin{lemma}\label{seksommorgen}
Let $\alpha,\beta\in \{ \boldsymbol{0},  \boldsymbol{1}\}^*$. Then
$B^{-}\vdash \overline{\alpha} \circ \overline{\beta} = \overline{\alpha\beta}$.
\end{lemma}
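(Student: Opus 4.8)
The plan is to prove the identity by induction on the length of $\beta$, working entirely inside the equational fragment of $B^{-}$. I expect only the two ``monoid'' axioms to be needed: $B^{-}_{1}$ (the identity laws for $e$) and $B^{-}_{2}$ (associativity). The injectivity axioms $B^{-}_{3}$ and $B^{-}_{4}$ should play no role here, since the goal is a single provable equation between closed terms, with no inequality or case distinction on bits involved.

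For the base case $\beta = \varepsilon$ we have $\overline{\beta} = e$ and $\alpha\beta = \alpha$, so the goal $\overline{\alpha}\circ\overline{\beta} = \overline{\alpha\beta}$ reduces to $\overline{\alpha}\circ e = \overline{\alpha}$. First I would instantiate the second conjunct $x = xe$ of $B^{-}_{1}$ with $x := \overline{\alpha}$; this is exactly the required equation.

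For the inductive step I assume $B^{-}\vdash\overline{\alpha}\circ\overline{\beta} = \overline{\alpha\beta}$ and extend $\beta$ by one bit. The cases of appending $\boldsymbol{0}$ or $\boldsymbol{1}$ are symmetric, so I treat $\beta\boldsymbol{0}$. By the defining clauses of the biterals, $\overline{\beta\boldsymbol{0}} = \overline{\beta}\circ 0$ and $\overline{\alpha\beta\boldsymbol{0}} = \overline{\alpha\beta}\circ 0$, the latter using the (metatheoretic) associativity of string concatenation, $\alpha(\beta\boldsymbol{0}) = (\alpha\beta)\boldsymbol{0}$. Then I compute inside $B^{-}$:
\[
\overline{\alpha}\circ\overline{\beta\boldsymbol{0}}
\;=\; \overline{\alpha}\circ(\overline{\beta}\circ 0)
\;=\; (\overline{\alpha}\circ\overline{\beta})\circ 0
\;=\; \overline{\alpha\beta}\circ 0
\;=\; \overline{\alpha\beta\boldsymbol{0}},
\]
where the second equality is an instance of associativity $B^{-}_{2}$ and the third is the induction hypothesis.

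The argument is routine and I do not anticipate a genuine obstacle. The only points that demand a little care are notational: one must keep the object-level function symbol $\circ$ separate from metatheoretic concatenation of strings, and one must read off the correct unfolding of the biteral, namely $\overline{\alpha\beta\boldsymbol{0}} = \overline{\alpha\beta}\circ 0$, which follows immediately from the inductive clause defining $\overline{\,\cdot\,}$. Performing the induction on $\beta$ rather than on $\alpha$ is what keeps the appended bit on the right, which is precisely where associativity lets us regroup the term.
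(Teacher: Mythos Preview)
Your proof is correct and essentially identical to the paper's own argument: induction on the length of $\beta$, base case handled by $B^{-}_{1}$, and the inductive step for $\beta\boldsymbol{0}$ (resp.\ $\beta\boldsymbol{1}$) by applying the induction hypothesis and then associativity $B^{-}_{2}$ to regroup. Your additional remark that only $B^{-}_{1}$ and $B^{-}_{2}$ are needed is also accurate.
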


\begin{proof}
We prove the lemma by induction on the length of $\beta$, and we consider the following cases: $\beta \equiv \varepsilon$, 
$\beta \equiv \gamma\boldsymbol{0}$ and $\beta \equiv \gamma\boldsymbol{1}$.

Assume $\beta \equiv \varepsilon$. Then $\overline{\beta}\equiv e$, and the lemma holds by the axiom $B^{-}_1$.
Assume $\beta \equiv \gamma\boldsymbol{0}$. By the induction hypothesis, we have
$B^{-}\vdash \overline{\alpha} \circ \overline{\gamma} = \overline{\alpha\gamma}$.
Thus, we also have
$$B^{-}\vdash (\overline{\alpha} \circ \overline{\gamma}) \circ 0 = \overline{\alpha\gamma} \circ 0\; .$$
By $B^{-}_2$, we have
$$B^{-}\vdash \overline{\alpha} \circ (\overline{\gamma} \circ 0) = \overline{\alpha\gamma}  \circ 0 \; .$$
Thus, the lemma holds as $\overline{\beta} \equiv \overline{\gamma} \circ 0$ and 
$\overline{\alpha\beta} \equiv \overline{\alpha\gamma\boldsymbol{0}} \equiv \overline{\alpha\gamma} \circ 0$.
The case  $\beta \equiv \gamma\boldsymbol{1}$ is similar.
\qed
\end{proof}

\begin{lemma}\label{adagenetter}
For any  $\mathcal{L}_{BT}$-term $t$ there is $\alpha\in \{ \boldsymbol{0},  \boldsymbol{1}\}^*$ such that
$B^{-}\vdash t=\overline{\alpha}$.
\end{lemma}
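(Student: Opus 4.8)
The plan is to treat the statement as concerning \emph{closed} terms (the only terms for which the conclusion can hold, since a biteral $\overline{\alpha}$ is itself a closed term) and to argue by structural induction on $t$. The induction has three base cases, one for each constant symbol, and a single inductive case for the function symbol $\circ$.

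For the base cases, I would first observe that $e \equiv \overline{\varepsilon}$ by the definition of the biteral, so here $\alpha = \varepsilon$ works with nothing to prove. The two remaining constants require a small argument, since the biterals naming the one-bit strings are $\overline{\boldsymbol{0}} \equiv e \circ 0$ and $\overline{\boldsymbol{1}} \equiv e \circ 1$, which are not literally the symbols $0$ and $1$. To bridge this gap I would instantiate axiom $B^{-}_1$, namely $\forall x[\ x = e x \wedge x = x e\ ]$, at $x = 0$ and at $x = 1$, obtaining $B^{-} \vdash 0 = e \circ 0 \equiv \overline{\boldsymbol{0}}$ and $B^{-} \vdash 1 = e \circ 1 \equiv \overline{\boldsymbol{1}}$.

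For the inductive step, suppose $t \equiv t_1 \circ t_2$. By the induction hypothesis there are $\alpha, \beta \in \buni$ with $B^{-} \vdash t_1 = \overline{\alpha}$ and $B^{-} \vdash t_2 = \overline{\beta}$, whence $B^{-} \vdash t_1 \circ t_2 = \overline{\alpha} \circ \overline{\beta}$ by congruence of equality. Now Lemma \ref{seksommorgen} gives $B^{-} \vdash \overline{\alpha} \circ \overline{\beta} = \overline{\alpha\beta}$, and chaining the two provable equations yields $B^{-} \vdash t = \overline{\alpha\beta}$, so $\alpha\beta$ is the required string.

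There is no serious obstacle here; the argument is a routine structural induction resting on the previously established Lemma \ref{seksommorgen}. The only point that requires any care is the treatment of the constants $0$ and $1$ in the base case, where one must remember that the canonical names of the single-bit strings are $e \circ 0$ and $e \circ 1$ rather than the bare constant symbols, so that axiom $B^{-}_1$ is genuinely needed.
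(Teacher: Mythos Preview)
Your proof is correct and follows essentially the same route as the paper's own argument: structural induction on the (closed) term $t$, handling $e$ directly via $\overline{\varepsilon}\equiv e$, invoking $B^{-}_1$ for the constants $0$ and $1$, and appealing to Lemma~\ref{seksommorgen} together with congruence in the $\circ$-case. Your explicit remark that the lemma is really about closed terms is a welcome clarification that the paper leaves implicit.
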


\begin{proof} 
We will use induction on the structure of $t$.
Assume $t\equiv e$. Obviously,  $B^{-}\vdash e=e$. Thus, we have $B^{-}\vdash e =\overline{\varepsilon}$ as $\overline{\varepsilon}\equiv e$. 
Assume $t\equiv 0$. By $B^{-}_1$, we have $B^{-}\vdash 0 = e\circ 0$. Thus, we have  $B^{-}\vdash 0 = \overline{\boldsymbol{0}}$ as 
$\overline{\boldsymbol{0}} = e\circ 0$. The case  $t\equiv 1$ is similar to the case  $t\equiv 0$. Finally,
assume $t\equiv t_1\circ t_2$. By induction hypothesis we have $\alpha_1, \alpha_2 \in \{ \boldsymbol{0},  \boldsymbol{1}\}^*$
such that $B^{-}\vdash t_1=\overline{\alpha_1}$ and $B^{-}\vdash t_2=\overline{\alpha_2}$. By Lemma \ref{seksommorgen}, we have
$B^{-}\vdash \overline{\alpha_1} \circ \overline{\alpha_2} = \overline{\alpha_1\alpha_2}$.
Thus, $B^{-}\vdash t =\overline{\alpha_1\alpha_2}$.
\qed
\end{proof}

\begin{lemma} \label{gammelbtre}
$  B^{-}_1, B^{-}_2, B^{-}_4 
\vdash \forall x[ \ x 0 \neq e \wedge   x1 \neq e \ ]$.
\end{lemma}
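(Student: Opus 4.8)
The plan is to fix an arbitrary $x$ and prove the two conjuncts separately, each by reductio. The guiding observation is that, among $B^{-}_1, B^{-}_2, B^{-}_4$, the only axiom capable of producing an inequality is $B^{-}_4$, which forbids any string of the form $s\circ 0$ from being equal to a string of the form $t\circ 1$. So in each case I want to derive, from the assumed equation, an identity of exactly this forbidden shape. The only other tools available are associativity ($B^{-}_2$), the identity laws ($B^{-}_1$), and congruence of equality, which is a purely logical principle.

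For the first conjunct I would assume $x\circ 0 = e$ and prepend the bit $1$ on the left of both sides, obtaining $1\circ(x\circ 0) = 1 \circ e$. By $B^{-}_2$ the left-hand side rewrites as $(1\circ x)\circ 0$, and by $B^{-}_1$ the right-hand side is just $1$; applying $B^{-}_1$ once more gives $1 = e\circ 1$. Chaining these equalities yields $(1\circ x)\circ 0 = e\circ 1$, which is an instance of the forbidden pattern $s\circ 0 = t\circ 1$ with $s \equiv 1\circ x$ and $t\equiv e$, contradicting $B^{-}_4$. Hence $x\circ 0\neq e$.

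The second conjunct is entirely symmetric: assuming $x\circ 1 = e$, I would prepend $0$ on the left, use $B^{-}_2$ to rewrite $0\circ(x\circ 1)$ as $(0\circ x)\circ 1$, and use $B^{-}_1$ to evaluate $0\circ e = 0 = e\circ 0$, arriving at $e\circ 0 = (0\circ x)\circ 1$, which again contradicts $B^{-}_4$. The one point worth noting is the trick that carries the whole argument: the empty string $e$ can be disguised as both $e\circ 1$ and $e\circ 0$ by the identity axiom, so prepending the appropriate bit and pushing it through associativity manufactures the clash required by $B^{-}_4$. Beyond selecting which bit to prepend there is no real calculation to perform, so I do not expect any genuine obstacle.
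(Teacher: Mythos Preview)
Your proof is correct and is essentially identical to the paper's own argument: assume the offending equation, prepend the opposite bit, reassociate via $B^{-}_2$, simplify with $B^{-}_1$ to expose $e\circ 1$ (respectively $e\circ 0$), and invoke $B^{-}_4$. The only cosmetic difference is that the paper spells out just the case $x0\neq e$ and appeals to symmetry for the other, exactly as you do.
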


\begin{proof}
We reason in an arbitrary model for $\{ B^{-}_1, B^{-}_2, B^{-}_4 \}$.
Let $x$ be an arbitrary element in the universe.
Assume $x0=e$. Then $1(x0)=1e$. By $B^{-}_1$, we have $1(x0)=1$.
By $B^{-}_2$, we have $(1x)0=1$. By $B^{-}_1$, we have  $(1x)0=e1$.
This contradicts $B^{-}_4$. This proves that $x0\neq e$.
A symmetric argument shows that  $x1\neq e$. 
\qed
\end{proof}

\begin{lemma}\label{siste}
Let $\alpha,\beta\in \{ \boldsymbol{0},  \boldsymbol{1}\}^*$ and $\alpha\neq \beta$. Then
$B^{-}\vdash \overline{\alpha}  \neq \overline{\beta}$.
\end{lemma}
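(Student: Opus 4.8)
The plan is to argue by induction on the total length $|\alpha| + |\beta|$, splitting first on whether one of the two strings is empty and, when both are nonempty, on whether their final bits coincide. In each case the goal $B^{-}\vdash\overline{\alpha}\neq\overline{\beta}$ will reduce either to the induction hypothesis or to a single instance of one axiom applied to closed biteral terms.

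First I would dispose of the case where one argument is empty. Suppose $\alpha\equiv\varepsilon$ and $\beta\not\equiv\varepsilon$; then $\beta\equiv\gamma b$ for a bit $b\in\{\boldsymbol{0},\boldsymbol{1}\}$, so that $\overline{\alpha}\equiv e$ while $\overline{\beta}\equiv\overline{\gamma}\circ 0$ or $\overline{\beta}\equiv\overline{\gamma}\circ 1$. Lemma \ref{gammelbtre} supplies exactly $B^{-}\vdash\overline{\gamma}\circ 0\neq e$ and $B^{-}\vdash\overline{\gamma}\circ 1\neq e$, hence $B^{-}\vdash\overline{\beta}\neq\overline{\alpha}$; the symmetric case $\beta\equiv\varepsilon$ is identical. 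This is the one place where the four axioms in isolation do not suffice and the earlier lemma must be invoked.

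Now suppose both strings are nonempty, say $\alpha\equiv\alpha' a$ and $\beta\equiv\beta' b$ with $a,b\in\{\boldsymbol{0},\boldsymbol{1}\}$, writing $0,1$ for the corresponding constant symbols in the associated biterals. If $a\neq b$ then, up to swapping $\alpha$ and $\beta$, we have $\overline{\alpha}\equiv\overline{\alpha'}\circ 0$ and $\overline{\beta}\equiv\overline{\beta'}\circ 1$, and the desired inequality is a direct instance of $B^{-}_4$. If instead $a$ and $b$ are the same bit, then $\alpha\neq\beta$ forces $\alpha'\neq\beta'$; since $|\alpha'|+|\beta'|<|\alpha|+|\beta|$, the induction hypothesis gives $B^{-}\vdash\overline{\alpha'}\neq\overline{\beta'}$, and reattaching the common final bit through $B^{-}_3$ yields $B^{-}\vdash\overline{\alpha'}\circ a\neq\overline{\beta'}\circ a$, that is, $B^{-}\vdash\overline{\alpha}\neq\overline{\beta}$.

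I do not expect a genuine obstacle: every step is either an appeal to the induction hypothesis or an instance of a single axiom. The only points needing care are choosing the measure $|\alpha|+|\beta|$ so that stripping the common last bit strictly decreases it, and recognising the division of labour among the axioms, namely that $B^{-}_3$ is precisely the contrapositive cancellation principle that lets one reattach a shared final bit, while $B^{-}_4$ settles the case of distinct final bits and Lemma \ref{gammelbtre} handles the empty argument.
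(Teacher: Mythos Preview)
Your proof is correct and follows essentially the same structure as the paper's: the paper inducts on $\min(|\alpha|,|\beta|)$ rather than $|\alpha|+|\beta|$, but the case split (empty argument via Lemma~\ref{gammelbtre}; distinct last bits via $B^{-}_4$; equal last bits via the induction hypothesis and $B^{-}_3$) is identical.
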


\begin{proof}
We prove the lemma by induction on the natural number $\min(|\alpha|,|\beta|)$.
Assume $\min(|\alpha|,|\beta|)=0$. Then,  either $\alpha$ or $\beta$ will be the empty string $\varepsilon$.
Furthermore, it cannot be the case that both $\alpha$ or $\beta$ are the empty string as $\alpha\neq \beta$.
By Lemma \ref{gammelbtre}, we have $B^{-}\vdash \overline{\alpha}  \neq \overline{\beta}$.

Assume $\min(|\alpha|,|\beta|)>0$. Then we have $\alpha\equiv \alpha'a$
and $\beta\equiv \beta'b$ where $\alpha,\beta\in \{ \boldsymbol{0},  \boldsymbol{1}\}^*$ and 
$a,b\in \{ \boldsymbol{0},  \boldsymbol{1}\}$. The proof splits into two cases: (i) $a$ and $b$ are equal, and (ii)
$a$ and $b$ are different. {\sl Case (i):} The induction hypothesis yields 
$ B^{-}\vdash \overline{\alpha'}  \neq \overline{\beta'}$, and the lemma follows by $B^{-}_3$. 
{\sl Case (ii):} The lemma follows straightaway by $B^{-}_4$ (we do not need
the induction hypothesis).
\qed
\end{proof}

We leave the proof of the next theorem to the reader (see the proof of Theorem \ref{bsigcomp}).

\begin{theorem}\label{purebsigcomp}
For any purely existential sentence $\phi$, we have
$$
 \mathfrak{B}^{-} \models \phi  \; \Rightarrow \; B^{-}\vdash \phi \; .
$$
\end{theorem}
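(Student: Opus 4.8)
The plan is to prove a slightly stronger statement by structural induction on purely existential formulas, strengthened to allow free variables so that the induction survives passage through the existential quantifier. Concretely, I would show: for every purely existential $\mathcal{L}_{BT}^{-}$-formula $\phi(x_1,\dots,x_n)$ and all $\alpha_1,\dots,\alpha_n\in\{\boldsymbol{0},\boldsymbol{1}\}^*$, if $\mathfrak{B}^{-}\models\phi(\overline{\alpha_1},\dots,\overline{\alpha_n})$, then $B^{-}\vdash\phi(\overline{\alpha_1},\dots,\overline{\alpha_n})$. The theorem is then the special case $n=0$. Since $\mathcal{L}_{BT}^{-}$ contains no relation symbol, every atom of $\phi$ is an equality, so the base cases are atoms $s=t$ and negated atoms $s\neq t$, and the remaining cases are $\vee$, $\wedge$ and the unbounded existential quantifier.

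For the atomic base cases, note that after substituting the biterals $\overline{\alpha_i}$ for the free variables, the terms $s$ and $t$ occurring in the atom become closed $\mathcal{L}_{BT}^{-}$-terms. By Lemma \ref{adagenetter} there are strings $\gamma,\delta$ with $B^{-}\vdash s=\overline{\gamma}$ and $B^{-}\vdash t=\overline{\delta}$; and since $\mathfrak{B}^{-}$ evaluates a biteral $\overline{\mu}$ to the string $\mu$, we also have $s^{\mathfrak{B}^{-}}=\gamma$ and $t^{\mathfrak{B}^{-}}=\delta$. If the true atom is $s=t$, then $\gamma=\delta$ as strings, so $\overline{\gamma}$ and $\overline{\delta}$ are literally the same term, and chaining $B^{-}\vdash s=\overline{\gamma}$ with $B^{-}\vdash t=\overline{\delta}$ yields $B^{-}\vdash s=t$. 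If the true atom is $s\neq t$, then $\gamma\neq\delta$, and Lemma \ref{siste} gives $B^{-}\vdash\overline{\gamma}\neq\overline{\delta}$, whence $B^{-}\vdash s\neq t$.

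The inductive cases are then immediate. If $\phi\vee\psi$ holds in $\mathfrak{B}^{-}$, one disjunct holds and is provable by the induction hypothesis, so the disjunction is provable; if $\phi\wedge\psi$ holds, both conjuncts are provable and so is the conjunction. For the existential case $(\exists x)\psi(x,\vec{x})$, truth in $\mathfrak{B}^{-}$ furnishes a witnessing string $\beta$ with $\mathfrak{B}^{-}\models\psi(\overline{\beta},\overline{\vec{\alpha}})$; the induction hypothesis gives $B^{-}\vdash\psi(\overline{\beta},\overline{\vec{\alpha}})$, and existential introduction (instantiating $x$ by the closed term $\overline{\beta}$) yields $B^{-}\vdash(\exists x)\psi(x,\overline{\vec{\alpha}})$.

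The only genuinely delicate step, and the one I expect to require the most care, is the atomic base case: everything rests on the biterals being faithful canonical names, so that provable equality and provable inequality of biterals exactly mirror equality and inequality of the underlying strings. This is precisely what the preceding lemmas were arranged to deliver—Lemma \ref{adagenetter} normalizes an arbitrary closed term to a biteral, while Lemma \ref{siste} certifies that distinct strings receive provably distinct biterals—so once the induction hypothesis is correctly strengthened to open formulas, the remaining cases are routine and the argument goes through uniformly.
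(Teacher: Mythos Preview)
Your proof is correct and takes essentially the same approach as the paper: the paper leaves this theorem to the reader with a pointer to the proof of Theorem~\ref{bsigcomp}, which proceeds by the same structural induction, using Lemma~\ref{adagenetter} for the case $s=t$, Lemma~\ref{siste} for the case $s\neq t$, and the obvious arguments for $\vee$, $\wedge$, and $\exists$. Your explicit strengthening of the induction hypothesis to open formulas with biteral substitutions is precisely what the paper does implicitly when it applies the induction hypothesis to $\psi(\overline{\alpha})$ in the existential case.
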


\subsection{The Structure $\mathfrak{B}$}

\begin{definition}
The first-order theory $B$ contains the following eleven non-logical axioms:
\begin{itemize}
\item[-] the first four axioms are the axioms of $B^{-}$ 
\item[5.] $\forall x [ \ x \sqsubseteq e \leftrightarrow x=e \ ] $
\item[6.] $\forall x[ \ x \sqsubseteq 0 \leftrightarrow (x=e \vee x = 0) \ ] $
\item[7.] $\forall x [ \ x \sqsubseteq 1 \leftrightarrow (x=e \vee x = 1) \ ] $
\item[8.] 
$ \forall x y [ \ x \sqsubseteq 0  y   0 \leftrightarrow (x = 0  y   0 \vee x \sqsubseteq 0  y  \vee x \sqsubseteq y  0 ) \ ]     $
\item[9.] $\forall x y [ \ x \sqsubseteq 0  y   1 \leftrightarrow (x = 0  y   1 \vee x \sqsubseteq 0  y  \vee x \sqsubseteq y  1 ) \ ]     $
\item[10.]
 $ \forall x  y [ \ x \sqsubseteq 1  y   0 \leftrightarrow (x = 1  y   0 \vee x \sqsubseteq 1  y  \vee x \sqsubseteq y  0 ) \ ]     $
\item[11.] 
$ \forall x y [ \ x \sqsubseteq 1  y   1 \leftrightarrow (x = 1  y   1 \vee x \sqsubseteq 1  y  \vee x \sqsubseteq y  1 ) \ ]     $
\end{itemize} 
We will use $B_i$ to refer to the $i^{\mbox{{\scriptsize th}}}$ axiom of $B$.
\end{definition}

\begin{lemma}\label{billy}
Let $\alpha,\beta\in \{ \boldsymbol{0},  \boldsymbol{1}\}^*$ and $\alpha \sqsubseteq^\mathfrak{B} \beta$ (i.e. $\alpha$ is a substring
of $\beta$). Then
$B\vdash \overline{\alpha}  \sqsubseteq \overline{\beta}$. 
\end{lemma}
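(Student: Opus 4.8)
The plan is to prove $B\vdash \overline{\alpha}\sqsubseteq\overline{\beta}$ by induction on the length of $\beta$, mirroring the recursive structure of the substring axioms $B_5$ through $B_{11}$. These seven axioms are arranged exactly so that the question ``is $\overline{\alpha}$ a substring of $\overline{\beta}$?'' can be peeled apart one bit at a time from the left end of $\beta$, so the induction should follow the case analysis built into the axioms themselves.

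First I would handle the base cases where $|\beta|\leq 1$, i.e.\ $\beta\equiv\varepsilon$, $\beta\equiv\boldsymbol{0}$, or $\beta\equiv\boldsymbol{1}$. Here the only substrings of $\beta$ are $\varepsilon$ and $\beta$ itself, and these are covered directly by axioms $B_5$, $B_6$ and $B_7$ respectively: the right-to-left direction of each biconditional lets us derive $\overline{\alpha}\sqsubseteq\overline{\beta}$ from $\overline{\alpha}=\overline{\beta}$ (or from $\overline{\alpha}=e$), and Lemma~\ref{seksommorgen} guarantees $\overline{\alpha}$ and $\overline{\beta}$ are provably the biterals we expect. For the inductive step, $\beta$ has length at least $2$, so we may write $\beta\equiv a\gamma b$ with $a,b\in\{\boldsymbol{0},\boldsymbol{1}\}$ and $\gamma\in\{\boldsymbol{0},\boldsymbol{1}\}^*$; the relevant one of $B_8$ through $B_{11}$ then applies to $\overline{\beta}\equiv\overline{a}\circ\overline{\gamma}\circ\overline{b}$ (using Lemma~\ref{seksommorgen} to rewrite the biteral into this factored form matching the axiom's left-hand side $\overline{a}\,y\,\overline{b}$ with $y$ instantiated to $\overline{\gamma}$).

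The inductive step splits according to where the occurrence of $\alpha$ as a substring of $\beta\equiv a\gamma b$ sits. If $\alpha\equiv\beta$, the axiom's first disjunct $x=\overline{a}\,\overline{\gamma}\,\overline{b}$ gives the claim immediately. Otherwise the occurrence of $\alpha$ either misses the final bit $b$, in which case $\alpha$ is a substring of $a\gamma$, or misses the initial bit $a$, in which case $\alpha$ is a substring of $\gamma b$; both $a\gamma$ and $\gamma b$ are strictly shorter than $\beta$, so the induction hypothesis yields $\overline{\alpha}\sqsubseteq\overline{a\gamma}$ or $\overline{\alpha}\sqsubseteq\overline{\gamma b}$, and the corresponding disjunct of the axiom (the second or third) delivers $\overline{\alpha}\sqsubseteq\overline{\beta}$.

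The main obstacle is the combinatorial bookkeeping in the splitting step: I must argue that \emph{any} occurrence of $\alpha$ in $a\gamma b$ that is not the whole string is entirely contained in the prefix $a\gamma$ or in the suffix $\gamma b$. This is true because an occurrence strictly shorter than $\beta$ must omit at least one of the two end bits, but care is needed when $\alpha$ is empty or when $\alpha$ coincides with $a\gamma$ or $\gamma b$ (so that the reduced problem lands on one of the earlier-length cases rather than a genuinely shorter $\beta$). I would address this by organizing the metatheoretic case analysis on the leftmost occurrence position of $\alpha$ in $\beta$ and confirming it falls into exactly one of the three axiom disjuncts, so that the induction hypothesis is always invoked on a strictly shorter second argument. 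Everything else is routine equational reasoning inside $B$ using Lemma~\ref{seksommorgen}.
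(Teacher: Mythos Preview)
Your proposal is correct and follows essentially the same route as the paper: induction on $|\beta|$ with base cases $\varepsilon,\boldsymbol{0},\boldsymbol{1}$ handled by $B_5,B_6,B_7$, and the inductive step on $\beta\equiv a\gamma b$ handled by whichever of $B_8$--$B_{11}$ applies, via the three-way split $\alpha=\beta$, $\alpha\sqsubseteq^{\mathfrak B} a\gamma$, or $\alpha\sqsubseteq^{\mathfrak B}\gamma b$. The ``main obstacle'' you flag is not really one: since $|a\gamma|=|\gamma b|=|\beta|-1$, the induction hypothesis applies directly in those subcases regardless of whether $\alpha$ happens to equal $a\gamma$ or $\gamma b$, so no special organization by leftmost occurrence is needed.
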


\begin{proof}
We prove this lemma by 
induction on the length of $\beta$. The proof splits into the cases $\beta\equiv \varepsilon$, $\beta\equiv \boldsymbol{0}$,
$\beta\equiv \boldsymbol{1}$, $\beta\equiv \boldsymbol{0}\gamma \boldsymbol{0}$, $\beta\equiv \boldsymbol{0}\gamma \boldsymbol{1}$, $\beta\equiv \boldsymbol{1}\gamma \boldsymbol{0}$
 and $\beta\equiv \boldsymbol{1}\gamma \boldsymbol{1}$.

In the cases when $\beta$ is empty or just contains a
single bit, the lemma holds by $B_5$, $B_6$ or $B_7$.

Assume $\beta\equiv \boldsymbol{0}\gamma\boldsymbol{0}$. The proof splits into the   cases
$$ \mbox{(i) $\alpha= \boldsymbol{0}\gamma\boldsymbol{0}$,
(ii) $\alpha \sqsubseteq^\mathfrak{B}  \boldsymbol{0}\gamma$ and (iii) $\alpha \sqsubseteq^\mathfrak{B}  \gamma\boldsymbol{0}$.}$$
In case (i), we  have $B \vdash \overline{\alpha}  = \overline{\beta}$ by logical axioms. By $B_8$, we have $B\vdash \overline{\alpha}  \sqsubseteq \overline{\beta}$. In case (ii), the induction hypothesis yields 
$B\vdash \overline{\alpha}  \sqsubseteq \overline{ \boldsymbol{0}\gamma}$. By $B_8$, we have $B\vdash \overline{\alpha}  \sqsubseteq \overline{\beta}$.
In case (iii), the induction hypothesis yields 
$B\vdash \overline{\alpha}  \sqsubseteq \overline{ \gamma\boldsymbol{0}}$. 
By $B_8$, we have $B\vdash \overline{\alpha} \sqsubseteq \overline{\beta}$.

The cases  $\beta\equiv \boldsymbol{0}\gamma\boldsymbol{1}$, $\beta\equiv \boldsymbol{1}\gamma\boldsymbol{0}$
and $\beta\equiv \boldsymbol{1}\gamma\boldsymbol{1}$ are similar to the case  $\beta\equiv \boldsymbol{0}\gamma\boldsymbol{0}$,
use $B_9$, $B_{10}$ and $B_{11}$, respectively, in place of $B_8$. 
\qed
\end{proof}

\begin{lemma}\label{bbilly}
Let $\alpha,\beta\in \{ \boldsymbol{0},  \boldsymbol{1}\}^*$ 
and $\alpha \not\sqsubseteq^\mathfrak{B} \beta$ (i.e. $\alpha$ is a not substring
of $\beta$). Then
$B\vdash \overline{\alpha}  \not\sqsubseteq \overline{\beta}$.
\end{lemma}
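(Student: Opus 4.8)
The plan is to dualize the proof of Lemma~\ref{billy}: I again induct on $|\beta|$ and split into the seven cases $\beta\equiv\varepsilon$, $\beta\equiv\boldsymbol{0}$, $\beta\equiv\boldsymbol{1}$, $\beta\equiv\boldsymbol{0}\gamma\boldsymbol{0}$, $\beta\equiv\boldsymbol{0}\gamma\boldsymbol{1}$, $\beta\equiv\boldsymbol{1}\gamma\boldsymbol{0}$, $\beta\equiv\boldsymbol{1}\gamma\boldsymbol{1}$, but now I use the \emph{left-to-right} direction of the biconditional axioms $B_5$--$B_{11}$ together with the provable distinctness of distinct biterals furnished by Lemma~\ref{siste}.

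For the base cases, the only substrings of $\varepsilon$, $\boldsymbol{0}$, $\boldsymbol{1}$ are $\{\varepsilon\}$, $\{\varepsilon,\boldsymbol{0}\}$, $\{\varepsilon,\boldsymbol{1}\}$ respectively. Since $\alpha$ is none of the admissible substrings, Lemma~\ref{siste} gives $B\vdash\overline{\alpha}\neq\overline{\delta}$ for each admissible $\delta$, and the left-to-right direction of $B_5$, $B_6$ or $B_7$ converts these inequalities into $B\vdash\overline{\alpha}\not\sqsubseteq\overline{\beta}$.

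For the inductive step I treat the representative case $\beta\equiv\boldsymbol{0}\gamma\boldsymbol{0}$ (the other three use $B_9$, $B_{10}$, $B_{11}$ identically). Combinatorially, a string is a substring of $\boldsymbol{0}\gamma\boldsymbol{0}$ iff it equals $\boldsymbol{0}\gamma\boldsymbol{0}$, or is a substring of $\boldsymbol{0}\gamma$, or is a substring of $\gamma\boldsymbol{0}$ --- exactly the disjunction internalised by $B_8$. Hence $\alpha\not\sqsubseteq^{\mathfrak{B}}\boldsymbol{0}\gamma\boldsymbol{0}$ forces all three alternatives to fail: $\alpha\neq\boldsymbol{0}\gamma\boldsymbol{0}$, $\alpha\not\sqsubseteq^{\mathfrak{B}}\boldsymbol{0}\gamma$ and $\alpha\not\sqsubseteq^{\mathfrak{B}}\gamma\boldsymbol{0}$. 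Lemma~\ref{siste} turns the first into $B\vdash\overline{\alpha}\neq\overline{\boldsymbol{0}\gamma\boldsymbol{0}}$, and the induction hypothesis (applicable as $|\boldsymbol{0}\gamma|=|\gamma\boldsymbol{0}|=|\beta|-1$) turns the other two into $B\vdash\overline{\alpha}\not\sqsubseteq\overline{\boldsymbol{0}\gamma}$ and $B\vdash\overline{\alpha}\not\sqsubseteq\overline{\gamma\boldsymbol{0}}$. Using Lemma~\ref{seksommorgen} and $B_2$ to rewrite $\overline{\boldsymbol{0}\gamma\boldsymbol{0}}$, $\overline{\boldsymbol{0}\gamma}$, $\overline{\gamma\boldsymbol{0}}$ as $0\circ\overline{\gamma}\circ 0$, $0\circ\overline{\gamma}$, $\overline{\gamma}\circ 0$, the instance of $B_8$ at $y:=\overline{\gamma}$ relates $\overline{\alpha}\sqsubseteq\overline{\boldsymbol{0}\gamma\boldsymbol{0}}$ to precisely the disjunction just refuted, so its left-to-right direction yields $B\vdash\overline{\alpha}\not\sqsubseteq\overline{\beta}$.

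The one structural difference from Lemma~\ref{billy} --- and the only point requiring care --- is that there exactly one of the three disjuncts had to be \emph{secured}, whereas here all three must be \emph{simultaneously refuted} (one via Lemma~\ref{siste}, two via the induction hypothesis) before the biconditional can be discharged.
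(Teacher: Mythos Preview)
Your proposal is correct and follows exactly the approach the paper intends: the paper's own proof simply states that the argument is symmetric to that of Lemma~\ref{billy} and omits the details, and what you have written is precisely the dualization it has in mind---induction on $|\beta|$ with the same seven-way case split, using the left-to-right direction of $B_5$--$B_{11}$ together with Lemma~\ref{siste} (and the induction hypothesis) to refute each disjunct. Your closing remark about the structural asymmetry (one disjunct secured versus all three refuted) is accurate and is exactly why Lemma~\ref{siste} is needed here but not in Lemma~\ref{billy}.
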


\begin{proof}
The proof of this lemma is symmetric to the proof of Lemma \ref{billy}, and we omit the details.
\qed
\end{proof}

\begin{lemma} \label{varmerekordikoben}
Let $\phi(x)$ be an $\mathcal{L}_{BT}$-formula such that 
\begin{align*}
\mathfrak{B} \models \phi(\overline{\alpha}) \; \Rightarrow \; B \vdash \phi(\overline{\alpha})  \tag{*}
\end{align*}
for any $\alpha\in \{ \boldsymbol{0},  \boldsymbol{1}\}^*$.
Then we have
\[ \mathfrak{B} \models (\forall x  \sqsubseteq \overline{\alpha})\phi(x)  \; \Rightarrow \; 
B \vdash (\forall x  \sqsubseteq \overline{\alpha})\phi(x)  \]
 for any  $\alpha\in \{ \boldsymbol{0},  \boldsymbol{1}\}^*$.
\end{lemma}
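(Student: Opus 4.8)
The plan is to argue by induction on the length of $\alpha$, using the axioms $B_5$ through $B_{11}$ to ``peel off'' the substring relation and reduce a claim about $\overline{\alpha}$ to claims about the two strings obtained by deleting the first or the last bit of $\alpha$. The conceptual point is that these axioms describe $\sqsubseteq$ recursively in exactly the shape needed for such an induction, while the hypothesis $(*)$ is what converts the \emph{truth} of $\phi$ at a biteral into its \emph{provability} in $B$. Throughout I reason semantically inside an arbitrary model of $B$ and recall the routine fact that $\overline{\alpha}$ evaluates to $\alpha$ in $\mathfrak{B}$, so that $\mathfrak{B}\models(\forall x\sqsubseteq\overline{\alpha})\phi(x)$ says precisely that $\mathfrak{B}\models\phi(\overline{\delta})$ for every substring $\delta$ of $\alpha$.

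First I handle the base cases $\alpha\equiv\varepsilon$, $\alpha\equiv\boldsymbol{0}$ and $\alpha\equiv\boldsymbol{1}$. Here the substrings of $\alpha$ are only $\varepsilon$ and $\alpha$ itself, so the hypothesis gives $\mathfrak{B}\models\phi(\overline{\delta})$ for each such $\delta$, and $(*)$ upgrades this to $B\vdash\phi(\overline{\delta})$. Now take an arbitrary $x$ with $x\sqsubseteq\overline{\alpha}$ in a model of $B$; by $B_5$, $B_6$ or $B_7$ respectively (together with $B^{-}\vdash\overline{\boldsymbol{0}}=0$ and $B^{-}\vdash\overline{\boldsymbol{1}}=1$, which come from $B^{-}_1$ since $\overline{\boldsymbol{0}}\equiv e\circ 0$ and $\overline{\boldsymbol{1}}\equiv e\circ 1$) the element $x$ must equal one of these finitely many biterals, and the corresponding provable instance of $\phi$ yields $\phi(x)$. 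Discharging $x$ gives $B\vdash(\forall x\sqsubseteq\overline{\alpha})\phi(x)$.

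For the inductive step $|\alpha|\geq 2$, write $\alpha\equiv a\gamma b$ with $a,b\in\{\boldsymbol{0},\boldsymbol{1}\}$; I describe the representative case $a=b=\boldsymbol{0}$, the remaining three being identical with $B_9$, $B_{10}$, $B_{11}$ replacing $B_8$. Using Lemma~\ref{seksommorgen} I record the provable identities $\overline{\alpha}=0\circ\overline{\gamma}\circ 0$, $\;0\circ\overline{\gamma}=\overline{\boldsymbol{0}\gamma}$ and $\overline{\gamma}\circ 0=\overline{\gamma\boldsymbol{0}}$, so that instantiating $B_8$ with $y:=\overline{\gamma}$ rewrites its three disjuncts as $x=\overline{\alpha}$, $x\sqsubseteq\overline{\boldsymbol{0}\gamma}$ and $x\sqsubseteq\overline{\gamma\boldsymbol{0}}$. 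Every substring of $\boldsymbol{0}\gamma$ and of $\gamma\boldsymbol{0}$ is a substring of $\alpha$, so the hypothesis descends to give $\mathfrak{B}\models(\forall x\sqsubseteq\overline{\boldsymbol{0}\gamma})\phi(x)$ and $\mathfrak{B}\models(\forall x\sqsubseteq\overline{\gamma\boldsymbol{0}})\phi(x)$ as well as $\mathfrak{B}\models\phi(\overline{\alpha})$. Since $\boldsymbol{0}\gamma$ and $\gamma\boldsymbol{0}$ are shorter than $\alpha$, the induction hypothesis yields $B\vdash(\forall x\sqsubseteq\overline{\boldsymbol{0}\gamma})\phi(x)$ and $B\vdash(\forall x\sqsubseteq\overline{\gamma\boldsymbol{0}})\phi(x)$, while $(*)$ yields $B\vdash\phi(\overline{\alpha})$. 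Reasoning in a model of $B$, any $x$ with $x\sqsubseteq\overline{\alpha}$ satisfies one of the three rewritten disjuncts by $B_8$, and in each case the matching provable fact delivers $\phi(x)$; discharging $x$ completes the step.

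I expect the only real friction to be bookkeeping rather than conceptual. The delicate points are justifying the biteral rewritings so that the \emph{constant} occurrences of $0$ and $1$ in $B_8$--$B_{11}$ line up with the biterals $\overline{\boldsymbol{0}\gamma}$ and $\overline{\gamma\boldsymbol{0}}$ (this is exactly the role of Lemma~\ref{seksommorgen}), and checking that the truth hypothesis genuinely restricts from $\alpha$ to its one-bit-shorter prefix $\boldsymbol{0}\gamma$ and suffix $\gamma\boldsymbol{0}$, so that the induction hypothesis applies. No new model-theoretic idea seems required beyond the recursive structure already built into $B_5$ through $B_{11}$.
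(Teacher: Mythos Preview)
Your proposal is correct and follows essentially the same approach as the paper: induction on $|\alpha|$ with the same case split ($\varepsilon$, single bits, and $a\gamma b$), using $B_5$--$B_{11}$ to reduce to the shorter strings $\boldsymbol{0}\gamma$ and $\gamma\boldsymbol{0}$, invoking the induction hypothesis there, and $(*)$ for $\phi(\overline{\alpha})$ itself. If anything you are slightly more careful than the paper in spelling out the biteral rewritings via Lemma~\ref{seksommorgen} so that $B_8$--$B_{11}$ apply literally.
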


\begin{proof}
We proceed by induction on the length of $\alpha$. We will consider the cases  $\alpha\equiv \varepsilon$, $\alpha\equiv \boldsymbol{0}$,
$\alpha\equiv \boldsymbol{1}$, $\alpha\equiv \boldsymbol{0}\beta \boldsymbol{0}$, $\alpha\equiv \boldsymbol{0}\beta \boldsymbol{1}$, $\alpha\equiv \boldsymbol{1}\beta \boldsymbol{0}$
 and $\alpha\equiv \boldsymbol{1}\beta \boldsymbol{1}$.

Let $\alpha \equiv \varepsilon$. Assume $\mathfrak{B} \models (\forall x  \sqsubseteq e)\phi(x)$.
Then $\mathfrak{B} \models \phi(e)$. By (*), we have $B\vdash \phi(e)$. By $B_5$, we have
$B \vdash (\forall x  \sqsubseteq e)\phi(x)$.

Let $\alpha \equiv \boldsymbol{0}$. Assume $\mathfrak{B} \models (\forall x  \sqsubseteq  \overline{\boldsymbol{0}})\phi(x)$.
Then $\mathfrak{B} \models \phi(\overline{\varepsilon}) \wedge \phi(\overline{\boldsymbol{0}})$.
By (*), we have $B\vdash \phi(\overline{\varepsilon}) \wedge \phi(\overline{\boldsymbol{0}})$.
By $B_6$, we have
$B \vdash (\forall x  \sqsubseteq \overline{\boldsymbol{0}})\phi(x)$.
The case  $\alpha \equiv \boldsymbol{1}$ is similar  to the case  $\alpha \equiv \boldsymbol{0}$.
Use  $B_7$ in place of $B_6$.

Let $\alpha \equiv \boldsymbol{0}\beta\boldsymbol{0}$. Assume
 $\mathfrak{B} \models (\forall x  \sqsubseteq \boldsymbol{0}\beta\boldsymbol{0} )\phi(x)$. 
Then
$$
 \mathfrak{B} \models \phi(\boldsymbol{0}\beta\boldsymbol{0}) \; \wedge \;   (\forall x  \sqsubseteq  \boldsymbol{0}\beta)\phi(x) \; \wedge \; 
(\forall x  \sqsubseteq   \beta\boldsymbol{0})\phi(x)\; .
$$
By (*) and the induction hypothesis, we have
$$
B \vdash \phi(\boldsymbol{0}\beta\boldsymbol{0}) \; \wedge \;   (\forall x  \sqsubseteq  \boldsymbol{0}\beta)\phi(x) \; \wedge \; 
(\forall x  \sqsubseteq   \beta\boldsymbol{0})\phi(x) \; .
$$
By $B_8$, we have $B\vdash (\forall x  \sqsubseteq \boldsymbol{0}\beta\boldsymbol{0} )\phi(x)$.

The case  $\alpha \equiv \boldsymbol{0}\beta\boldsymbol{1}$, the case  $\alpha \equiv \boldsymbol{1}\beta\boldsymbol{0}$ and 
the case  $\alpha \equiv \boldsymbol{1}\beta\boldsymbol{1}$ are handled similarly using $B_9$, $B_{10}$ and $B_{11}$, respectively, in place of $B_8$. 
\qed
\end{proof}

\begin{theorem}[$\Sigma$-completeness of $B$]\label{bsigcomp}
For any $\Sigma$-sentence $\phi$, we have
$$
 \mathfrak{B} \models \phi  \; \Rightarrow \; B\vdash \phi \; .
$$
\end{theorem}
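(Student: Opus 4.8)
The plan is to prove, by induction on the structure of $\Sigma$-formulas, the following strengthening that accommodates free variables: for every $\Sigma$-formula $\phi(x_1, \ldots, x_n)$ and all $\alpha_1, \ldots, \alpha_n \in \buni$,
$$
\mathfrak{B} \models \phi(\overline{\alpha_1}, \ldots, \overline{\alpha_n}) \;\Rightarrow\; B \vdash \phi(\overline{\alpha_1}, \ldots, \overline{\alpha_n})\; .
$$
The theorem is the special case $n = 0$. This strengthening is essential precisely so that the bounded-universal case can appeal to Lemma \ref{varmerekordikoben}, whose hypothesis (*) is exactly the instance of the inductive claim in which all but one variable has been replaced by a biteral.

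For the atomic base cases I would first invoke Lemma \ref{adagenetter} to reduce each closed term to a biteral: if $s$ and $t$ are the closed terms obtained after substituting the $\overline{\alpha_i}$, then $B^{-}\vdash s = \overline{\gamma}$ and $B^{-}\vdash t = \overline{\delta}$ for suitable $\gamma,\delta\in\buni$. For $s=t$, truth in $\mathfrak{B}$ forces $\gamma = \delta$, so $B^{-}\vdash s = t$; for $s\neq t$ we have $\gamma \neq \delta$ and Lemma \ref{siste} gives $B^{-}\vdash \overline{\gamma} \neq \overline{\delta}$. The cases $s \sqsubseteq t$ and $s \not\sqsubseteq t$ are handled the same way, now invoking Lemma \ref{billy} and Lemma \ref{bbilly} respectively after the reduction to biterals. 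The propositional cases are immediate: for a conjunction both conjuncts are true in $\mathfrak{B}$ and the induction hypothesis applies to each; for a disjunction one disjunct is true in $\mathfrak{B}$, and the induction hypothesis applied to it yields a proof of that disjunct, hence of the disjunction.

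For the quantifier cases I would treat the three forms in turn. For $(\exists x)\phi$ and $(\exists x \sqsubseteq t)\phi$ we fix a witness $\beta \in \buni$ with $\mathfrak{B} \models \phi(\overline{\beta})$ (and, in the bounded case, $\beta \sqsubseteq^\mathfrak{B} \alpha$, where $\overline{\alpha}$ is the biteral that Lemma \ref{adagenetter} supplies for the closed bound); the induction hypothesis gives $B \vdash \phi(\overline{\beta})$, Lemma \ref{billy} gives $B \vdash \overline{\beta} \sqsubseteq \overline{\alpha}$ when needed, and existential introduction closes the case. The one genuinely delicate case is the bounded universal quantifier $(\forall x \sqsubseteq t)\phi$, and this is where Lemma \ref{varmerekordikoben} does the real work. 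After substituting biterals for all variables other than $x$ and reducing the bound $t$ to a biteral $\overline{\alpha}$ by Lemma \ref{adagenetter}, the induction hypothesis applied to $\phi$ asserts exactly that $\mathfrak{B} \models \phi(\overline{\beta}) \Rightarrow B \vdash \phi(\overline{\beta})$ for every $\beta$, i.e.\ condition (*); Lemma \ref{varmerekordikoben} then converts $\mathfrak{B} \models (\forall x \sqsubseteq \overline{\alpha})\phi$ into $B \vdash (\forall x \sqsubseteq \overline{\alpha})\phi$. The main obstacle is thus entirely isolated in Lemma \ref{varmerekordikoben}, which unwinds the bound according to the recursive substring axioms $B_5$ through $B_{11}$; granting that lemma, the theorem is a routine assembly of the cases above.
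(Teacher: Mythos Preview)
Your proposal is correct and follows essentially the same route as the paper's proof: both argue by structural induction, reduce closed terms to biterals via Lemma \ref{adagenetter}, handle the atomic cases with Lemmas \ref{siste}, \ref{billy}, \ref{bbilly}, and discharge the bounded universal quantifier by Lemma \ref{varmerekordikoben}. The one presentational difference is that you explicitly strengthen the inductive claim to $\Sigma$-formulas with free variables (instantiated by biterals), whereas the paper phrases the induction over $\Sigma$-sentences and tacitly treats $\psi(\overline{\alpha})$ as a smaller sentence; your formulation is the more careful way to state the same argument.
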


\begin{proof} 
We prove the theorem by induction on the structure of the $\Sigma$-sentence $\phi$. The base cases are
 $\phi\equiv s=t$,   $\phi\equiv s\neq t$,
  $\phi\equiv s \sqsubseteq t$ and   $\phi\equiv s \not\sqsubseteq t$ (where $s$ and $t$ are variable free). 
We attend to the case $\phi\equiv s \sqsubseteq t$. 
So  assume  $\phi\equiv s \sqsubseteq t$. Furthermore, assume  $\mathfrak{B} \models s \sqsubseteq t$.
By Lemma \ref{adagenetter}, we have $\alpha,\beta \in  \{ \boldsymbol{0},  \boldsymbol{1}\}^*$ 
such that 
\begin{align*}
B\vdash s= \overline{\alpha} \; \wedge \; t= \overline{\beta}\; . \tag{*} 
\end{align*}
By the Soundness Theorem for first-order logic, we have $\alpha \sqsubseteq^{\mathfrak{B}} \beta$.
By Lemma \ref{billy}, we have $B\vdash  \overline{\alpha} \sqsubseteq \overline{\beta}$.
By (*), we have $B\vdash  s \sqsubseteq t$. This proves that the theorem holds when $\phi\equiv s \sqsubseteq t$.
The  cases  $\phi\equiv s=t$,   $\phi\equiv s\neq t$ and   $\phi\equiv s \not\sqsubseteq t$ are similar.
Use Lemma \ref{siste} in place of  Lemma \ref{billy} when $\phi\equiv s\neq t$.
Use Lemma \ref{bbilly} in place of  Lemma \ref{billy} when $\phi\equiv s \not\sqsubseteq t$.

We turn to the inductive cases. Let $\phi \equiv (\psi \wedge \xi)$. Assume $\mathfrak{B} \models \psi \wedge \xi$.
Then we  have $\mathfrak{B} \models \psi$ and $\mathfrak{B} \models \xi$. By the induction hypothesis, we have
$B\vdash  \psi$ and $B \vdash \xi$. Thus, $B\vdash  \psi \wedge \xi$. The case  $\phi \equiv (\psi \vee \xi)$ is similar.

Let $\phi \equiv (\exists x)\psi(x)$. Assume $\mathfrak{B} \models (\exists x)\psi(x)$. Then we have
$\mathfrak{B} \models \psi(\overline{\alpha})$ for some $\alpha \in  \{ \boldsymbol{0},  \boldsymbol{1}\}^*$. 
Our induction hypothesis yields $B\vdash \psi(\overline{\alpha})$, and then we also have 
 $B\vdash (\exists x)\psi(x)$. 

Let $\phi \equiv (\forall x\sqsubseteq t)\psi(x)$.
 Assume $\mathfrak{B} \models (\forall x\sqsubseteq t)\psi(x)$. By Lemma \ref{adagenetter}, we have 
$\beta \in  \{ \boldsymbol{0},  \boldsymbol{1}\}^*$ such that
\begin{align*}
B\vdash  t = \overline{\beta}  \tag{$\dagger$}
\end{align*}
By the Soundness Theorem of first-order logic, we have 
\begin{align*}
\mathfrak{B} \models (\forall x\sqsubseteq  \overline{\beta})\psi(x) \tag{$\ddagger$}
\end{align*}
Our induction hypothesis yields
\begin{align*}
\mathfrak{B} \models \psi(\overline{\alpha})  \;\; \Rightarrow B\vdash \psi(\overline{\alpha})   \tag{IH}
\end{align*}
for all $\alpha \in  \{ \boldsymbol{0},  \boldsymbol{1}\}^*$. 
By (IH), ($\ddagger$) and Lemma \ref{varmerekordikoben}, we have $B\vdash (\forall x\sqsubseteq  \overline{\beta})\psi(x)$.
Finally, by  ($\dagger$), we have $B\vdash (\forall x\sqsubseteq  t)\psi(x)$. This completes the case where $\phi$ is of the form 
$(\forall x\sqsubseteq t)\psi(x)$. We leave the case $\phi \equiv (\exists x\sqsubseteq t)\psi(x)$ to the reader.
\qed
\end{proof}

\subsection{The Structure $\mathfrak{D}$}

\begin{definition}
The first-order theory $D$ contains the following seven non-logical axioms:
\begin{enumerate} 
\item[-] the first four axioms are the axioms of $B^{-}$
\item[5.] $\forall x[ \ x \preceq e \leftrightarrow x=e \ ]$
\item[6.] $ \forall x y [ \  x \preceq y  0 \leftrightarrow 
( x = y  0 \vee x \preceq y) \ ]$ 
\item[7.] $\forall x y [ \  x \preceq y  1  \leftrightarrow 
(x = y  1 \vee x \preceq y) \ ]$
\end{enumerate}
We will use $D_i$ to refer to the $i^{\mbox{{\scriptsize th}}}$ axiom of $D$.
\end{definition}

\begin{lemma}\label{dbilly}
Let $\alpha,\beta\in \{ \boldsymbol{0},  \boldsymbol{1}\}^*$ and $\alpha \preceq^\mathfrak{D} \beta$ (i.e. $\alpha$ is a prefix
of $\beta$). Then
$D\vdash \overline{\alpha}  \preceq \overline{\beta}$. 
\end{lemma}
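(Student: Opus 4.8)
The plan is to mirror the proof of Lemma \ref{billy}, proceeding by induction on the length of $\beta$, but to exploit the fact that the prefix axioms $D_5$, $D_6$, $D_7$ decompose $\overline{\beta}$ according to its \emph{last} bit only. Consequently the case analysis should be considerably simpler than in Lemma \ref{billy}: instead of the seven cases keyed to both the first and the last bit of $\beta$, I would only need the three cases $\beta \equiv \varepsilon$, $\beta \equiv \gamma\boldsymbol{0}$ and $\beta \equiv \gamma\boldsymbol{1}$, where $\gamma \in \buni$.

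For the base case $\beta \equiv \varepsilon$ I would argue as follows. Since $\alpha$ is a prefix of the empty string, we must have $\alpha \equiv \varepsilon$, so $\overline{\alpha} \equiv \overline{\beta} \equiv e$. Instantiating $D_5$ with $x := e$ and using the logical validity of $e = e$ then gives $D \vdash e \preceq e$, which is exactly $D \vdash \overline{\alpha} \preceq \overline{\beta}$.

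In the inductive case $\beta \equiv \gamma\boldsymbol{0}$ I would first record the biteral identity $\overline{\beta} \equiv \overline{\gamma} \circ 0$ (immediate from the definition of biterals), so that the instances of $D_6$ line up with the terms $\overline{\alpha}$ and $\overline{\beta}$. Since $\alpha$ is a prefix of $\gamma\boldsymbol{0}$, either $\alpha \equiv \gamma\boldsymbol{0}$ or $\alpha$ is a prefix of $\gamma$. In the first subcase $\overline{\alpha} \equiv \overline{\beta}$, so the left disjunct of the instance of $D_6$ with $x := \overline{\gamma}\circ 0$ and $y := \overline{\gamma}$ is provable and yields $D \vdash \overline{\alpha} \preceq \overline{\beta}$. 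In the second subcase the induction hypothesis (applicable because $|\gamma| < |\beta|$) gives $D \vdash \overline{\alpha} \preceq \overline{\gamma}$, so the right disjunct of the instance of $D_6$ with $x := \overline{\alpha}$ and $y := \overline{\gamma}$ holds and again yields $D \vdash \overline{\alpha} \preceq \overline{\beta}$. The case $\beta \equiv \gamma\boldsymbol{1}$ would be handled identically, using $D_7$ in place of $D_6$.

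I do not expect a genuine obstacle here; the two points that require a little care are recording the biteral identity $\overline{\gamma\boldsymbol{0}} \equiv \overline{\gamma}\circ 0$ explicitly so that the $D_6$/$D_7$ instances match the terms in play, and verifying the elementary combinatorial fact that a prefix of $\gamma\boldsymbol{0}$ of length strictly less than $|\gamma\boldsymbol{0}|$ is already a prefix of $\gamma$, which is what justifies the split into the two subcases and the appeal to the induction hypothesis.
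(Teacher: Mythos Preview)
Your proposal is correct and takes essentially the same approach as the paper: the paper states that the proof is symmetric to that of Lemma~\ref{dbbilly}, which proceeds by induction on the length of $\beta$ with the three cases $\beta\equiv\varepsilon$, $\beta\equiv\gamma\boldsymbol{0}$, $\beta\equiv\gamma\boldsymbol{1}$ and uses $D_5$, $D_6$, $D_7$ respectively. Your case split, use of the induction hypothesis, and appeals to $D_5$--$D_7$ match this exactly.
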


\begin{proof} 
This proof is symmetric to the proof of the next lemma. We omit the details.
\qed
\end{proof}

\begin{lemma}\label{dbbilly}
Let $\alpha,\beta\in \{ \boldsymbol{0},  \boldsymbol{1}\}^*$ 
and $\alpha \not\preceq^\mathfrak{D} \beta$ (i.e. $\alpha$ is a not prefix
of $\beta$). Then
$D\vdash \overline{\alpha}  \not\preceq \overline{\beta}$.
\end{lemma}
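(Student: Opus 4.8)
The plan is to prove Lemma \ref{dbbilly} by induction on the length of $\beta$, mirroring the structure of Lemma \ref{bbilly} but using the prefix axioms $D_5$, $D_6$, $D_7$ in place of the substring axioms. The key observation is that the prefix case is actually simpler than the substring case: a single bit removed from the right of $\beta$ reduces it, and there is only one ``recursive direction'' (namely $\alpha \preceq \beta$ after stripping the last bit of $\beta$), as opposed to the two directions present in $B_8$--$B_{11}$. So I would set up the cases $\beta \equiv \varepsilon$, $\beta \equiv \gamma\boldsymbol{0}$ and $\beta \equiv \gamma\boldsymbol{1}$, corresponding to which axiom governs the top-level biconditional.

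For the base case $\beta \equiv \varepsilon$, assume $\alpha \not\preceq^\mathfrak{D} \varepsilon$, which forces $\alpha \neq \varepsilon$. By Lemma \ref{siste} we have $D \vdash \overline{\alpha} \neq e$, and then the right-to-left direction of $D_5$ (reasoning in an arbitrary model: if $\overline{\alpha} \preceq e$ then $\overline{\alpha} = e$, a contradiction) gives $D \vdash \overline{\alpha} \not\preceq e$. For the inductive case $\beta \equiv \gamma\boldsymbol{0}$, I would assume $\alpha \not\preceq^\mathfrak{D} \gamma\boldsymbol{0}$. Semantically, $\alpha$ being a prefix of $\gamma\boldsymbol{0}$ is equivalent to $\alpha = \gamma\boldsymbol{0}$ or $\alpha$ being a prefix of $\gamma$; so the negation gives that $\alpha \neq \gamma\boldsymbol{0}$ \emph{and} $\alpha \not\preceq^\mathfrak{D} \gamma$. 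By Lemma \ref{siste}, $D \vdash \overline{\alpha} \neq \overline{\gamma\boldsymbol{0}}$, and by the induction hypothesis, $D \vdash \overline{\alpha} \not\preceq \overline{\gamma}$. Working in an arbitrary model, the right-to-left direction of $D_6$ says that $\overline{\alpha} \preceq \overline{\gamma} \circ 0$ implies $\overline{\alpha} = \overline{\gamma}\circ 0$ or $\overline{\alpha} \preceq \overline{\gamma}$; both disjuncts contradict what we have derived, so $\overline{\alpha} \not\preceq \overline{\gamma}\circ 0$, i.e. $D \vdash \overline{\alpha} \not\preceq \overline{\beta}$ since $\overline{\beta} \equiv \overline{\gamma}\circ 0$. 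The case $\beta \equiv \gamma\boldsymbol{1}$ is identical with $D_7$ replacing $D_6$.

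The main thing to get right is the semantic decomposition of non-prefixhood, since that is what licenses splitting the hypothesis into the two pieces $\alpha \neq \gamma b$ and $\alpha \not\preceq^\mathfrak{D}\gamma$. This rests on the elementary fact that, over the standard alphabet, $\alpha$ is a prefix of $\gamma b$ if and only if $\alpha = \gamma b$ or $\alpha$ is a prefix of $\gamma$ — a string equation argument in $\mathfrak{D}$ that parallels exactly the shape of the axiom $D_6$. This is the only genuine content; once it is in place, the derivation is a routine propositional manipulation of the right-to-left direction of the relevant axiom together with Lemma \ref{siste} and the induction hypothesis. I expect no real obstacle, as the prefix relation lacks the ``two overlapping suffixes'' complication that makes the substring axioms branch; the proof is strictly shorter than that of Lemma \ref{bbilly}, which is presumably why the authors chose to prove this direction in full and leave Lemma \ref{dbilly} as the symmetric omitted case.
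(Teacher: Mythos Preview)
Your proposal is correct and follows essentially the same argument as the paper: induction on the length of $\beta$ with cases $\varepsilon$, $\gamma\boldsymbol{0}$, $\gamma\boldsymbol{1}$, using Lemma~\ref{siste} together with $D_5$, $D_6$, $D_7$ respectively. One trivial slip: what you call the ``right-to-left direction'' of $D_5$ and $D_6$ is in fact the left-to-right direction (e.g.\ $x \preceq e \rightarrow x = e$), but the reasoning itself is sound.
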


\begin{proof} 
We prove the lemma by induction on the length of $\beta$, and we consider the cases  $\beta\equiv \varepsilon$, $\beta\equiv \gamma\boldsymbol{0}$ and  $\beta\equiv \gamma\boldsymbol{1}$.

Assume $\beta$ is empty. Then  $\alpha$ is not empty. By Lemma \ref{siste}, we have $D\vdash \overline{\alpha} \neq e$. By axiom $D_5$, we have 
$D\vdash \overline{\alpha} \not\preceq e$. Thus,  $D\vdash \overline{\alpha} \not\preceq \overline{\beta}$.
Assume $\beta\equiv \gamma\boldsymbol{0}$. Then we have $\alpha \neq \gamma\boldsymbol{0}$ and $\alpha \not\preceq^\mathfrak{D} \gamma$.
By our induction hypothesis, we have $D\vdash \overline{\alpha}  \not\preceq \overline{\gamma}$.  By Lemma \ref{siste},
we have $D\vdash \overline{\alpha} \neq \overline{\gamma\boldsymbol{0}}$. By axiom $D_6$, we have $D\vdash \overline{\alpha} \not\preceq \overline{\gamma\boldsymbol{0}}$.
The case where $\beta$ is of the form $\gamma\boldsymbol{1}$ is similar. Apply axiom $D_7$ in place of $D_6$.
\qed
\end{proof}

\begin{lemma} \label{dvarmerekordikoben}
Let $\phi(x)$ be an $\mathcal{L}_{BT}$-formula such that 
\begin{align*}
\mathfrak{D} \models \phi(\overline{\alpha}) \; \Rightarrow \; D \vdash \phi(\overline{\alpha})  \tag{*}
\end{align*}
for any $\alpha\in \{ \boldsymbol{0},  \boldsymbol{1}\}^*$.
Then we have
\[ \mathfrak{D} \models (\forall x  \preceq \overline{\alpha})\phi(x)  \; \Rightarrow \; 
D \vdash (\forall x  \preceq \overline{\alpha})\phi(x)  \]
 for any  $\alpha\in \{ \boldsymbol{0},  \boldsymbol{1}\}^*$.
\end{lemma}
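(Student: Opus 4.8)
The plan is to mimic the proof of Lemma \ref{varmerekordikoben} almost verbatim, exploiting that the prefix axioms $D_5$, $D_6$, $D_7$ have the same shape as the substring axioms $B_5$--$B_{11}$: each characterizes the elements $\preceq$ a given string as a finite disjunction. The crucial simplification is that the prefix relation recurses only on the \emph{last} bit of the bounding string, so instead of the seven cases needed for $\mathfrak{B}$ I would need only three, namely $\alpha \equiv \varepsilon$, $\alpha \equiv \gamma\boldsymbol{0}$ and $\alpha \equiv \gamma\boldsymbol{1}$. I would proceed by induction on the length of $\alpha$.

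For the base case $\alpha \equiv \varepsilon$, I would observe that in $\mathfrak{D}$ the only prefix of $\varepsilon$ is $\varepsilon$ itself, so $\mathfrak{D} \models (\forall x \preceq e)\phi(x)$ gives $\mathfrak{D} \models \phi(e)$; then (*) yields $D \vdash \phi(e)$, and $D_5$ (which says $x \preceq e \leftrightarrow x = e$) lets me conclude $D \vdash (\forall x \preceq e)\phi(x)$.

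For the inductive step $\alpha \equiv \gamma\boldsymbol{0}$, I would note that the prefixes of $\gamma\boldsymbol{0}$ are exactly $\gamma\boldsymbol{0}$ together with the prefixes of $\gamma$, so $\mathfrak{D} \models (\forall x \preceq \overline{\gamma\boldsymbol{0}})\phi(x)$ unpacks into $\mathfrak{D} \models \phi(\overline{\gamma\boldsymbol{0}})$ and $\mathfrak{D} \models (\forall x \preceq \overline{\gamma})\phi(x)$. Applying (*) to the first and the induction hypothesis to the second gives $D \vdash \phi(\overline{\gamma\boldsymbol{0}})$ and $D \vdash (\forall x \preceq \overline{\gamma})\phi(x)$. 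Recalling $\overline{\gamma\boldsymbol{0}} \equiv \overline{\gamma}\circ 0$, I would then reason inside an arbitrary model of $D$: given $x \preceq \overline{\gamma}\circ 0$, axiom $D_6$ forces either $x = \overline{\gamma}\circ 0$ or $x \preceq \overline{\gamma}$; in the former case $\phi(x)$ follows from $D \vdash \phi(\overline{\gamma\boldsymbol{0}})$, in the latter from $D \vdash (\forall x \preceq \overline{\gamma})\phi(x)$. Hence $D \vdash (\forall x \preceq \overline{\gamma\boldsymbol{0}})\phi(x)$. The case $\alpha \equiv \gamma\boldsymbol{1}$ is identical with $D_7$ replacing $D_6$.

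I do not expect any genuine obstacle; the proof is a routine adaptation of the $\mathfrak{B}$ case, made easier by the reduced number of cases. The only point requiring a little care is the semantic claim that the prefixes of $\gamma\boldsymbol{0}$ are precisely $\gamma\boldsymbol{0}$ and the prefixes of $\gamma$, which is what makes the disjunction in $D_6$ line up with the case split, together with the matching observation that $\mathfrak{D} \models (\forall x \preceq \overline{\gamma\boldsymbol{0}})\phi(x)$ really does decompose into the two semantic statements above. This decomposition uses the Soundness Theorem for first-order logic exactly as in the proof of Lemma \ref{varmerekordikoben}.
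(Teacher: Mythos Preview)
Your proposal is correct and matches the paper's own proof essentially verbatim: induction on $|\alpha|$ with the three cases $\alpha\equiv\varepsilon$, $\alpha\equiv\gamma\boldsymbol{0}$, $\alpha\equiv\gamma\boldsymbol{1}$, handled via $D_5$, $D_6$, $D_7$ respectively. The only quibble is that the semantic decomposition in the inductive step does not actually require the Soundness Theorem---it is a direct fact about prefixes in $\mathfrak{D}$---but this does not affect the argument.
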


\begin{proof} 
We prove the lemma by induction on the length of $\alpha$, and  we consider the cases   $\alpha\equiv \varepsilon$, $\alpha\equiv \beta\boldsymbol{0}$ and  $\alpha\equiv \beta\boldsymbol{1}$.

Let $\alpha\equiv \varepsilon$. Assume $\mathfrak{D} \models (\forall x  \preceq e)\phi(x)$. Then we have $\mathfrak{D} \models \phi(e)$.
By (*), we have $D\vdash \phi(e)$. By $D_5$, we have $D\vdash (\forall x  \preceq e)\phi(x)$. 

Let $\alpha\equiv \beta\boldsymbol{0}$. Assume $\mathfrak{D} \models (\forall x  \preceq \overline{\beta\boldsymbol{0}})\phi(x)$.
Then we have $\mathfrak{D} \models (\forall x  \preceq \overline{\beta})\phi(x)$ and $\mathfrak{D} \models \phi(\overline{\beta\boldsymbol{0}})$.
By our induction hypothesis we have $D\vdash (\forall x  \preceq \overline{\beta})\phi(x)$. By (*), we have $D\vdash \phi(\overline{\beta\boldsymbol{0}})$.
By axiom $D_6$ we have $D \vdash (\forall x  \preceq \overline{\beta\boldsymbol{0}})\phi(x)$. 

The case $\alpha\equiv \beta\boldsymbol{1}$ is similar to the preceding case. Use $D_7$ in place of $D_6$.
 \qed
\end{proof}

\begin{theorem}[$\Sigma$-completeness of $D$]\label{dsigcomp}
For any $\Sigma$-sentence $\phi$, we have
$$
  \mathfrak{D} \models \phi \; \Rightarrow \;  D\vdash \phi\; .
$$
\end{theorem}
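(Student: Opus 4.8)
The plan is to mirror the proof of Theorem \ref{bsigcomp} step for step, replacing $\mathfrak{B}$ by $\mathfrak{D}$, the theory $B$ by $D$, the substring symbol $\sqsubseteq$ by the prefix symbol $\preceq$, and the substring lemmas \ref{billy}, \ref{bbilly}, \ref{varmerekordikoben} by their prefix counterparts \ref{dbilly}, \ref{dbbilly}, \ref{dvarmerekordikoben}. We argue by induction on the structure of the $\Sigma$-sentence $\phi$. The base cases are $\phi \equiv s = t$, $\phi \equiv s \neq t$, $\phi \equiv s \preceq t$ and $\phi \equiv s \not\preceq t$ with $s,t$ variable-free. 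For each I would first apply Lemma \ref{adagenetter} (available since $D$ extends $B^{-}$) to obtain $\alpha, \beta \in \{ \boldsymbol{0}, \boldsymbol{1}\}^*$ with $D \vdash s = \overline{\alpha}$ and $D \vdash t = \overline{\beta}$, then use the Soundness Theorem to transfer the assumed truth in $\mathfrak{D}$ into the corresponding relation between $\alpha$ and $\beta$, and finally close the case with Lemma \ref{siste} (equalities), Lemma \ref{dbilly} (the case $s \preceq t$) or Lemma \ref{dbbilly} (the case $s \not\preceq t$).

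The inductive cases for $\wedge$, $\vee$ and the unbounded existential quantifier follow immediately from the induction hypothesis, just as in Theorem \ref{bsigcomp}. The only case carrying genuine content is the bounded universal quantifier $\phi \equiv (\forall x \preceq t)\psi(x)$. Here I would again use Lemma \ref{adagenetter} to find $\beta$ with $D \vdash t = \overline{\beta}$, invoke soundness to obtain $\mathfrak{D} \models (\forall x \preceq \overline{\beta})\psi(x)$, and then combine this with the induction hypothesis (in the form $\mathfrak{D} \models \psi(\overline{\alpha}) \Rightarrow D \vdash \psi(\overline{\alpha})$ for every $\alpha$) by feeding both into Lemma \ref{dvarmerekordikoben}, which yields $D \vdash (\forall x \preceq \overline{\beta})\psi(x)$ and hence $D \vdash (\forall x \preceq t)\psi(x)$. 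The bounded existential case can be left to the reader, exactly as it is in Theorem \ref{bsigcomp}.

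There is no serious obstacle, since every nontrivial step has already been packaged into a lemma: Lemma \ref{dvarmerekordikoben} absorbs the inductive content of the bounded universal quantifier, and its own induction over the length of the bound is in fact simpler than in the substring case, splitting into the three cases $\varepsilon$, $\gamma\boldsymbol{0}$ and $\gamma\boldsymbol{1}$ rather than seven. The entire argument thus reduces to verifying that the prefix axioms $D_5$, $D_6$, $D_7$ discharge the role played by $B_5$ through $B_{11}$ in the substring development, while Lemmas \ref{adagenetter} and \ref{siste}, both proved already for $B^{-}$ and hence available in $D$, carry over unchanged.
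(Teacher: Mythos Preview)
Your proposal is correct and matches the paper's own proof essentially verbatim: the paper simply says to repeat the proof of Theorem~\ref{bsigcomp}, using Lemmas \ref{dbilly}, \ref{dbbilly}, \ref{dvarmerekordikoben} in place of Lemmas \ref{billy}, \ref{bbilly}, \ref{varmerekordikoben}. Your write-up is in fact more detailed than the paper's, but the argument is the same.
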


\begin{proof} 
Given the lemmas above, we can more or less just repeat the proof of Theorem \ref{bsigcomp}.
Use Lemma \ref{dbilly} in place of Lemma \ref{billy}, Lemma \ref{dbbilly} in place of Lemma \ref{bbilly}
and Lemma \ref{dvarmerekordikoben} in place of Lemma \ref{varmerekordikoben}.
\qed
\end{proof}

\subsection{The Structure $\mathfrak{F}$}

\begin{definition}
The first-order theory $F$ contains the following eleven non-logical axioms:
\begin{enumerate} 
\item[-] the first four axioms are the  axioms of $B^{-}$
\item[5.] $\forall x[ \ e \sss x  \ ]$
\item[6.] $\forall x[ \ x \sss e \rightarrow x=e  \ ]$
\item[7.] $\forall x y [ \  x0 \sss y0   \leftrightarrow x\sss y \ ]$
\item[8.] $\forall x y [ \  x0 \sss y1   \leftrightarrow x\sss y \ ]$
\item[9.] $\forall x y [ \  x1 \sss y0   \leftrightarrow x\sss y \ ]$
\item[10.] $\forall x y [ \  x1 \sss y1   \leftrightarrow x\sss y \ ]$.
\item[11.] $\forall x [ \  x= e \; \vee \;  \exists y [ \ x=y0 \, \vee \, x=y1  \ ] \ ]$.
\end{enumerate}
We will use $F_i$ to refer to the $i^{\mbox{{\scriptsize th}}}$ axiom of $F$.
\end{definition}

\begin{lemma}\label{fbilly}
Let $\alpha,\beta\in \{ \boldsymbol{0},  \boldsymbol{1}\}^*$ and $\alpha \sss^\mathfrak{F} \beta$ (i.e.
 $|\alpha| \leq |\beta|$). Then $F\vdash \overline{\alpha}  \sss \overline{\beta}$. 
\end{lemma}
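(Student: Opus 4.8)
The plan is to prove the lemma by induction on the length of $\alpha$, splitting into the cases $\alpha \equiv \varepsilon$, $\alpha \equiv \alpha'\boldsymbol{0}$ and $\alpha \equiv \alpha'\boldsymbol{1}$. This parallels the earlier $\Sigma$-completeness lemmas, but because the length ordering behaves so uniformly, inducting on $\alpha$ rather than on $\beta$ is preferable here: it makes the base case independent of $\beta$ and leaves only a single inductive step with a four-way split on the trailing bits.

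For the base case $\alpha \equiv \varepsilon$ we have $\overline{\alpha} \equiv e$, and axiom $F_5$ gives $F \vdash e \sss \overline{\beta}$ for every $\beta \in \buni$; hence $F \vdash \overline{\alpha} \sss \overline{\beta}$ no matter what $\beta$ is. (In particular the hypothesis $|\alpha| \leq |\beta|$ is not even used in this case.)

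For the inductive step, suppose $\alpha$ is nonempty, say $\alpha \equiv \alpha' a$ with $a \in \{\boldsymbol{0},\boldsymbol{1}\}$. The hypothesis $|\alpha| \leq |\beta|$ together with $|\alpha| \geq 1$ forces $\beta$ to be nonempty as well, so $\beta \equiv \beta' b$ with $b \in \{\boldsymbol{0},\boldsymbol{1}\}$, and then $|\alpha'| = |\alpha| - 1 \leq |\beta| - 1 = |\beta'|$, i.e.\ $\alpha' \sss^{\mathfrak{F}} \beta'$. By the induction hypothesis, $F \vdash \overline{\alpha'} \sss \overline{\beta'}$. Recalling from the definition of biterals that $\overline{\alpha' a} \equiv \overline{\alpha'} \circ a$ and $\overline{\beta' b} \equiv \overline{\beta'} \circ b$, I then apply whichever of the axioms $F_7, F_8, F_9, F_{10}$ matches the pair $(a,b)$: each is a biconditional, and its right-to-left direction converts $\overline{\alpha'} \sss \overline{\beta'}$ into $\overline{\alpha'}\circ a \sss \overline{\beta'}\circ b$, which is exactly $\overline{\alpha} \sss \overline{\beta}$.

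I do not expect any real obstacle; the only bookkeeping is to observe that a nonempty $\alpha$ together with $|\alpha| \leq |\beta|$ forces a nonempty $\beta$ with $|\alpha'|\leq|\beta'|$, and then to select the correct axiom among $F_7$–$F_{10}$ according to the last bits of $\alpha$ and $\beta$. Note that axioms $F_6$ and $F_{11}$ play no role in this direction; they are presumably needed for the dual statement that $\alpha \not\sss^{\mathfrak{F}} \beta$ implies $F \vdash \overline{\alpha} \not\sss \overline{\beta}$, where one must use that $\beta$ is either $e$ or ends in a bit ($F_{11}$) and that nothing proper is shorter than $e$ ($F_6$).
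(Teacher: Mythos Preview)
Your proof is correct and follows essentially the same approach as the paper's own proof: induction on the length of $\alpha$, with the base case handled by $F_5$ and the inductive step by peeling off the last bit of both $\alpha$ and $\beta$ and invoking the appropriate axiom among $F_7$--$F_{10}$. Your side remark about $F_{11}$ being needed for the dual lemma is slightly off---the paper's proof of that direction uses only $F_6$, Lemma~\ref{gammelbtre}, and $F_7$--$F_{10}$---but this does not affect the proof of the present lemma.
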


\begin{proof} 
We prove this lemma by induction on the length of $\alpha$, and we consider the cases
$\alpha\equiv \varepsilon$, $\alpha\equiv \alpha' \boldsymbol{0}$ and $\alpha\equiv \alpha' \boldsymbol{1}$.

If $\alpha\equiv \varepsilon$, we have 
$F\vdash \overline{\alpha}\sss \overline{\beta}$ by $F_5$. Let $\alpha\equiv \alpha' \boldsymbol{0}$.
Since $|\alpha|\leq |\beta|$, we have $b\in \{ \boldsymbol{0},  \boldsymbol{1}\}$ and $\beta'\in \{ \boldsymbol{0},  \boldsymbol{1}\}^*$
such that $\beta = \beta'b$. The induction hypothesis yields $F\vdash  \overline{\alpha' }\sss \overline{\beta '}$. If $b= \boldsymbol{0}$,
we have $F\vdash \overline{\alpha}\sss \overline{\beta}$ by $F_7$.  If $b= \boldsymbol{1}$,
we have $F\vdash \overline{\alpha}\sss \overline{\beta}$ by $F_8$. This proves the case  $\alpha\equiv\alpha' \boldsymbol{0}$.
The proof when  $\alpha\equiv\alpha' \boldsymbol{1}$ is similar. Use $F_9$ and $F_{10}$, respectively,  in place of 
$F_7$ and $F_{8}$.
\qed
\end{proof}

\begin{lemma}\label{fbbilly}
Let $\alpha,\beta\in \{ \boldsymbol{0},  \boldsymbol{1}\}^*$ 
and $\alpha \not\sss^\mathfrak{F} \beta$  (i.e.
 $|\alpha| > |\beta|$). Then
$F\vdash \overline{\alpha}  \not\sss \overline{\beta}$.
\end{lemma}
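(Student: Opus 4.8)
The plan is to prove Lemma~\ref{fbbilly} by induction on the length of $\beta$, mirroring the structure of the earlier symmetric lemmas (such as Lemma~\ref{dbbilly}) but adapting the case split to the axioms $F_5$ through $F_{11}$. The hypothesis is $\alpha \not\sss^\mathfrak{F} \beta$, which means $|\alpha| > |\beta|$; in particular $\alpha$ is nonempty throughout. I would consider the cases $\beta \equiv \varepsilon$, $\beta \equiv \gamma\boldsymbol{0}$ and $\beta \equiv \gamma\boldsymbol{1}$, paralleling the induction in Lemma~\ref{dbbilly}.

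In the base case $\beta \equiv \varepsilon$, since $|\alpha| > 0$ the string $\alpha$ is nonempty, so $\alpha \equiv \alpha' a$ for some bit $a$. I would use axiom $F_{11}$ together with Lemma~\ref{adagenetter} to see that $\overline{\alpha}$ provably has the form $\overline{\alpha'} \circ a$, and then invoke axiom $F_6$, which states $\forall x[\, x \sss e \rightarrow x = e\,]$. The point is that $\overline{\alpha} \neq e$ is provable (by Lemma~\ref{siste}, since $\alpha \neq \varepsilon$), so by the contrapositive of $F_6$ we obtain $F \vdash \overline{\alpha} \not\sss e$, i.e.\ $F \vdash \overline{\alpha} \not\sss \overline{\beta}$.

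For the inductive step, suppose $\beta \equiv \gamma b$ with $b \in \{\boldsymbol{0},\boldsymbol{1}\}$. Since $|\alpha| > |\beta| \geq 1$, the string $\alpha$ is nonempty, so $\alpha \equiv \alpha' a$ for some bit $a$, and moreover $|\alpha'| > |\gamma|$, i.e.\ $\alpha' \not\sss^\mathfrak{F} \gamma$. The induction hypothesis gives $F \vdash \overline{\alpha'} \not\sss \overline{\gamma}$. Now the relevant one of the four axioms $F_7,F_8,F_9,F_{10}$ (selected by the pair of bits $a,b$) is a biconditional $\overline{\alpha'}a \sss \overline{\gamma}b \leftrightarrow \overline{\alpha'} \sss \overline{\gamma}$; reading it from right to left in contrapositive form, $\overline{\alpha'} \not\sss \overline{\gamma}$ yields $\overline{\alpha'}a \not\sss \overline{\gamma}b$. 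Since $\overline{\alpha} \equiv \overline{\alpha'} \circ a$ and $\overline{\beta} \equiv \overline{\gamma} \circ b$ by the definition of biterals, this gives $F \vdash \overline{\alpha} \not\sss \overline{\beta}$, completing the case.

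The only step requiring any care is the base case, where the other inductions for $\mathfrak{B}$ and $\mathfrak{D}$ could lean on axioms of the form $x \sqsubseteq e \leftrightarrow x = e$; here $F_6$ supplies only one direction of that biconditional, so I must explicitly verify $\overline{\alpha} \neq e$ via Lemma~\ref{siste} before applying the contrapositive. I do not expect this to be a genuine obstacle, merely a point where the axiomatization of $F$ is slightly leaner than that of $\mathfrak{B}$ or $\mathfrak{D}$. The inductive cases are entirely routine once the correct axiom among $F_7$ through $F_{10}$ is matched to the bit pattern, exactly as in Lemma~\ref{fbilly}.
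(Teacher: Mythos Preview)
Your proposal is correct and follows essentially the same route as the paper: induction on $|\beta|$, with the base case handled by showing $\overline{\alpha}\neq e$ and invoking the contrapositive of $F_6$, and the inductive step handled by peeling a bit off each of $\alpha$ and $\beta$ and applying the appropriate one of $F_7$--$F_{10}$. One small remark: the appeal to $F_{11}$ (and Lemma~\ref{adagenetter}) in the base case is unnecessary, since $\overline{\alpha}\equiv\overline{\alpha'}\circ a$ holds by the very definition of biterals; the paper simply uses Lemma~\ref{gammelbtre} (your use of Lemma~\ref{siste} works equally well) to obtain $F\vdash\overline{\alpha}\neq e$ and then applies $F_6$.
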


\begin{proof} 
We prove this lemma  by induction on the length of $\beta$, and we consider the cases $\beta\equiv \varepsilon$,
$\beta\equiv \beta'\boldsymbol{0}$ and $\beta\equiv \beta'\boldsymbol{1}$.

Assume $\beta\equiv \varepsilon$.
Since $|\alpha|\not\leq |\beta|$, we have  $\alpha'\in \{ \boldsymbol{0},  \boldsymbol{1}\}^*$
such that $\alpha = \alpha'\boldsymbol{0}$ or  $\alpha = \alpha '\boldsymbol{1}$. We can w.l.o.g.\
say that $\alpha = \alpha'\boldsymbol{0}$. By Lemma \ref{gammelbtre}, we have $F\vdash \overline{\alpha}\neq e$.
By $F_6$, we have $F\vdash \overline{\alpha}\not\sss \overline{\beta}$.
Assume $\beta\equiv \beta'\boldsymbol{0}$. Since $|\alpha|\not\leq |\beta|$, we have 
$b\in \{ \boldsymbol{0},  \boldsymbol{1}\}$ and $\alpha'\in \{ \boldsymbol{0},  \boldsymbol{1}\}^*$
such that $\alpha \equiv \alpha'b$. By our induction hypothesis, we have $F\vdash \overline{\alpha'}\not\sss \overline{\beta'}$.
 If $b= \boldsymbol{0}$,
we have $F\vdash \overline{\alpha}\not\sss \overline{\beta}$ by $F_7$.  If $b= \boldsymbol{1}$,
we have $F\vdash \overline{\alpha}\not\sss \overline{\beta}$ by $F_9$. The case $\beta\equiv \beta'\boldsymbol{1}$ is symmetric
to the case $\beta\equiv \beta'\boldsymbol{0}$. Use $F_8$ in place of $F_7$, and use $F_{10}$ in place of $F_9$.
\qed
\end{proof}

\newcommand{\bsint}[1]{ [ \varepsilon \ldots   #1 ]}

It is convenient to introduce some new notation before we state our next lemma:
For any $\alpha \in \buni$, let 
$$\bsint{\alpha} = \{ \ \beta \mid \mbox{$\beta\in \buni$ and $\beta \sss^{\mathfrak{F}} \alpha$} \}\; .$$

\begin{lemma}\label{fcases}
We have
$$
F\vdash \forall x [ \ x\sss \overline{\alpha} \; \rightarrow \bigvee_{\beta\in \bsint{\alpha}} x= \overline{\beta} \ ]
$$
for any  $\alpha\in \buni$.
\end{lemma}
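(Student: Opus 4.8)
The plan is to prove the statement by induction on the length of $\alpha$, with the induction tracking the shape of the axioms $F_5$--$F_{11}$. Throughout I reason inside an arbitrary model of $F$, fix an element $x$, assume $x \sss \overline{\alpha}$, and aim to derive a single disjunct $x = \overline{\beta}$ with $\beta \in \bsint{\alpha}$; exhibiting one such $\beta$ suffices, since it already makes the finite disjunction $\bigvee_{\beta\in\bsint{\alpha}} x=\overline{\beta}$ true. Note that $\bsint{\alpha}$ is finite, containing exactly the strings of length at most $|\alpha|$, so this disjunction is a genuine $\mathcal{L}_{BT}$-formula. For the base case $\alpha \equiv \varepsilon$ we have $\overline{\alpha} \equiv e$ and $\bsint{\varepsilon} = \{\varepsilon\}$, so the claim reduces to $F \vdash \forall x[\, x \sss e \to x = e\,]$, which is precisely axiom $F_6$.

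For the inductive step I split on the last bit of $\alpha$, writing $\alpha \equiv \gamma\boldsymbol{0}$ or $\alpha \equiv \gamma\boldsymbol{1}$, and treat $\alpha \equiv \gamma\boldsymbol{0}$ in detail. Here $\overline{\alpha} \equiv \overline{\gamma} \circ 0$ by the definition of biterals, which is exactly what lets the axioms $F_7$--$F_{10}$ apply syntactically. Assuming $x \sss \overline{\gamma}\circ 0$, I first invoke axiom $F_{11}$ to split into three cases: $x = e$, or $x = y0$ for some $y$, or $x = y1$ for some $y$. If $x = e$, then $x = \overline{\varepsilon}$ and $\varepsilon \in \bsint{\gamma\boldsymbol{0}}$, so we are done. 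If $x = y0$, then $y0 \sss \overline{\gamma}\circ 0$, and axiom $F_7$ yields $y \sss \overline{\gamma}$; the induction hypothesis then provides some $\beta \in \bsint{\gamma}$ with $y = \overline{\beta}$, whence $x = \overline{\beta}\circ 0 \equiv \overline{\beta\boldsymbol{0}}$, and since $|\beta\boldsymbol{0}| = |\beta| + 1 \le |\gamma| + 1 = |\gamma\boldsymbol{0}|$ we have $\beta\boldsymbol{0} \in \bsint{\gamma\boldsymbol{0}}$. If $x = y1$, the same argument via axiom $F_9$ (which relates $x1 \sss y0$ to $x \sss y$) again gives $y \sss \overline{\gamma}$, hence $y = \overline{\beta}$ and $x \equiv \overline{\beta\boldsymbol{1}} \in \bsint{\gamma\boldsymbol{0}}$.

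In each of the three cases $x$ equals a biteral indexed by an element of $\bsint{\alpha}$, so the required disjunction is derivable and the step is complete. The case $\alpha \equiv \gamma\boldsymbol{1}$ is entirely analogous: one uses $\overline{\alpha} \equiv \overline{\gamma}\circ 1$ and applies $F_8$ and $F_{10}$ in place of $F_7$ and $F_9$ to strip the final bit and reduce to the induction hypothesis.

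The one genuinely load-bearing step is the appeal to $F_{11}$: it guarantees that every element of the model is either $e$ or ends in a bit, which is what allows the bit-stripping axioms $F_7$--$F_{10}$ to reduce $x$ to a shorter biteral. Without $F_{11}$ a nonstandard model could contain elements fitting none of the inductive cases, and the disjunction could fail even though $x \sss \overline{\alpha}$ holds. The remaining work is pure bookkeeping: observing that $\overline{\gamma\boldsymbol{0}} \equiv \overline{\gamma}\circ 0$ so the axioms match the terms they are applied to, and verifying the length inequalities that place $\beta\boldsymbol{0}$ and $\beta\boldsymbol{1}$ inside $\bsint{\alpha}$.
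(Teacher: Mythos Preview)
Your proof is correct and follows essentially the same approach as the paper: induction on the length of $\alpha$, base case via $F_6$, and in the inductive step an appeal to $F_{11}$ to split $x$ into the cases $x=e$, $x=y0$, $x=y1$, followed by the bit-stripping axioms $F_7$, $F_9$ (resp.\ $F_8$, $F_{10}$) and the induction hypothesis. Your explicit remark that $F_{11}$ is the load-bearing axiom mirrors the paper's motivation for later replacing $F_{11}$ by this lemma as an axiom scheme in the theory $F'$.
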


\begin{proof}
We prove this lemma by induction on the length of $\alpha$. The base case is $\alpha\equiv \varepsilon$.
The inductive cases are $\alpha\equiv \gamma\boldsymbol{0}$ and $\alpha\equiv \gamma\boldsymbol{1}$.

First we deal with case $\alpha\equiv \varepsilon$. The axiom $F_6$ says that
$$
\forall x[ \ x \sss e \rightarrow x=e \ ] \; .
$$
Thus, we have 
$$
F\vdash \forall x [ \ x\sss \overline{\alpha} \; \rightarrow \bigvee_{\beta\in \bsint{\alpha}} x= \overline{\beta} \ ]
$$
straightaway as $\overline{\alpha}$ is $e$, the set $\bsint{\alpha}$ is the singleton set $\{\varepsilon \}$ and $\overline{\varepsilon}$
is $e$.

We will now turn to the inductive case $\alpha\equiv \gamma\boldsymbol{0}$.
In this case 
it is sufficient to prove 
\renewcommand{\arraystretch}{1.6}
 $$  \begin{array}{ccl}
 (1) & \;\;\;  & F\vdash \forall x [ \ x=e  \; \rightarrow  \; ( \ x \sss \overline{\gamma \boldsymbol{0}} \; \rightarrow \; 
{\displaystyle \bigvee_{\beta\in \bsint{\gamma\boldsymbol{0}} } x= \overline{\beta} }  \ ) \ ]            \\
 (2) & \;\;\;  & F\vdash \forall x y [ \ x=y0  \; \rightarrow  \; ( \ x \sss \overline{\gamma \boldsymbol{0}} \; \rightarrow \;
{\displaystyle \bigvee_{\beta\in \bsint{\gamma\boldsymbol{0}} } x= \overline{\beta} }  \ ) \ ]             \\
 (3) & \;\;\;  & F\vdash  \forall x y[ \ x=y1  \; \rightarrow  \; ( \ x \sss \overline{\gamma \boldsymbol{0}} \; \rightarrow \;
{\displaystyle \bigvee_{\beta\in \bsint{\gamma\boldsymbol{0}} } x= \overline{\beta} } \ ) \ ] \; .            \\
\end{array}$$
\renewcommand{\arraystretch}{1.0}
as it follows from (1), (2), (3) and the axiom  $F_{11}$ that
$$
F\vdash  \forall x [ \ x \sss \overline{\gamma \boldsymbol{0}} \; \rightarrow \;
\bigvee_{\beta\in \bsint{\gamma\boldsymbol{0}} } x= \overline{\beta} \ ] \; . $$
Our induction hypothesis yields 
\begin{align*}
F\vdash \forall x [ \ x\sss \overline{\gamma} \; \rightarrow \bigvee_{\beta\in \bsint{\gamma}} x= \overline{\beta} \ ] \; .       \tag{IH}
\end{align*}

It is trivial that (1) holds. This is a logical truth that holds in any model. 
We do not need any of our non-logical axioms to prove (1). 
Let us turn to the proof of (2). We reason in an arbitrary model for $F$.
Assume $x=y0$ and $x\sss \overline{\gamma \boldsymbol{0}}$. 
We need to argue that 
\begin{align*}
 \bigvee_{\beta\in \bsint{\gamma\boldsymbol{0}} } x= \overline{\beta} \tag{$\dagger$}
\end{align*}
holds in the model.
It is obvious that we have $y0 \sss \overline{\gamma \boldsymbol{0}}$. By $F_7$, we have
$y \sss \overline{\gamma}$. By (IH), we have $$\bigvee_{\beta\in \bsint{\gamma}} y= \overline{\beta}\; .$$
Thus, we also have 
$$\bigvee_{\beta\in \bsint{\gamma}}   y0 = \overline{\beta}0   $$
as $y= \overline{\beta} \rightarrow  y0 = \overline{\beta}0$.
Furthermore, since $x=y0$ and $       \overline{\beta \boldsymbol{0}} \equiv \overline{\beta}0$, we have
\begin{align*}
\bigvee_{\beta\in \bsint{\gamma}}   x =  \overline{\beta \boldsymbol{0}} \; .
 \tag{$\ddagger$}
\end{align*}
Finally, we observe that ($\ddagger$) implies ($\dagger$). This proves (2).

The proof of (3) is symmetric to the proof of (2). Use the axiom $F_9$ in place of $F_7$.
This completes the proof for the  case $\alpha\equiv \gamma\boldsymbol{0}$.
The case $\alpha\equiv \gamma\boldsymbol{1}$ is symmetric.
Use the axioms  $F_8$ and $F_{10}$, respectively, in place of $F_7$ and $F_{9}$.
\qed
\end{proof}

\begin{lemma} \label{fvarmerekordikoben}
Let $\phi(x)$ be an $\mathcal{L}_{BT}$-formula such that 
\begin{align*}
\mathfrak{F} \models \phi(\overline{\alpha}) \; \Rightarrow \; F \vdash \phi(\overline{\alpha})  \tag{*}
\end{align*}
for any $\alpha\in \buni$.
Then we have
\[ \mathfrak{F} \models (\forall x  \sss \overline{\alpha})\phi(x)  \; \Rightarrow \; 
F \vdash (\forall x  \sss \overline{\alpha})\phi(x)  \]
 for any  $\alpha\in \buni$.
\end{lemma}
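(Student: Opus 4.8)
The plan is to derive this lemma directly from the enumeration Lemma~\ref{fcases}, bypassing the inductive format used for $\mathfrak{B}$ and $\mathfrak{D}$ in Lemmas~\ref{varmerekordikoben} and~\ref{dvarmerekordikoben}. First I would unfold the abbreviation: the target $F\vdash(\forall x\sss\overline{\alpha})\phi(x)$ is by definition $F\vdash\forall x[\,x\sss\overline{\alpha}\rightarrow\phi(x)\,]$.

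Next I would turn the semantic hypothesis into finitely many provable instances. Since $\overline{\alpha}$ denotes $\alpha$ in $\mathfrak{F}$ and $a\sss^{\mathfrak{F}}\alpha$ holds exactly when $|a|\leq|\alpha|$, the assumption $\mathfrak{F}\models(\forall x\sss\overline{\alpha})\phi(x)$ says precisely that $\mathfrak{F}\models\phi(\overline{\beta})$ for every $\beta\in\bsint{\alpha}$. The set $\bsint{\alpha}$ is finite, so applying the hypothesis~(*) to each of its members yields $F\vdash\phi(\overline{\beta})$ for every $\beta\in\bsint{\alpha}$.

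Finally I would combine these instances with Lemma~\ref{fcases}, which supplies $F\vdash\forall x[\,x\sss\overline{\alpha}\rightarrow\bigvee_{\beta\in\bsint{\alpha}}x=\overline{\beta}\,]$. Reasoning in an arbitrary model of $F$, suppose $x\sss\overline{\alpha}$; then $x=\overline{\beta}$ for some $\beta\in\bsint{\alpha}$, and since $F\vdash\phi(\overline{\beta})$ we obtain $\phi(x)$. Discharging the assumption on $x$ gives $F\vdash\forall x[\,x\sss\overline{\alpha}\rightarrow\phi(x)\,]$, which is the desired conclusion. The real content of this argument lives in Lemma~\ref{fcases} (already proved); the only point that needs care here is the finiteness of $\bsint{\alpha}$, which is what makes the disjunction a legitimate first-order formula and licenses the finite case split over its disjuncts.
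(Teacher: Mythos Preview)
Your proposal is correct and follows essentially the same route as the paper: both arguments derive $F\vdash\phi(\overline{\beta})$ for every $\beta\in\bsint{\alpha}$ from the semantic hypothesis via~(*), and then invoke Lemma~\ref{fcases} to close the case split and conclude $F\vdash(\forall x\sss\overline{\alpha})\phi(x)$. The paper's proof is just a more compressed version of what you wrote.
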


\begin{proof}
Assume $\mathfrak{F} \models (\forall x  \sss \overline{\alpha})\phi(x)$.
Then, we have $$\mathfrak{F} \models \bigwedge_{\beta\in \bsint{\alpha}} \phi(\overline{\beta}) \; . $$ 
By (*), we have
$$F \vdash  \bigwedge_{\beta\in \bsint{\alpha}} \phi(\overline{\beta}) \; . $$ 
By Lemma \ref{fcases}, we have $F \vdash (\forall x  \sss \overline{\alpha})\phi(x)$.
\qed
\end{proof}

\begin{theorem}[$\Sigma$-completeness of $F$]\label{ffsigcomp}
For any $\Sigma$-sentence $\phi$, we have
$$
  \mathfrak{F} \models \phi \; \Rightarrow \;  F\vdash \phi\; .
$$
\end{theorem}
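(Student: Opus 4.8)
The plan is to prove $\Sigma$-completeness of $F$ by induction on the structure of the $\Sigma$-sentence $\phi$, in exact parallel with the proof of Theorem \ref{bsigcomp}. Since all the supporting lemmas for the structure $\mathfrak{F}$ are now in place, the argument reduces to substituting the appropriate $\mathfrak{F}$-versions of the lemmas into the template already established for $\mathfrak{B}$. I would point out explicitly which lemma replaces which, so that the reader can transport the earlier proof verbatim.

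First I would handle the base cases $\phi\equiv s=t$, $\phi\equiv s\neq t$, $\phi\equiv s\sss t$ and $\phi\equiv s\not\sss t$. For each, Lemma \ref{adagenetter} (which is an $\mathcal{L}_{BT}^-$ result and thus applies to $F$ since $F\supseteq B^-$) gives biterals $\overline{\alpha},\overline{\beta}$ with $F\vdash s=\overline{\alpha}\wedge t=\overline{\beta}$; the Soundness Theorem transfers the truth of the atomic formula from $\mathfrak{F}$ to the relation between $\alpha$ and $\beta$; and then Lemma \ref{fbilly} (for $s\sss t$), Lemma \ref{fbbilly} (for $s\not\sss t$), or Lemma \ref{siste} (for $s=t$ and $s\neq t$) discharges the goal. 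The conjunction, disjunction and unbounded existential cases are purely logical and carry over unchanged from Theorem \ref{bsigcomp}, using the induction hypothesis directly.

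The only case requiring genuine attention is the bounded universal quantifier $\phi\equiv(\forall x\sss t)\psi(x)$. Here I would proceed exactly as in Theorem \ref{bsigcomp}: use Lemma \ref{adagenetter} to obtain $\beta$ with $F\vdash t=\overline{\beta}$, invoke Soundness to get $\mathfrak{F}\models(\forall x\sss\overline{\beta})\psi(x)$, note that the induction hypothesis supplies precisely the hypothesis (*) of Lemma \ref{fvarmerekordikoben} for $\psi$, and then apply Lemma \ref{fvarmerekordikoben} to conclude $F\vdash(\forall x\sss\overline{\beta})\psi(x)$, finishing by rewriting $\overline{\beta}$ back to $t$. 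This is where the structural difference from $\mathfrak{B}$ is absorbed: whereas Lemma \ref{varmerekordikoben} peeled off bits recursively, Lemma \ref{fcases} already packages the full list of strings of bounded length, so Lemma \ref{fvarmerekordikoben} reduces the bounded universal claim to a finite conjunction over $\bsint{\alpha}$.

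The main obstacle, conceptually, is not in this theorem at all but was already overcome in Lemma \ref{fcases}: under the length ordering $\sss$, the set of elements below a bound is large and heterogeneous (unlike the substring or prefix orderings where a clean recursive peeling is available), so establishing that $F$ proves the disjunction enumerating $\bsint{\alpha}$ required the extra axiom $F_{11}$ asserting that every element is either $e$ or ends in a bit. Given that lemma, the present proof is essentially bookkeeping, and I would state plainly that the remaining bounded-existential case $(\exists x\sss t)\psi(x)$ is handled as in the previous theorems and may be left to the reader.
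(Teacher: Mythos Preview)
Your proposal is correct and mirrors the paper's own proof exactly: the paper likewise says to repeat the proof of Theorem \ref{bsigcomp}, substituting Lemma \ref{fbilly} for Lemma \ref{billy}, Lemma \ref{fbbilly} for Lemma \ref{bbilly}, and Lemma \ref{fvarmerekordikoben} for Lemma \ref{varmerekordikoben}. Your additional remarks on the role of $F_{11}$ via Lemma \ref{fcases} are accurate and add useful context, but the core argument is the same template-transfer the paper gives.
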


\begin{proof} 
Given the lemmas above, we can more or less just repeat the proof of Theorem \ref{bsigcomp}.
Use Lemma \ref{fbilly} in place of Lemma \ref{billy}, Lemma \ref{fbbilly} in place of Lemma \ref{bbilly}
and Lemma \ref{fvarmerekordikoben} in place of Lemma \ref{varmerekordikoben}.
\qed
\end{proof}

Both $B$ and $D$ are open theories (all the axioms are purely universal statements) whereas $F$ is not.
The axiom $F_{11}$ contains an existential quantifier.
Can we find a purely universal set of  axioms that is $\Sigma$-complete with respect to the model  $\mathfrak{F}$?
Yes, we can. We can regard Lemma  \ref{fcases} as an axiom scheme. Then we do not need $F_{11}$ to achieve $\Sigma$-completeness.

\begin{definition}
Let $F'$ be the
first-order theory $F$ where the axiom $F_{11}$ is replaced by the scheme
\begin{align*}
\;\;\;\;\;\;\;\;\;\;\;\;\;\;\;\;\;\;\;\; \forall x [ \ x\sss \overline{\alpha} \; \rightarrow \bigvee_{\beta\in \bsint{\alpha}} x= \overline{\beta} \ ]\; .
\tag{for   $\alpha\in \buni$}
\end{align*}
\end{definition}

\begin{theorem}[$\Sigma$-completeness of $F'$]\label{uendffsigcomp}
For any $\Sigma$-sentence $\phi$, we have
$$
  \mathfrak{F} \models \phi \; \Rightarrow \;  F' \vdash \phi\; .
$$
\end{theorem}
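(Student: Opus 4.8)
The plan is to exploit the fact that $F'$ differs from $F$ only in that the single axiom $F_{11}$ is traded for the infinite scheme of Lemma \ref{fcases}, together with the observation that, in the development above, $F_{11}$ was used \emph{only} to establish that very scheme. Consequently every lemma feeding the $\Sigma$-completeness argument survives the replacement of $F$ by $F'$, and the proof reduces to rerunning the induction of Theorem \ref{bsigcomp}.

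First I would record the $F'$-versions of the two ``coding'' lemmas. The proofs of Lemma \ref{fbilly} and Lemma \ref{fbbilly} appeal only to $B^{-}_1, B^{-}_2, B^{-}_4$ (through Lemma \ref{gammelbtre}) and to $F_5, F_6, F_7, F_8, F_9, F_{10}$; neither proof invokes $F_{11}$. Since all these axioms belong to $F'$, both proofs go through verbatim with $F$ replaced by $F'$, giving
\[ \alpha \sss^{\mathfrak{F}} \beta \;\Rightarrow\; F' \vdash \overline{\alpha} \sss \overline{\beta} \qquad\text{and}\qquad \alpha \not\sss^{\mathfrak{F}} \beta \;\Rightarrow\; F' \vdash \overline{\alpha} \not\sss \overline{\beta} \]
for all $\alpha,\beta\in\buni$.

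Next comes the only point where $F_{11}$ played a role, namely Lemma \ref{fcases}. Here the matter is even simpler: the statement of Lemma \ref{fcases} is \emph{precisely} the axiom scheme defining $F'$, so for each $\alpha\in\buni$ we have
\[ F' \vdash \forall x [ \ x \sss \overline{\alpha} \; \rightarrow \bigvee_{\beta \in \bsint{\alpha}} x = \overline{\beta} \ ] \]
immediately, by instantiating an axiom of $F'$ --- no induction and no appeal to $F_{11}$ is needed. Because the proof of Lemma \ref{fvarmerekordikoben} uses nothing beyond its hypothesis $(*)$ and this scheme, the $F'$-analogue of Lemma \ref{fvarmerekordikoben} follows at once: whenever $\mathfrak{F} \models \phi(\overline{\alpha}) \Rightarrow F' \vdash \phi(\overline{\alpha})$ holds for all $\alpha$, we obtain $\mathfrak{F} \models (\forall x \sss \overline{\alpha})\phi(x) \Rightarrow F' \vdash (\forall x \sss \overline{\alpha})\phi(x)$.

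Finally I would repeat the induction on the structure of the $\Sigma$-sentence $\phi$ exactly as in the proof of Theorem \ref{bsigcomp} (equivalently, Theorem \ref{ffsigcomp}), using the $F'$-versions of Lemmas \ref{fbilly}, \ref{fbbilly} and \ref{fvarmerekordikoben} in place of Lemmas \ref{billy}, \ref{bbilly} and \ref{varmerekordikoben}, together with Lemmas \ref{adagenetter} and \ref{siste} (which concern only $B^{-}$ and hence hold in $F'$). The base cases $s=t$, $s\neq t$, $s\sss t$, $s\not\sss t$ and the inductive cases for $\wedge$, $\vee$, unbounded $\exists$, bounded $\exists$ and bounded $\forall$ are then handled identically. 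The main --- and really the only --- thing to verify is the bookkeeping claim that $F_{11}$ is never used outside the proof of Lemma \ref{fcases}; once this is confirmed, the substitution of $F'$ for $F$ is entirely transparent, and in particular one need not show that $F'$ proves $F_{11}$.
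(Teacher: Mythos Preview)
Your proposal is correct and follows exactly the route the paper takes: the paper's proof simply says ``Proceed as in the proof of Theorem \ref{ffsigcomp}. Since the axiom scheme is present, $F_{11}$ will not be needed anymore.'' You have merely unpacked this one-line remark by tracing through the lemmas and verifying explicitly that $F_{11}$ is invoked only in the proof of Lemma \ref{fcases}, which is precisely the bookkeeping check the paper leaves implicit.
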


\begin{proof} 
Proceed as in the proof of Theorem \ref{ffsigcomp}. Since the axiom scheme is present, $F_{11}$ will not be needed anymore.
\qed
\end{proof}

Now, $F'$ is an open theory, but in contrast to $B$ and $D$, it is not finite.

\paragraph{Conjecture:} There is no finite open set of axioms that is $\Sigma$-complete with
respect to the structure $\mathfrak{F}$.

\section{Normal Form Theorems}

After we have endowed bit theory with bounded quantifiers, it becomes natural to search
for normal forms and see if we can prove normal form theorems similar to the ones
 we know from number theory.

Some  lemmas (\ref{sulten}, \ref{famanda}, \ref{samanda} ) in this section can   be found elsewhere, e.g., in 
B\"uchi \& Senger \cite{bs}, Senger \cite{senger} and Karhum\"aki et al.\ \cite{karh}. We have given complete proofs below in order
to make our paper self-contained (the proofs we give do not differ essentially from those given in B\"uchi \& Senger \cite{bs}).

The next lemma shows that conjunctions of equations can be replaced by one equation.

\begin{lemma}\label{sulten}
Let $s_{1}, s_{2}, t_{1}, t_{2}$ be $\mathcal{L}_{BT}$-terms. 
We have 
$$\mathfrak{B}^{-} \models  (s_{1} = t_{1} \wedge s_{2} = t_{2}) \; \leftrightarrow \;
 s_{1}  0  s_{2}  s_{1}  1  s_{2} = t_{1}  0  t_{2}  t_{1}  1  t_{2}\; .$$ 
\end{lemma}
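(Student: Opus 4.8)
The plan is to treat the two directions of the biconditional separately, reading the statement as a fact about the free monoid $\buni$ under concatenation, since $\mathfrak{B}^{-}\models$ means validity under every assignment. The left-to-right direction is immediate: fix an assignment; if it makes $s_1$ and $t_1$ denote the same string and likewise $s_2$ and $t_2$, then replacing equals by equals turns $s_1\boldsymbol{0}s_2 s_1\boldsymbol{1}s_2$ and $t_1\boldsymbol{0}t_2 t_1\boldsymbol{1}t_2$ into the same string. All the work is in the converse, so fix an assignment and let $a_1,a_2,b_1,b_2\in\buni$ be the strings denoted by $s_1,s_2,t_1,t_2$; the hypothesis becomes the word identity
$$ a_1\,\boldsymbol{0}\,a_2\,a_1\,\boldsymbol{1}\,a_2 \;=\; b_1\,\boldsymbol{0}\,b_2\,b_1\,\boldsymbol{1}\,b_2 , $$
and I must deduce $a_1=b_1$ and $a_2=b_2$.

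First I would count lengths. The two sides have lengths $2|a_1|+2|a_2|+2$ and $2|b_1|+2|b_2|+2$, so $|a_1|+|a_2|=|b_1|+|b_2|=:m$ and the common length is $2m+2$. The key structural observation is that each side is a concatenation of two blocks of equal length $m+1$: on the left the blocks are $a_1\boldsymbol{0}a_2$ and $a_1\boldsymbol{1}a_2$ (each of length $|a_1|+1+|a_2|=m+1$), and on the right they are $b_1\boldsymbol{0}b_2$ and $b_1\boldsymbol{1}b_2$. Since both decompositions cut the common word at the same position $m+1$, the first halves coincide and the second halves coincide:
$$ a_1\,\boldsymbol{0}\,a_2 = b_1\,\boldsymbol{0}\,b_2 \qquad\text{and}\qquad a_1\,\boldsymbol{1}\,a_2 = b_1\,\boldsymbol{1}\,b_2 . $$

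Now I would pin down $|a_1|=|b_1|$ by a ``differing position'' argument. Call the first half $F$ and the second half $S$ (concrete strings of length $m+1$). Using the $a$-decomposition, $F$ and $S$ are the same string $a_1\,?\,a_2$ except at position $|a_1|$, where $F$ carries $\boldsymbol{0}$ and $S$ carries $\boldsymbol{1}$; hence the set $\{i:(F)_i\neq(S)_i\}$ is the singleton $\{|a_1|\}$. Using the $b$-decomposition the same set equals $\{|b_1|\}$. As this set depends only on $F$ and $S$, we get $|a_1|=|b_1|$. With equal lengths, the identity $a_1\boldsymbol{0}a_2=b_1\boldsymbol{0}b_2$ forces its length-$|a_1|$ prefixes to agree, i.e.\ $a_1=b_1$, and cancelling the common prefix $a_1\boldsymbol{0}$ then yields $a_2=b_2$, completing the converse.

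The only genuinely delicate step is establishing $|a_1|=|b_1|$: a priori the distinguished $\boldsymbol{0}$ on the left could align with some $\boldsymbol{0}$ occurring inside $b_1$ or $b_2$, and the length count alone does not rule this out. What defeats this is the deliberate repetition of the pattern $a_1\,\cdot\,a_2$ around both a $\boldsymbol{0}$ and a $\boldsymbol{1}$: it makes the two halves $F,S$ differ in exactly one position, and the location of that unique mismatch is an invariant of the word that must be read the same way from either decomposition. Everything else is routine prefix/suffix cancellation in the free monoid.
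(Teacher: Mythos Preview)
Your proof is correct and follows essentially the same approach as the paper: both arguments first use a length count to split the common word into two equal halves $a_1\boldsymbol{0}a_2=b_1\boldsymbol{0}b_2$ and $a_1\boldsymbol{1}a_2=b_1\boldsymbol{1}b_2$, and then use the position of the distinguished bit to force $|a_1|=|b_1|$. The only cosmetic difference is that you phrase this last step directly (the unique index where $F$ and $S$ differ must be read the same from either decomposition), whereas the paper argues by contradiction (assuming $|a_1|<|b_1|$ and comparing the bit at position $|a_1|+1$ in both halves yields $\boldsymbol{0}=\boldsymbol{1}$).
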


\begin{proof}
Let $\alpha_1,\alpha_2,\beta_1,\beta_2\in\buni$.
Assume $\alpha_1  \boldsymbol{0}  \alpha_2  \alpha_1  \boldsymbol{1}  \alpha_2 = \beta_{1}  \boldsymbol{0}  \beta_{2}  \beta_{1}  \boldsymbol{1}  \beta_{2}$.
Then $|\alpha_1  \boldsymbol{0}  \alpha_2| = |\beta_{1}  \boldsymbol{0}  \beta_{2}|$ and $|\alpha_1  \boldsymbol{1} \ \alpha_2| = |\beta_{1}  \boldsymbol{1}  \alpha_2|$.
The proof splits into the two cases $|\alpha_1|=|\beta_1|$ and $|\alpha_1|\neq |\beta_1|$. In the case when $|\alpha_1|=|\beta_1|$, we obviously have $\alpha_1=\beta_1$ and $\alpha_2=\beta_2$.
Assume  $|\alpha_1|\neq |\beta_1|$. We can w.l.o.g. assume that $|\alpha_1|< |\beta_1|$. This implies that 
$$  \boldsymbol{0} \; = \; (\alpha_1  \boldsymbol{0}  \alpha_2)_{|\alpha_1| + 1}  \; = \; (t)_{|\alpha_1| + 1} \; = \; (\alpha_1  \boldsymbol{1}  \alpha_2)_{|\alpha_1| + 1} \; = \;  \boldsymbol{1}\; .$$
This is a contradiction. This proves  the implication from the right to the 
left. The converse implication is obvious.
\qed
\end{proof}

The next lemma shows that disjunctions of equations can be replaced by one equation at the price of some more
existential quantifiers.

\begin{lemma}\label{firecola}
Let $s_{1},s_{1},t_{1},t_{2}$  be $\mathcal{L}_{BT }$-terms.         
There exist $\mathcal{L}_{BT }$-terms $s ,t$ and variables $v_1,\ldots, v_k$ such that 
$$\mathfrak{D} \models (s_{1}\preceq t_{1} \vee s_{2} \preceq t_{2}) \leftrightarrow 
\exists v_{1} \ldots  v_{k}[s = t ]\; .$$
\end{lemma}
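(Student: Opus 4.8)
The plan is to first trade the two prefix relations for ordinary concatenation equations, and then to fuse the resulting disjunction of two equations into a single equation; the second step is where the real work lies.

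First I would eliminate $\preceq$. In $\mathfrak{D}$ the symbol $\preceq$ denotes the prefix relation, so for any terms and any valuation we have $s_i \preceq t_i$ iff $s_i \circ w_i = t_i$ for some string $w_i$. Introducing two fresh variables $w_1,w_2$ not occurring in the given terms, this gives
$$\mathfrak{D} \models (s_{1}\preceq t_{1} \vee s_{2} \preceq t_{2}) \leftrightarrow \exists w_{1} w_{2}[\ s_{1} w_{1} = t_{1} \vee s_{2} w_{2} = t_{2}\ ]\; .$$
The right-to-left direction is immediate; for left-to-right one keeps the witness belonging to the true disjunct and interprets the other variable as $\varepsilon$, which is harmless since the universe is nonempty and $w_1,w_2$ occur nowhere else. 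This reduces the lemma to the claim that a disjunction of two concatenation equations becomes, after prefixing some existential quantifiers, equivalent to a single concatenation equation. Note that this remaining claim mentions only $\circ$, so it may be carried out in $\mathfrak{B}^{-}$ and then transported back to $\mathfrak{D}$, which shares the same universe and the same interpretation of $\circ$.

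The main obstacle is exactly this disjunctive merge, which is the mirror image of the conjunctive merge of Lemma \ref{sulten} and is genuinely more delicate. Conjunction is easy precisely because placing the two equations at one common, length-matched offset (as in $s_{1}0s_{2}s_{1}1s_{2}=t_{1}0t_{2}t_{1}1t_{2}$) lets cancellation force both at once; and this is also the reason an ``or'' is hard, since any naive attempt to line up $s_{1}w_{1}$ against $t_{1}$ and $s_{2}w_{2}$ against $t_{2}$ at fixed equal offsets collapses the intended disjunction back into a conjunction (or, worse, into an unconditional first-letter clash that kills solvability even when the other disjunct holds). The way around this is to give the construction enough length slack, by adjoining further existential padding variables, so that the guessed witnesses can be positioned to satisfy exactly one of the two equations while leaving the other unconstrained; the two distinct constants $0$ and $1$ are then used to forbid spurious cross-matches, by the same position argument as in Lemma \ref{sulten} (if two blocks that are forced to coincide had unequal lengths, a single position would have to carry both a $0$ and a $1$, a contradiction). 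Carrying this out produces terms $s,t$ together with the required variables $v_{1},\ldots,v_{k}$, namely $w_{1},w_{2}$ above together with the padding variables, and reading the single-equation equivalence back in $\mathfrak{D}$ finishes the proof. I expect writing down an explicit such equation and verifying both implications of its solvability to be the only nontrivial part.
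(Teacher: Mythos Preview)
Your reduction to a disjunction of two word equations is fine, but the proposal stops exactly where the difficulty begins. You say yourself that ``writing down an explicit such equation \ldots\ [is] the only nontrivial part'', and you do not write it down; the talk of ``length slack'' and ``padding variables'' is a hope, not a construction. In a single word equation every position of both sides is constrained, so ``leaving the other equation unconstrained'' requires a concrete mechanism, and nothing in your sketch indicates what that mechanism would be or why it could not be defeated by choosing $s_1,t_1,s_2,t_2$ adversarially.

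The paper supplies precisely such a mechanism, and it is quite different from what you outline. Rather than attacking a disjunction of two arbitrary equations head-on, it first rewrites $s_1\preceq t_1 \vee s_2\preceq t_2$ as
\[
\exists x_{1}\ldots x_{6}\big[\, s_{1}=x_{1}x_{2}\wedge t_{1}=x_{1}x_{3}\wedge s_{2}=x_{4}x_{5}\wedge t_{2}=x_{4}x_{6}\wedge(x_{2}=e\vee x_{5}=e)\,\big],
\]
so the only disjunction left is the very special one $u=e\vee w=e$. That special disjunction is then turned into a \emph{conjunction} of equations: it is equivalent to $uw=wu$ together with
\[
\exists y_{1}y_{2}y_{3}y_{4}\big[\, y_{1}y_{2}=0\wedge y_{3}y_{4}=1\wedge uy_{1}wy_{2}=wy_{2}uy_{1}\wedge uy_{3}wy_{4}=wy_{4}uy_{3}\,\big].
\]
If both $u$ and $w$ are nonempty and $uw=wu$, they share the same last bit, and then one of the two commutation equations must fail because $y_{1}y_{2}=0$ and $y_{3}y_{4}=1$ force opposite last bits; if one of $u,w$ is empty the whole conjunction is trivially satisfiable. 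Lemma~\ref{sulten} finally folds the conjunction into a single equation. The missing idea in your proposal is exactly this two-step reduction: first isolate the disjunction as ``one of two strings is empty'', then kill that particular disjunction with the commutativity trick.
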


\begin{proof}
 Let $x_1,\ldots , x_6$ be variables that do not occur in any of the terms $s_1,s_2,t_1,t_2$.
It is not very hard to see that the formula 
$s_{1} \preceq t_{1} \vee  s_{2} \preceq t_{2}$
is $\mathfrak{D}$-equivalent  to the formula 
\begin{align*}
\exists x_{1} \ldots  x_{6} &
[ \ s_{1} = x_{1}  x_{2} \; \wedge \; t_{1} = x_{1}  x_{3} \; \wedge \; \\ & 
 \ s_{2} = x_{4}  x_{5} \; \wedge \;  t_{2} = x_{4}  x_{6} \; \wedge \; 
( x_{2} = e \; \vee \; x_{5} = e ) \ ] \; . \tag{*}
\end{align*}
We will show that the disjunction in (*), that is  $x_{2} = e  \vee  x_{5} = e$, can be replaced by
a formula
$$
x_2x_5 = x_5x_2 \; \wedge \; \exists y_1\ldots y_4[ \ v_1 = v^\prime_1 \; \wedge \; v_2 = v^\prime_2 \; \wedge \; v_3 = v^\prime_3 \ ]
$$
where $v_1, v_2, v_3,  v^\prime_1 , v^\prime_2 , v^\prime_3$ are terms. Thus, by Lemma \ref{sulten} which allow us to merge conjunctions of equations,
(*) will be equivalent to a formula of the form
$$
\exists x_{1} \ldots  x_{6}y_1\ldots y_4[ \ s=t \ ]
$$
and our proof will be complete.

Let $\psi(u,w)$ be the formula
\begin{align*}
  \exists y_1  y_2  y_3  y_4 [\
 y_1y_2 = 0 \; \wedge \; y_3y_4 = 1 
 \; \wedge \; 
 u y_1  w y_2 & = w   y_2  u y_1  \\ &  \; \wedge \;   
u y_3  w y_4  = w y_4  u y_3 \ ] \; .
\end{align*}

We claim that 
\begin{align*}
\mathfrak{B}^{-} \models   (  u = e \; \vee\; w = e  )  \; \leftrightarrow \; 
(  uw = wu   \; \wedge\; \psi(u,w)  )\; . \tag{**}
\end{align*}

We prove (**). Assume that $u = e  \vee w = e$ (we reason in $\mathfrak{B}^{-}$). Let us say that $u = e$ (the case when $w = e$ is symmetric).
It is obvious that we have $uw = wu$. Moreover,  $\psi(u,w)$ holds with $y_1=y_3=e$, $y_2=0$ and $y_4=1$.
This prove the left-right implication of (**). 

To see that the converse implication holds, assume that
$\neg(u = e  \vee w = e)$, that is, both $u$ and $w$ are different from the empty string. Furthermore,
assume that $uw = wu$. We will argue that $\psi(u,w)$ does not hold: Since $uw = wu$ and both $u$ and $w$
contain at least one bit, it is  either the case that 0 is the last bit of both strings, or it is that case
that 1 is the last bit of both strings. If $0$ is the last bit of both, the two equations
$u y_3  w y_4  = w y_4  u y_3$ and  $y_3y_4 = 1$ cannot be satisfied simultaneously.
If $1$ is the last bit of both, the two equations  $u y_1  w y_2  = w   y_2  u y_1$
 and   $y_1y_2 = 0$ cannot be satisfied simultaneously. Hence we conclude that $\psi(u,w)$ does not hold.
This completes the proof of (**).

As explained above, our lemma follows from (*) and (**) by Lemma \ref{sulten}.
\qed
\end{proof}

Karhum\"aki et al.\ \cite{karh}
prove that the next lemma indeed holds with $k=2$.

\begin{lemma}\label{famanda}
Let $s_{1}, s_{2}, t_{1}, t_{2}$ be $\mathcal{L}_{BT}$-terms. 
There exist $\mathcal{L}_{BT}$-terms $s,t$ and variables $v_{1},\ldots ,v_{k}$ such that 
 $$ \mathfrak{B}^{-} \models (s_{1} = t_{1} \vee s_{2} = t_{2}) \leftrightarrow 
\exists v_{1}\ldots  v_{k}[ s=t] \; . $$
\end{lemma}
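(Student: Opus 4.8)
The plan is to reduce the disjunction $s_1 = t_1 \vee s_2 = t_2$ to a disjunction of two \emph{emptiness} statements and then feed it into the machinery already built inside the proof of Lemma~\ref{firecola}. The starting point is the observation that, over the free monoid $\mathfrak{B}^{-}$, a single \emph{equality} can be packaged as one emptiness condition sitting on top of scaffolding that is satisfiable \emph{whatever} the values of its parameters. Concretely, mimicking the prefix decomposition (*) used in Lemma~\ref{firecola}, I would record that for all $\sigma,\tau$
$$
\mathfrak{B}^{-}\models \sigma = \tau \ \leftrightarrow\ \exists x_1 x_2 x_3 y_1 y_2 y_3\big[\ \sigma = x_1 x_2 \wedge \tau = x_1 x_3 \wedge \tau = y_1 y_2 \wedge \sigma = y_1 y_3 \wedge x_2 y_2 = e\ \big]\; .
$$
Here the four $\circ$-equations are always satisfiable (take $x_1=y_1=e$), while the lone emptiness $x_2 y_2 = e$, which in $\mathfrak{B}^{-}$ is the same as $x_2=e \wedge y_2=e$, forces $\sigma$ to be a prefix of $\tau$ (from $x_2=e$) and $\tau$ to be a prefix of $\sigma$ (from $y_2=e$), hence $\sigma=\tau$. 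Thus the genuine content of the equality has been compressed into the single term $x_2 y_2$ being empty.

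Applying this to each disjunct, with a disjoint block of fresh variables $x_1',x_2',x_3',y_1',y_2',y_3'$ for $s_2=t_2$, rewrites the target as
$$
s_1=t_1 \vee s_2=t_2 \ \leftrightarrow\ \exists\, x_1 \ldots y_3\, x_1' \ldots y_3'\big[\ C_1 \wedge C_2 \wedge (\,x_2 y_2 = e \ \vee\ x_2' y_2' = e\,)\ \big]\; ,
$$
where $C_1$ and $C_2$ are the two always-satisfiable scaffolding blocks. Because each block can be satisfied for arbitrary parameters, the only real constraint is carried by the emptiness disjunction: the left disjunct can be realised precisely when $s_1=t_1$ and the right precisely when $s_2=t_2$, so both directions of the equivalence are immediate (the routine check is exactly the one sketched above for a single equality).

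It remains to absorb $x_2 y_2 = e \ \vee\ x_2' y_2' = e$. This is exactly the shape treated by the equivalence (**) inside the proof of Lemma~\ref{firecola}: taking $u := x_2 y_2$ and $w := x_2' y_2'$, it is equivalent to $uw = wu \wedge \psi(u,w)$, a purely existential conjunction of equations. Substituting this in, the entire right-hand side becomes an existentially quantified conjunction of equations, and repeated application of Lemma~\ref{sulten} (folding the conjuncts two at a time) collapses it into a single equation $s=t$, giving the desired form $\exists v_1 \ldots v_k[\,s=t\,]$.

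The step that deserves the most care—and the one I regard as the main obstacle—is precisely the asymmetry between equality and the emptiness engine: equality forces \emph{two} overhangs to vanish, whereas (**) consumes a disjunction of \emph{single} emptiness conditions. The decisive maneuver is to fuse the two overhangs of one equation into one term via the $\mathfrak{B}^{-}$-validity $ab = e \leftrightarrow (a=e \wedge b=e)$, so that each disjunct presents itself as a single emptiness $x_2 y_2 = e$ (resp.\ $x_2' y_2' = e$); one must also verify that the scaffolding blocks $C_1,C_2$ are indeed satisfiable for all parameter values, which is what guarantees that the emptiness disjunction, rather than the scaffolding, governs the truth of the whole formula.
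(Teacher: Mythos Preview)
Your argument is correct and takes a genuinely different route from the paper's. The paper rewrites each equality as a conjunction of mutual prefixes, $s_i=t_i \leftrightarrow (s_i\preceq t_i \wedge t_i\preceq s_i)$, then distributes the disjunction over this conjunction to obtain four conjuncts of the shape $(\cdot\preceq\cdot)\vee(\cdot\preceq\cdot)$, applies Lemma~\ref{firecola} as a black box to each of the four, and finally folds with Lemma~\ref{sulten}. You instead package each equality as always-satisfiable scaffolding plus a \emph{single} emptiness condition $x_2y_2=e$, so that the whole disjunction reduces in one step to $(u=e\vee w=e)$ with $u=x_2y_2$, $w=x_2'y_2'$; you then invoke the equivalence~$(\ast\ast)$ from the proof of Lemma~\ref{firecola} exactly once. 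Your route is more economical---one application of the emptiness-disjunction trick instead of four calls to Lemma~\ref{firecola}, and no distributivity step---and it isolates cleanly the observation that an equality in a free monoid compresses to a single emptiness via the fusion $ab=e\leftrightarrow(a=e\wedge b=e)$. The trade-off is that you cite an internal equivalence from inside a proof rather than a lemma statement; the paper's approach keeps Lemma~\ref{firecola} opaque at the cost of the four-fold blow-up.
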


\begin{proof}
Observe that  $s_{1} = t_{1} \vee s_{2} = t_{2}$ is $\mathfrak{D}$-equivalent  to
$$  (s_{1} \preceq t_{1} \wedge t_{1} \preceq s_{1} ) \vee 
(s_{2} \preceq t_{2} \wedge t_{2} \preceq s_{2} )$$
which again is  (logically) equivalent to
\begin{align*}
 (s_{1} \preceq t_{1} \; \vee \;  s_{2} \preceq t_{2} ) \; \wedge  \;
  (s_{1} \preceq t_{1} \;  \vee \;  & t_{2} \preceq s_{2} ) \; \wedge \; \\ & 
 \  ( t_{1} \preceq s_{1} \; \vee \; s_{2} \preceq t_{2} ) \; \wedge \;
( t_{1} \preceq s_{1} \; \vee \;  t_{2} \preceq s_{2} )\; .
\end{align*}
By Lemma \ref{sulten} and Lemma \ref{firecola}, we have terms $s,t$ and variables $ v_{1}\ldots  v_{k}$ such that
$$ \mathfrak{D} \models (s_{1} = t_{1} \vee s_{2} = t_{2}) \leftrightarrow 
\exists v_{1}\ldots  v_{k}[ s=t] \; . $$
Thus, the lemma holds as we are dealing with a $\mathcal{L}_{BT}^{-}$-formula.
\qed
\end{proof}

\begin{lemma}\label{samanda}
Let $s_{1}, t_{1}$ be $\mathcal{L}_{BT}$-terms. 
There exist $\mathcal{L}_{BT}$-terms $s,t$ and variables $v_{1},\ldots ,v_{k}$ such that 
$$ \mathfrak{B}^{-} \models  s_{1} \neq t_{1}  \leftrightarrow 
\exists v_{1} \ldots  v_{k} [ s=t ] \; .$$
\end{lemma}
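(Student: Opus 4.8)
The plan is to first give a purely word-combinatorial characterization of inequality as an existentially quantified disjunction of (conjunctions of) equations holding in $\mathfrak{B}^{-}$, and then collapse that formula into a single equation using the merging lemmas already established.

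First I would establish that $s_1 \neq t_1$ is $\mathfrak{B}^{-}$-equivalent to
\begin{align*}
\exists w\, u\, v \big[ &(s_1 = w0u \wedge t_1 = w1v) \vee (s_1 = w1u \wedge t_1 = w0v) \big] \vee \\
\exists u \big[ & s_1 = t_1 0 u \vee s_1 = t_1 1 u \vee t_1 = s_1 0 u \vee t_1 = s_1 1 u \big] \; .
\end{align*}
The right-to-left direction is a length/position argument: any witness to one of the first two disjuncts exhibits a common initial part $w$ followed by different bits, so the two strings genuinely differ, while any witness to one of the last four presents one string as the other followed by a nonempty suffix, so the two strings have different lengths. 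For the left-to-right direction, assume the interpretations $\alpha,\beta$ of $s_1,t_1$ are distinct, let $w$ be their longest common prefix, and write $\alpha = w\alpha'$, $\beta = w\beta'$. A case split on which of $\alpha',\beta'$ is empty (they cannot both be empty, and if both are nonempty they must begin with different bits) lands in exactly one of the six disjuncts.

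Next I would pull all existential quantifiers to the front, renaming the block variable $u$ in the second line to a fresh variable so that no capture occurs; this is sound by the standard equivalence that an existential quantifier distributes over a disjunction, together with the fact that vacuous quantifiers may be adjoined freely. The resulting matrix is a sixfold disjunction whose disjuncts are either single equations or conjunctions of two equations. Applying Lemma \ref{sulten} turns each two-equation conjunction into a single equation, so the matrix becomes a disjunction of six equations inside one existential prefix. I would then fold this disjunction with Lemma \ref{famanda}: merge the first two equations into one equation $\exists\,\cdots[s'=t']$, pull the freshly introduced quantifiers to the front, merge the result with the third equation, and so on. After five such applications the matrix is a single equation, and collecting all quantifiers yields the desired form $\exists v_1 \ldots v_k[s=t]$. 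Since every step preserves $\mathfrak{B}^{-}$-equivalence and both Lemma \ref{sulten} and Lemma \ref{famanda} are stated for $\mathfrak{B}^{-}$, the final formula is $\mathfrak{B}^{-}$-equivalent to $s_1 \neq t_1$.

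The main obstacle is the correctness of the combinatorial characterization in the first step, in particular verifying that the longest-common-prefix case analysis is exhaustive and that the two proper-prefix situations are faithfully captured by the four single equations. Everything after that is a mechanical invocation of the already-proved merging lemmas, so once the characterization is pinned down the remainder of the argument is routine bookkeeping of existential quantifiers.
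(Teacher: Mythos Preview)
Your proposal is correct and follows essentially the same route as the paper's own proof: the paper also characterizes $s_1\neq t_1$ by the six-case disjunction arising from the longest-common-prefix analysis (the two ``diverging bit'' cases and the four ``proper prefix'' cases), and then appeals to Lemma~\ref{sulten} and Lemma~\ref{famanda} to collapse the result to a single equation under an existential prefix. The only cosmetic difference is that the paper writes the six disjuncts under one shared quantifier block $\exists x\,y\,z$ rather than renaming, but the argument is the same.
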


\begin{proof}
Observe that the formula  $s\neq t$ is $\mathfrak{B}^{-}$-equivalent  
 to the formula
\begin{align*}\exists x y z [ \ s= t   0    x  \; \vee \; s= t   1    x &  \; \vee \;
 t= s   0    x \vee t= s   1    x  \; \vee \; \\ &
( s= x    1   y \; \wedge \;   t= x    0   z ) \; \vee \;   ( s= x    0   y \; \wedge \;  t= x    1   z )  \ ]\; .
\end{align*}
Thus,  the lemma follows from Lemma \ref{famanda} and Lemma \ref{sulten}.
\qed
\end{proof}

\begin{lemma}\label{martin}
Let $s_1, t_1$ be $\mathcal{L}_{BT}$-terms. There exist $\mathcal{L}_{BT}$-terms $s,t$ and variables $v_{1},\ldots, v_{k}$ such that 
 $$ \mathfrak{D} \models  s_{1} \not\preceq t_{1}  \leftrightarrow 
\exists v_{1} \ldots  v_{k} [s=t] \; .$$
\end{lemma}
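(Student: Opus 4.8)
The plan is to imitate the proof of Lemma~\ref{samanda}: I first express $s_1 \not\preceq t_1$ as a purely existential $\mathcal{L}_{BT}^{-}$-formula whose matrix is a disjunction of equations and conjunctions of equations, and then collapse this into a single existential equation using Lemma~\ref{sulten} and Lemma~\ref{famanda}.

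First I would write down the explicit formula. A string $\alpha$ fails to be a prefix of a string $\beta$ precisely when either $\beta$ is a proper prefix of $\alpha$, or $\alpha$ and $\beta$ first disagree at some common position. Accordingly I claim
\[
\mathfrak{D} \models s_1 \not\preceq t_1 \; \leftrightarrow \; \exists x y z w \big[\, s_1 = t_1 0 w \,\vee\, s_1 = t_1 1 w \,\vee\, (s_1 = x0y \wedge t_1 = x1z) \,\vee\, (s_1 = x1y \wedge t_1 = x0z) \,\big].
\]
Next I would verify this equivalence under an arbitrary assignment, letting $\alpha,\beta \in \buni$ be the values of $s_1,t_1$. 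For the direction from left to right I use the trichotomy for strings: since $\alpha$ is not a prefix of $\beta$, either $\beta$ is a proper prefix of $\alpha$, in which case $\alpha = \beta 0 w$ or $\alpha = \beta 1 w$ for a suitable $w$, or $\alpha$ and $\beta$ diverge, in which case $\alpha = \chi 0 \mu$ and $\beta = \chi 1 \nu$ (or symmetrically), matching one of the last two disjuncts. For the converse I check that each disjunct already forces $\alpha \not\preceq \beta$: the first two disjuncts make $\alpha$ strictly longer than $\beta$, while the last two force a bit mismatch at position $|x|+1$, so neither can hold if $\alpha$ were a prefix of $\beta$; in particular the excluded case $s_1 = t_1$ never satisfies the formula.

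Finally, since the right-hand formula lives in $\mathcal{L}_{BT}^{-}$, its truth in $\mathfrak{D}$ agrees with its truth in $\mathfrak{B}^{-}$, so the earlier merging lemmas apply verbatim. I would use Lemma~\ref{sulten} to replace each of the two conjunctions $s_1 = x0y \wedge t_1 = x1z$ and $s_1 = x1y \wedge t_1 = x0z$ by a single equation, leaving a four-fold disjunction of equations under the existential prefix; then I apply Lemma~\ref{famanda} (iterated) to fold this disjunction into one equation, and gather all the quantifiers into $\exists v_1 \ldots v_k$.

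The hard part will be pinning down the case analysis exactly: because $\preceq$ is asymmetric, the disjunction must capture every non-prefix pair (the ``longer'' case and the ``divergent'' case) while excluding every prefix pair, including equality. This is the point where the argument genuinely differs from the symmetric Lemma~\ref{samanda}, and where a careful appeal to the disjointness axiom $B^{-}_4$ (via the position-$(|x|+1)$ mismatch) is needed to rule out the converse direction.
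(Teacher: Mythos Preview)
Your proposal is correct and follows essentially the same route as the paper: express $s_1 \not\preceq t_1$ by the disjunction ``$t_1$ is a proper prefix of $s_1$'' $\vee$ ``$s_1$ and $t_1$ diverge at some position'', then collapse with Lemmas~\ref{sulten} and~\ref{famanda}. The one minor difference is that the paper writes the proper-prefix clause as $\exists u[t_1 u = s_1] \wedge t_1 \neq s_1$ and then appeals to Lemma~\ref{samanda} to remove the inequality, whereas you unfold it directly as $s_1 = t_1 0 w \vee s_1 = t_1 1 w$; your variant is slightly more economical since it bypasses Lemma~\ref{samanda} altogether.
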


\begin{proof}
The formula $ s_{1} \not\preceq t_{1}$ is $\mathfrak{D}$-equivalent   to the formula 
\begin{align*}
( \ \exists u  [t_1u = s_1] \; \wedge \;  t_1 \neq s_1 \  ) \; \vee \; 
\exists x y z [ \ 
 (t_1 = x    0    y &  \; \wedge \; s_1 =x    1    z  ) \; \vee \; \\ & 
(t_1 = x    1    y \; \wedge \; s_1 =x    0    z  ) \ ]\;  .
\end{align*}
Thus,  the lemma follows from the preceding lemmas.
\qed
\end{proof}

\begin{theorem}[Normal Form Theorem for $\mathfrak{D}$] \label{bispegaard}
Any $\Sigma$-formula $\phi$ is $\mathfrak{D}$-equivalent   to a $\mathcal{L}_{BT}$-formula  of the form 
$$  (\quantq_{1}^{t_{1}} v_{1} )\ldots (\quantq_{m}^{t_{m}} v_{m} ) \,  s=t$$
where $t_{1},..,t_{m}$ are $\mathcal{L}_{BT}$-terms and 
$\quantq_{j}^{t_{j}} v_{j} \in \lbrace \exists v_{j}, \exists v_{j} \preceq t_{j} ,  \forall v_{j} \preceq t_{j} \rbrace$ 
for  $j=1,\ldots , m$. Moreover, if $\phi$ is a purely existential formula, then  $\quantq_{j}^{t_{j}} v_{j}$ is $\exists v_j$.
\end{theorem}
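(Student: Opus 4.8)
The plan is to prove the theorem by structural induction on the $\Sigma$-formula $\phi$, exploiting the preceding lemmas (\ref{sulten}, \ref{firecola}, \ref{famanda}, \ref{samanda}, \ref{martin}) to collapse the atomic and boolean structure down to a single equation, and then pushing all quantifiers out front into the prenex shape required. The target normal form allows three kinds of leading quantifiers---$\exists v_j$, $(\exists v_j \preceq t_j)$ and $(\forall v_j \preceq t_j)$---over a matrix that is a single equation $s=t$, so the core task is really two-fold: (a) at the atomic level, convert every literal into an existentially quantified equation, and (b) at the connective level, show that conjunction, disjunction, and the various quantifiers can each be absorbed while preserving the prenex form.

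For the base cases, I would handle the four literal types $s_1 = t_1$, $s_1 \neq t_1$, $s_1 \preceq t_1$, $s_1 \not\preceq t_1$. An equation $s_1 = t_1$ is already in the required form (with $m=0$). The literal $s_1 \neq t_1$ is converted by Lemma \ref{samanda} into $\exists v_1 \ldots v_k[s=t]$; the prefix $s_1 \preceq t_1$ is handled by noting $s_1 \preceq t_1 \leftrightarrow \exists u[s_1 u = t_1]$, which after Lemma \ref{sulten} is a single existentially quantified equation; and $s_1 \not\preceq t_1$ is converted by Lemma \ref{martin}. Thus every literal becomes a block of unbounded existential quantifiers over one equation $s=t$. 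For the inductive step on connectives, suppose $\psi$ and $\xi$ have been brought to normal form. For disjunction and conjunction I first rename bound variables apart and pull all quantifier prefixes to the front (standard prenex manipulation, valid because the quantifiers range over all of $\mathfrak{D}$ or over bounds involving only the free variables); this leaves a quantifier-free matrix that is a disjunction or conjunction of two equations, which Lemma \ref{famanda} (for $\vee$) and Lemma \ref{sulten} (for $\wedge$) collapse back to a single equation at the cost of a further existential block. For the unbounded existential quantifier $\exists x$ and the bounded quantifiers $(\exists x \preceq t)$ and $(\forall x \preceq t)$, the quantifier is simply prepended to the already-prenex form of the subformula---these are exactly the three allowed leading quantifier types, so no collapsing is needed.

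\textbf{The main obstacle} will be the interaction between the bounded \emph{universal} quantifier $(\forall x \preceq t)$ and the prenex normalization, because a universal quantifier does not commute freely past the inner existential block that the atomic lemmas introduce. Concretely, after processing the matrix of $(\forall x \preceq t)\psi(x)$ we obtain $(\forall x \preceq t)(\exists v_1 \ldots v_k)[s=t]$, and this is already in the permitted shape---the theorem's grammar deliberately allows $\forall v_j \preceq t_j$ followed by unbounded $\exists v_j$, so I must be careful \emph{not} to try to eliminate or reorder these quantifiers, but rather to verify that the result already matches the allowed prefix pattern $\quantq_1^{t_1}v_1 \ldots \quantq_m^{t_m}v_m$. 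The one genuinely delicate point is ensuring that when I merge two normal-form subformulas under $\wedge$ or $\vee$, the merging lemmas (\ref{sulten}, \ref{famanda}) are applied only to the quantifier-free equation matrices after all prefixes (including any $\forall \preceq$ prefixes) have been floated outward without crossing each other illegitimately; this requires tracking variable dependencies so that a bound variable of an outer $\forall \preceq t_j$ may legitimately appear in the bounding term of an inner quantifier. Finally, the ``Moreover'' clause is immediate from the construction: a purely existential $\phi$ contains no bounded universal quantifiers, so no $\forall v_j \preceq t_j$ is ever introduced, and since Lemmas \ref{samanda}, \ref{martin}, \ref{firecola} and \ref{famanda} add only unbounded existential quantifiers, every $\quantq_j^{t_j}v_j$ in the output is of the form $\exists v_j$.
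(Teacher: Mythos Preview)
Your approach is essentially identical to the paper's: structural induction on $\phi$, using Lemmas \ref{samanda} and \ref{martin} for the negated atomic cases, pulling both quantifier prefixes to the front for $\wedge$ and $\vee$ and then merging the resulting pair of equations with Lemma \ref{sulten} or Lemma \ref{famanda}, and simply prepending the three quantifier forms in the quantifier cases.

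There is, however, a small gap in your justification of the ``Moreover'' clause. You argue that since no bounded universal quantifier is ever introduced and the lemmas add only unbounded existentials, every quantifier in the output is of the form $\exists v_j$. But a purely existential $\phi$ may itself contain bounded \emph{existential} quantifiers $(\exists x \preceq t)$, and in your quantifier step you say you ``simply prepend'' such a quantifier to the normal form of the subformula. Hence your construction, as written, can leave quantifiers of the form $\exists v_j \preceq t_j$ in the final prefix, contradicting your conclusion. The repair (which the paper carries out explicitly) is easy: once you know the prefix consists solely of existential quantifiers, unfold each bounded occurrence $(\exists v_j \preceq t_j)$ as $\exists v_j\exists w_j[\,v_j w_j = t_j \wedge \cdots\,]$ and apply Lemma \ref{sulten} once more to absorb the new equations into the matrix. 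Without this last step your ``immediate'' claim does not follow.
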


\begin{proof}
We proceed by induction on the structure of the $\Sigma$-formula $\phi$ (throughout the proof we  reason in the structure $\mathfrak{D}$).
Assume $\phi \equiv s\preceq t$. Then $\phi$ is equivalent to a formula of the form $\exists x[ sx=t]$ and the theorem holds.
Assume $\phi \equiv s\not\preceq t$. Then the theorem holds by Lemma \ref{martin}. Assume $\phi \equiv s\neq t$.
Then the theorem holds by Lemma \ref{samanda}. The theorem holds trivially when $\phi$ is of the form $s=t$.

Suppose $\phi$ is of the form $\psi \wedge \xi$. By our induction hypothesis, we have formulas
$$   (\quantq_{1}^{t_{1}} x_{1})\ldots (\quantq_{k}^{t_{k}} x_{k}) \, s_{1}=t_{1} \;\;\; \mbox{ and } \;\;\;
   (\quantq_{1}^{s_{1}} y_{1})\ldots (\quantq_{m}^{s_{m}} y_{m}) \, s_{2}=t_{2} $$
which are equivalent  to respectively $\psi$ and $\xi$. 
Thus, $\phi$ is equivalent to a formula of the form
$(\quantq_{1}^{t_{1}} x_{1})\ldots (\quantq_{k}^{t_{k}} x_{k})(\quantq_{1}^{s_{1}} y_{1})\ldots (\quantq_{m}^{s_{m}} y_{m})  
( s_{1}=t_{1} \wedge s_{2}=t_{2}) \; .$
By Lemma \ref{sulten}, we have a formula  of the desired form which is equivalent to $\phi$.
The case when $\phi$ is of the form $\psi \vee \xi$ is similar. Use Lemma \ref{famanda} in place of Lemma \ref{sulten}.

The theorem follows trivially from the induction hypothesis when $\phi$ is of one of the forms $(\exists v) \psi$,
$(\forall v \preceq t) \psi$ and $(\exists v \preceq t) \psi$.

If $\phi$ is a purely existential formula, there will be no bounded
universal quantifiers among $(\quantq_{1}^{t_{1}} v_{1} )\ldots (\quantq_{m}^{t_{m}} v_{m} )$.
Thus,
 $\phi$ is equivalent to a formula of the form
$$
\exists v_1 \ldots v_k[  \ s_1\preceq t_1 \wedge \ldots \wedge s_\ell \preceq t_\ell \wedge s=t \ ]
$$
which again is equivalent to a formula of the form
$$
\exists v_1 \ldots v_kx_1\ldots x_\ell[ \ s_1 x_1 = t_1 \wedge \ldots \wedge s_\ell x_\ell = t_\ell \wedge s=t \ ]\; .
$$
By Lemma \ref{sulten}, we can conclude that  any purely existential
formula is equivalent to a formula
of the form $\exists v_1\ldots  v_{m}   [  s=t  ]$.
\qed
\end{proof}

\begin{theorem}[Normal Form Theorem for  $\mathfrak{F}$] \label{fffbispegaard}
Any $\Sigma$-formula $\phi$ is $\mathfrak{F}$-equivalent   to a $\mathcal{L}_{BT}$-formula  of   the form 
$$(\exists v_0)(\quantq_{1} v_1 \sss t_1 )\ldots (\quantq_{m} v_{m} \sss t_m) \, s=t$$
where
$\quantq_{j} \in \lbrace  \exists  ,  \forall  \rbrace$ for $j= 1, \ldots ,m$.  
Moreover, if $\phi$ is a purely existential formula, then $\phi$ is equivalent to a formula of the form
$$\exists v_0\ldots  v_{m} [ \ v_1\sss t_1 \; \wedge  \; \ldots  \; \wedge \; v_m \sss t_m \;  \wedge \;  s=t \ ]\; .$$
\end{theorem}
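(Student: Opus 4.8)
The plan is to mimic the proof of Theorem~\ref{bispegaard}, inducting on the structure of $\phi$ so as first to drive $\phi$ into a prenex $\Sigma$-form whose matrix is a single equation, and then to perform one final rearrangement that produces the single leading unbounded existential $\exists v_0$ demanded by the statement. Throughout I reason in $\mathfrak{F}$, and I use freely that Lemmas~\ref{sulten},~\ref{famanda} and~\ref{samanda} hold in $\mathfrak{B}^{-}$ and hence in $\mathfrak{F}$.

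For the base cases the only genuinely new work, compared to Theorem~\ref{bispegaard}, is to dispose of the order literals: $s=t$ is already in shape, $s\neq t$ is handled by Lemma~\ref{samanda}, and conjunctions and disjunctions of equations are merged into a single equation by Lemmas~\ref{sulten} and~\ref{famanda}. For an order literal I would simply push the relation into a bound. Since $s\sss t$ means $|s|\leq|t|$, it is $\mathfrak{F}$-equivalent to $(\exists v\sss t)[\,v=s\,]$, whose matrix is a single equation; and since $s\not\sss t$ means $|s|>|t|$, it is $\mathfrak{F}$-equivalent to $(\exists v\sss s)[\,v=t0\vee v=t1\,]$, after which Lemma~\ref{famanda} collapses the disjunction to one existentially quantified equation. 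The connective and quantifier cases are then routine prenexing: conjunctions and disjunctions prenex after renaming (which is valid for any quantifier order), their matrices being merged by Lemma~\ref{sulten} and Lemma~\ref{famanda} (the latter introducing further innermost existentials), while the three quantifier cases merely prepend a quantifier to the prefix. This yields an $\mathfrak{F}$-equivalent prenex formula $Q_1v_1\ldots Q_nv_n\,[\,s=t\,]$ in which each $Q_i$ is $\exists v_i$, $(\exists v_i\sss t_i)$ or $(\forall v_i\sss t_i)$.

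The remaining, and main, task is to reduce the possibly many unbounded existentials of this prenex form to the single leading $\exists v_0$. Here I would introduce one fresh variable $v_0$, place $\exists v_0$ outermost, and replace every unbounded quantifier $\exists v_i$ by the bounded quantifier $(\exists v_i\sss v_0)$, leaving the bounded quantifiers untouched; the result is exactly of the required shape. The content of the step is the equivalence of the two formulas, and its hard direction is that a single $v_0$ can bound all the existential witnesses at once even though these witnesses depend on the universally quantified variables lying to their left. This is precisely where $\mathfrak{F}$ differs essentially from $\mathfrak{D}$: a bound $v_i\sss v_0$ constrains only length, so one sufficiently long $v_0$ dominates arbitrarily many strings, whereas a prefix bound $v_i\preceq v_0$ could not. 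I expect to discharge the obstacle by the observation that every bounded universal $(\forall u\sss r)$ ranges over the finite set $\{\,u\mid |u|\leq|r|\,\}$; hence, once the finitely many outer choices are fixed, only finitely many witnesses are ever required, and I may take $v_0$ longer than all of them, after which each $(\exists v_i\sss v_0)$ can select the same witness as before. For the purely existential case there are no bounded universals, so no collapsing is needed: every quantifier is existential, and pulling each $(\exists v_j\sss t_j)$ out as $\exists v_j$ while recording its bound as a conjunct $v_j\sss t_j$ yields directly a formula $\exists v_0\ldots v_m[\,v_1\sss t_1\wedge\cdots\wedge v_m\sss t_m\wedge s=t\,]$.
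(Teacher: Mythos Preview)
Your proposal is correct and follows essentially the same route as the paper. Two minor points of comparison: first, your treatment of $s\not\sss t$ is slightly more laboured than necessary---the paper simply observes that $s\not\sss t$ is $\mathfrak{F}$-equivalent to $t0\sss s$, hence to $(\exists v\sss s)[\,v=t0\,]$, so no disjunction and no appeal to Lemma~\ref{famanda} is needed there. Second, the paper performs the reduction to a single leading unbounded existential \emph{locally} during the induction, using the three rewrites
\[
(\forall x\sss t)(\exists y)\psi \;\leftrightarrow\; (\exists z)(\forall x\sss t)(\exists y\sss z)\psi,\quad
(\exists x\sss t)(\exists y)\psi \;\leftrightarrow\; (\exists y)(\exists x\sss t)\psi,\quad
(\exists x)(\exists y)\psi \;\leftrightarrow\; (\exists z)(\exists x\sss z)(\exists y\sss z)\psi,
\]
so that the inductive hypothesis already delivers a formula with at most one unbounded existential, and prepending any quantifier preserves this after one rewrite. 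Your global step---bound all unbounded existentials by a single fresh $v_0$ after the induction is complete---rests on exactly the same finiteness observation (each bounded universal ranges over a finite set in $\mathfrak{F}$, so only finitely many existential witnesses are needed across all plays); it is just packaged differently. Either organisation works.
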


\begin{proof}
We prove the theorem 
 by induction on the structure of the $\Sigma$-formula $\phi$ (throughout the proof we  reason in the structure $\mathfrak{F}$).
Assume $\phi \equiv s\sss t$. Then $\phi$ is equivalent to a formula of the form $\exists x\sss t [x=s]$, and the theorem holds.
Assume $\phi \equiv s\not\sss t$. Then $\phi$ is equivalent to $t\circ 0 \sss s$, and thus also equivalent to a formula of the form
$\exists x\sss s [x=t\circ 0]$. Hence the theorem holds. Furthermore,
 the theorem holds by Lemma \ref{samanda}  when $\phi \equiv s\neq t$, and  the theorem holds trivially when $\phi \equiv s=t$.

The inductive cases  $\phi \equiv \psi \wedge \xi$ and $\phi \equiv \psi \vee \xi$
are similar to the corresponding cases in the proof of Theorem \ref{bispegaard} (normal form theorem for $\mathfrak{D}$).

The cases  $\phi\equiv (\exists x)\psi$, $ \phi\equiv (\exists x\sss t)\psi$ and $\phi\equiv (\forall x\sss t)\psi$ are
are easy to deal with.
A formula of the form 
\begin{itemize}
\item $(\forall x \sss t)(\exists y)\psi$
 is equivalent  to a formula of the form $(\exists z)(\forall x \sss t)(\exists y \sss z)\psi$
\item $(\exists x \sss t)(\exists y)\psi$ is equivalent
  to a formula of  the form $(\exists y) (\exists x \sss t)\psi$
\item $(\exists x)(\exists y)\psi$ is equivalent
  to a formula of  the form $(\exists z)(\exists x \sss z)(\exists y \sss z)\psi$.
\end{itemize}
Thus,
any $\Sigma$-formula  is $\mathfrak{F}$-equivalent  to a $\Sigma$-formula that
contains maximum one unbounded existential quantifier.

If $\phi$ is a purely existential formula, then $\quantq_{1},\ldots, \quantq_{m}$  will all be existential quantifiers. Thus, it is easy
to see that any purely existential formula is equivalent to a formula of the form
$$\exists v_0\ldots  v_{m} [ \ v_1\sss t_1 \; \wedge  \; \ldots  \; \wedge \; v_m \sss t_m \;  \wedge \;  s=t \ ]\; .$$
\qed
\end{proof}

It is not true that any purely existential formula is $\mathfrak{F}$-equivalent to a formula of the form $\exists x_1\ldots x_n [s=t]$.
This follows  from the results in Karhum\"aki et al.\ \cite{karh}.
E.g., a formula like $x\sss y \wedge y\sss x$ which states that the length of $x$ equals the length of $y$, is not $\mathfrak{F}$-equivalent
to a formula of the form $\exists x_1\ldots x_n [s=t]$. See Example 27 in Section 6 of \cite{karh}.

\begin{lemma}\label{tyggegummi}
Let $s_1,t_1$ be $\mathcal{L}_{BT}$-terms. There exist $\mathcal{L}_{BT}$-terms $s,t$ and variables $v_{1},\ldots, v_k$ such that 
$$\mathfrak{B} \models  s_1 \not\sqsubseteq t_1 \leftrightarrow 
\forall v_{1} \sqsubseteq t_1 \exists v_{2}\ldots v_{k}
[ \ s = t \ ] \; .$$
\end{lemma}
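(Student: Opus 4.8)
The plan is to reduce the statement to the already-established lemma on disequations (Lemma \ref{samanda}), exactly as the proofs of Lemma \ref{martin} and Lemma \ref{samanda} reduce to earlier lemmas. First I would unfold the definition of the substring relation in $\mathfrak{B}$: the string $s_1$ is a substring of $t_1$ precisely when some substring of $t_1$ equals $s_1$. Negating this and using the abbreviation for the bounded universal quantifier gives the $\mathfrak{B}$-equivalence
$$
\mathfrak{B} \models s_1 \not\sqsubseteq t_1 \;\leftrightarrow\; (\forall v_1 \sqsubseteq t_1)[\, v_1 \neq s_1 \,] \; ,
$$
where $v_1$ is a fresh variable not occurring in $s_1$ or $t_1$. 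The forward direction holds because if $s_1 \not\sqsubseteq t_1$ then in particular no substring of $t_1$ can equal $s_1$; the converse holds because if $s_1$ were a substring of $t_1$, then $s_1$ itself would be a witness $v_1 \sqsubseteq t_1$ with $v_1 = s_1$, contradicting the right-hand side.

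Next I would invoke Lemma \ref{samanda} applied to the pair of terms $v_1$ and $s_1$. That lemma provides $\mathcal{L}_{BT}$-terms $s,t$ and variables $v_2, \ldots, v_k$ (renaming the existential variables supplied by the lemma so that they start at $v_2$ and avoid $v_1, s_1, t_1$) such that
$$
\mathfrak{B}^{-} \models v_1 \neq s_1 \;\leftrightarrow\; \exists v_2 \ldots v_k \, [\, s = t \,] \; .
$$
Since $\mathfrak{B}$ is an $\mathcal{L}_{BT}$-expansion of $\mathfrak{B}^{-}$ and both formulas here are $\mathcal{L}_{BT}^{-}$-formulas, this equivalence lifts verbatim from $\mathfrak{B}^{-}$ to $\mathfrak{B}$. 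Substituting the right-hand side for the matrix $v_1 \neq s_1$ inside the bounded universal quantifier then yields
$$
\mathfrak{B} \models s_1 \not\sqsubseteq t_1 \;\leftrightarrow\; (\forall v_1 \sqsubseteq t_1)\, \exists v_2 \ldots v_k \, [\, s = t \,] \; ,
$$
which is exactly the desired normal form.

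The proof is essentially routine once the first equivalence is in place, so there is no serious obstacle; the only points requiring care are bookkeeping ones. I would check that replacing the matrix of a bounded universal quantifier by a $\mathfrak{B}$-equivalent formula preserves $\mathfrak{B}$-equivalence of the whole, which is immediate since equivalence of matrices is preserved under the connectives and quantifiers used in the abbreviation $(\forall v_1 \sqsubseteq t_1)\phi \equiv \forall v_1[v_1 \sqsubseteq t_1 \rightarrow \phi]$. It is worth emphasising the contrast with Lemma \ref{martin}, where $\not\preceq$ in $\mathfrak{D}$ was captured by a purely existential formula: here the bounded universal quantifier $(\forall v_1 \sqsubseteq t_1)$ is retained, reflecting that quantifying over all substrings of $t_1$ is the natural way to express that none of them matches $s_1$.
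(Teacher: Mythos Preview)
Your proof is correct, and it is in fact more direct than the paper's. The paper rewrites $s_1 \not\sqsubseteq t_1$ as $(\forall v \sqsubseteq t_1)[\, vs_1 \not\preceq t_1 \,]$ (where $\preceq$ abbreviates the prefix relation defined in $\mathcal{L}_{BT}^{-}$) and then invokes Lemma~\ref{martin} to replace $vs_1 \not\preceq t_1$ by an existential equation. You instead use the simpler equivalence $s_1 \not\sqsubseteq t_1 \leftrightarrow (\forall v_1 \sqsubseteq t_1)[\, v_1 \neq s_1 \,]$ and go straight to Lemma~\ref{samanda}. Since Lemma~\ref{martin} itself is proved by reduction to Lemma~\ref{samanda} (together with Lemmas~\ref{sulten} and~\ref{famanda}), your route cuts out an intermediate step and typically yields fewer auxiliary existential variables; the paper's detour through the prefix relation buys nothing extra for this lemma. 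Both arguments land on the same normal form with a single bounded universal quantifier followed by a block of existentials, so the downstream use in Theorem~\ref{bbbbispegaard} is unaffected.
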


\begin{proof}
Observe that
$s_1 \not\sqsubseteq t_1$ is $\mathfrak{B}$-equivalent  to 
$(\forall v  \sqsubseteq t_1)\alpha$ where $\alpha$ is
\begin{multline*}
\exists x [ \ t_1x = v s_1 \, \wedge \,  x\neq  e \ ]\;\vee \; \exists x y z [ \ 
(t_1 = x0y \, \wedge \, vs_1 = x1z) \; \vee \; \\ (t_1 = x1y \, \wedge \, vs_1 = x0z)   \  ]  \; .
\end{multline*}
If we let $vs_1 \preceq t_1$ abbreviate $\exists x[v s_1 x = t_1]$, then $\alpha$ can be written as
$vs_1 \not\preceq t_1$.
Thus, the lemma follows by Lemma \ref{martin}.
\qed
\end{proof}

\begin{theorem}[Normal Form Theorem for $\mathfrak{B}$] \label{bbbbispegaard}
Any $\Sigma$-formula $\phi$ is $\mathfrak{B}$-equivalent in  to a $\mathcal{L}_{BT}$-formula  of   the form 
$$(\exists v_0)(\quantq_{1} v_1 \sqsubseteq t_1 )\ldots (\quantq_{m} v_{m} \sqsubseteq t_m) \, s=t$$
where
$\quantq_{j} \in \lbrace  \exists  ,  \forall  \rbrace$ for $j= 1, \ldots , m$.  
\end{theorem}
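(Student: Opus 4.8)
The plan is to mimic the proof of Theorem~\ref{fffbispegaard} (the normal form theorem for $\mathfrak{F}$), proceeding by induction on the structure of the $\Sigma$-formula $\phi$ and reasoning throughout in $\mathfrak{B}$. The base cases are the four atomic and negated-atomic possibilities. When $\phi \equiv s = t$ the formula is already of the required shape (with a vacuous leading $\exists v_0$ and no bounded quantifiers). When $\phi \equiv s \sqsubseteq t$ I would observe that $\phi$ is $\mathfrak{B}$-equivalent to $(\exists x \sqsubseteq t)[x = s]$, which already has the required form. When $\phi \equiv s \neq t$ I would invoke Lemma~\ref{samanda} to replace $\phi$ by a purely existential formula $\exists v_1 \ldots v_k[s' = t']$, and when $\phi \equiv s \not\sqsubseteq t$ I would invoke Lemma~\ref{tyggegummi} to replace $\phi$ by a formula $(\forall v_1 \sqsubseteq t_1)(\exists v_2 \ldots v_k)[s' = t']$. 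In every base case the matrix is a single equation.

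For the inductive cases $\phi \equiv \psi \wedge \xi$ and $\phi \equiv \psi \vee \xi$ I would proceed exactly as in the corresponding cases of Theorem~\ref{bispegaard}: after renaming the bound variables of the two normal forms apart, all their quantifiers commute into a single prenex prefix, and the two matrix equations can be merged into one by Lemma~\ref{sulten} (for $\wedge$) or Lemma~\ref{famanda} (for $\vee$). This keeps the matrix a single equation but may leave several unbounded existential quantifiers scattered through the prefix.

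The remaining work, and the heart of the argument, is to show that every $\Sigma$-formula is $\mathfrak{B}$-equivalent to one containing at most a single, leading, unbounded existential quantifier. As in the $\mathfrak{F}$ proof this reduces to three quantifier-manipulation equivalences (valid in $\mathfrak{B}$):
\begin{itemize}
\item[-] $(\forall x \sqsubseteq t)(\exists y)\psi$ is $\mathfrak{B}$-equivalent to $(\exists z)(\forall x \sqsubseteq t)(\exists y \sqsubseteq z)\psi$;
\item[-] $(\exists x \sqsubseteq t)(\exists y)\psi$ is $\mathfrak{B}$-equivalent to $(\exists y)(\exists x \sqsubseteq t)\psi$;
\item[-] $(\exists x)(\exists y)\psi$ is $\mathfrak{B}$-equivalent to $(\exists z)(\exists x \sqsubseteq z)(\exists y \sqsubseteq z)\psi$.
\end{itemize}
Using the second and third equivalences I can slide unbounded existentials leftward through bounded existentials and coalesce adjacent ones, and using the first I can pull an unbounded existential out of the scope of a bounded universal at the cost of bounding it by a freshly introduced leading existential. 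Iterating these moves pushes all unbounded existentials to the front and merges them into one, after which the matrix (still a single equation, since these moves never touch it) sits behind the prescribed prefix $(\exists v_0)(\quantq_{1} v_1 \sqsubseteq t_1)\cdots(\quantq_{m} v_m \sqsubseteq t_m)$.

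The step I expect to be the main obstacle is justifying the first equivalence, since it is the only one requiring a single bound valid uniformly across \emph{all} substrings of $t$. The left-to-right direction needs a single $z$ that simultaneously dominates, for every substring $x$ of $t$, a witness $y_x$ for $\psi(x,y_x)$. The point is that $t$ has only finitely many substrings, so there are only finitely many witnesses $y_{x_1}, \ldots, y_{x_n}$ to accommodate, and taking $z = y_{x_1}\cdots y_{x_n}$ (the concatenation of all of them) works, because each $y_{x_i}$ occurs as a contiguous block of $z$ and hence satisfies $y_{x_i} \sqsubseteq^{\mathfrak{B}} z$. This finiteness-of-substrings phenomenon is the exact analogue of the finiteness used in the $\mathfrak{F}$ case, and it is what makes the bounded normal form attainable for $\mathfrak{B}$.
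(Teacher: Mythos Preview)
Your proposal is correct and follows essentially the same route as the paper: induction on the structure of $\phi$, with Lemma~\ref{tyggegummi} handling the $s \not\sqsubseteq t$ base case and the three quantifier-manipulation equivalences from the $\mathfrak{F}$ proof reducing to a single leading unbounded existential. Your explicit justification of the collection step $(\forall x \sqsubseteq t)(\exists y)\psi \leftrightarrow (\exists z)(\forall x \sqsubseteq t)(\exists y \sqsubseteq z)\psi$ via the finiteness of substrings and concatenation of witnesses is a detail the paper leaves implicit, and your treatment of $s \sqsubseteq t$ via $(\exists x \sqsubseteq t)[x = s]$ is a minor variant of the paper's $\exists x_1 x_2[x_1 s x_2 = t]$, but neither difference is substantive.
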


\begin{proof}
Proceed by induction on the structure of the $\Sigma$-formula $\phi$. This proof is similar to the proof of  Theorem \ref{fffbispegaard}
 (normal form theorem for $\mathfrak{F}$).
If $\phi$ is  of  form $s\sqsubseteq t$,  then $\phi$ is $\mathfrak{B}$-equivalent a formula of the form $\exists x_1x_2[  x_1sx_2=t  ]$.
If $\phi$ is of the form $s\not\sqsubseteq t$, then then Lemma \ref{tyggegummi} says that $\phi$ is $\mathfrak{B}$-equivalent to a formula of the form 
$\forall v_{1} \sqsubseteq t_1 \exists v_{2}\ldots v_{k}[  s = t  ]$. The remaining cases of the inductive proof are similar to their respective cases in the proof
of Theorem \ref{fffbispegaard}.
\qed
\end{proof}

 We have not been able to  find an interesting normal form  for purely existential formulas which is
stronger than the one for $\Sigma$-formulas in $\mathfrak{B}$. 
It follows form the results in Karhum\"aki et al.\ \cite{karh} 
that the relation $x\not\sqsubseteq^\mathfrak{B} y$ cannot be defined in the structure $\mathfrak{B}^-$:
By Theorem 16 in \cite{karh}, it is not possible to define the language
$$
L \;\; = \;\; \{ \ \alpha \mid \mbox{$\alpha \in \buni$ and $\boldsymbol{1}\boldsymbol{0}\boldsymbol{1} \not\sqsubseteq^\mathfrak{B} \alpha$} \ \}
$$
by a word equation. 
If we could define  $x\not\sqsubseteq^\mathfrak{B} y$  in  $\mathfrak{B}^-$, then it would have been possible to define $L$ by a word equation.
We refer the reader to the paper itself for further details as Theorem 16 is a rather involved statement.

Since it is not possible to define $x\not\sqsubseteq^\mathfrak{B} y$  in $\mathfrak{B}^-$, the normal form theorem for purely existential 
formulas in $\mathfrak{D}$
(Theorem \ref{bispegaard}), will for sure be false with respect to $\mathfrak{B}$. So will the normal form theorem 
for purely existential formulas in $\mathfrak{F}$
(Theorem \ref{fffbispegaard}).

\section{Decidability and Undecidability}

\subsection{Fragments}

Let us start to track the border between the decidable and the undecidable.
On the one hand, we have Makanin's result (Theorem \ref{baaaaasigsafcomp}). We know that it is decidable if a sentence
of the form $\exists \vec{x}[s=t]$ holds in the  standard model. We also know that any purely existential formula is $\mathfrak{D}$-equivalent
to formula of this form. On the other hand it is not very hard to prove a stronger version of Theorem \ref{bsigsafcomp}:

\begin{theorem}[$\Sigma$-Definability of Computable Functions]\label{bsihhhomp}
For any (partially) computable function $f:\nat^n \rightarrow \nat$ there 
exists a $\Sigma$-formula $\phi(x_1,\ldots , x_n,y)$ such that
$$
f(a_1, \ldots, a_n)= b \;\; \Leftrightarrow 
\;\; \mathfrak{D} \models \phi(\overline{\boldsymbol{1}^{a_1}},\ldots , \overline{\boldsymbol{1}^{a_n}},\overline{\boldsymbol{1}^{b}})\; .
$$
Moreover, given a  definition of the function, we can compute the formula. 
\end{theorem}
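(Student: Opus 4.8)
The plan is to re-run the proof of Theorem~\ref{bsigsafcomp} while maintaining two additional invariants: the formula we produce is always a $\Sigma$-formula, and the defining equivalence is read off in $\mathfrak{D}$ instead of $\mathfrak{B}^{-}$. As there, I would induct over a Kleene-recursive definition of $f$, attaching to every function built along the way a $\Sigma$-formula $\phi(x_1,\dots,x_n,y)$ that defines its graph, a natural number $a$ being represented by the biteral $\overline{\boldsymbol{1}^{a}}$; note that the successor of $\overline{\boldsymbol{1}^{a}}$ is simply $\overline{\boldsymbol{1}^{a}}\circ 1$.

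The device that makes the whole construction cheap is special to $\mathfrak{D}$: the prefixes of $\overline{\boldsymbol{1}^{a}}$ are exactly the biterals $\overline{\boldsymbol{1}^{j}}$ with $j\le a$, with no spurious strings in between. Consequently a bounded numerical quantifier ``$\forall j\le a$'' or ``$\exists j\le a$'' is rendered faithfully by the bounded prefix quantifier $(\forall p\preceq\overline{\boldsymbol{1}^{a}})$ or $(\exists p\preceq\overline{\boldsymbol{1}^{a}})$. This is precisely what lets the bounded searches hidden in the $\mu$-operator and in primitive recursion be written with genuine bounded quantifiers, which is what keeps the defining formula inside the $\Sigma$-fragment.

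I would then argue case by case. The basic functions have trivial equational $\Sigma$-definitions, and composition is absorbed by existentially quantifying the intermediate values, which preserves the $\Sigma$-form. For minimization, $f(\vec{x})=\mu z[g(\vec{x},z)=0]$, I would take $\phi_f(\vec{x},y)$ to be $\phi_g(\vec{x},y,e)\wedge(\forall p\preceq y)[\,p=y\vee(\exists w)(w\neq e\wedge\phi_g(\vec{x},p,w))\,]$, which says that $g(\vec{x},y)=0$ and that $g(\vec{x},z)$ is defined and nonzero for every $z<y$; this is a bounded universal quantifier over a $\Sigma$-matrix, hence still $\Sigma$. The substantial case is primitive recursion: to define $f(\vec{x},n)=y$ I would assert the existence of a single string $w$ that codes the computation sequence $c_0,\dots,c_n$ (via the sequence encoding displayed in the introduction), together with the clauses that its $0$-th element satisfies $\phi_g$, that its $n$-th element is $y$, and that $(\forall i\preceq n)$ the $(i+1)$-th element is obtained from the $i$-th by $h$, the last clause being written with the $\Sigma$-formulas for $g$ and $h$ granted by the induction hypothesis together with the element-extraction formula $\phi(x,y,z)$ of the introduction, the relevant elements being existentially quantified inside the bounded universal.

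The hard part is to certify that nothing in the decoding breaks the $\Sigma$-grammar, and it has two halves. First, the predicate ``the $i$-th element of the sequence coded by $w$ is $a$'' must be realised as a $\Sigma$-formula: the substring relation is already purely existential ($x$ is a substring of $y$ iff $\exists u\, v[u\circ x\circ v=y]$), but the negative, regular conditions in the encoding (``consists only of ones'', ``contains no two consecutive zeros'', and the like) look universal and must be massaged into the grammar, either by replacing them with equivalent word equations exactly as the introduction does for ``all ones'' via $x\circ 1=1\circ x$, or by rewriting them as bounded quantifications with atomic, hence negatable, kernels. Second, one must keep the quantifier shape legal: the code $w$ is genuinely unbounded and so may sit under a top-level $\exists w$, whereas the ``for all steps'' quantifier is bounded by $n$, so the body has the permitted shape $\exists w[\dots(\forall i\preceq n)(\exists\dots)\dots]$. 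Once the formula is assembled I would check both directions of the equivalence in $\mathfrak{D}$: the forward direction supplies the actual computation sequence as the witness $w$, and the backward direction uses that $\mathfrak{D}$ is the standard model, so any satisfying $w$ genuinely encodes a halting computation with output $b$. As a shortcut that localises the whole difficulty to a single decidable predicate, one could instead invoke the Kleene normal form $f(\vec{x})=U(\mu z\,T(e,\vec{x},z))$ and reduce everything to showing that the primitive-recursive $T$ and $U$ are defined by bounded formulas over $\mathfrak{D}$.
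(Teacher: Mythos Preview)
The paper does not actually prove this theorem; immediately after the statement it says only that ``with some effort, the interested reader should be able to accomplish a proof of this theorem,'' the sole methodological hint being the earlier remark that Theorem~\ref{bsigsafcomp} goes ``by induction over a Kleene-recursive definition of $f$.'' Your proposal follows exactly this suggested route and is correct; in particular, the $\mathfrak{D}$-specific observation you single out---that the prefixes of $\overline{\boldsymbol{1}^{a}}$ are precisely the tallies $\overline{\boldsymbol{1}^{j}}$ with $j\le a$---is exactly what lets the numeric bounded quantifiers in the minimization and primitive-recursion clauses be rendered by bounded prefix quantifiers, which is the one point where the argument for $\mathfrak{D}$ goes beyond that for $\mathfrak{B}^{-}$.
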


With some effort, the interested reader should be able to accomplish a proof of this theorem.  The theorem implies that it is undecidable if a $\Sigma$-sentence
holds in $\mathfrak{D}$. It is easy to define the relations $\preceq^{\mathfrak{D}}$ and $\not\preceq^{\mathfrak{D}}$ by $\Sigma$-formulas in the structures 
$\mathfrak{B}$ and $\mathfrak{F}$, and
 the bounded quantifiers of $\mathfrak{D}$ can  be expressed by $\Sigma$-formulas in
in $\mathfrak{B}$ and $\mathfrak{F}$, e.g., if 
$$
\mathfrak{D}\models  \phi\;\;\; \Leftrightarrow \;\;\; \mathfrak{B}\models \phi' 
$$
then
$$
\mathfrak{D}\models  (\exists x \preceq t)\phi\;\;\; \Leftrightarrow \;\;\; \mathfrak{B}\models  (\exists xy \sqsubseteq t)[ \ xy=t \; \wedge \; \phi' \ ]
$$
Thus Theorem \ref{bsihhhomp}  also implies  that it undecidable if a $\Sigma$-sentence
holds in $\mathfrak{B}$ or $\mathfrak{F}$.

In order
to gain  further insight into what we can---and cannot---decide in bit theory, we need to keep track of 
 the number and the type of   quantifiers that  appear in our $\Sigma$-formulas.
We will say that a $\Sigma$-formula is
 a $\sigf{n}{m}{k}$-formula if it contains $n$ unbounded existential quantifiers, $m$ bounded existential quantifiers and $k$ bounded universal quantifiers. 
The {\em fragment} $\msigf{n}{m}{k}{A}$ is the set of $\sigf{n}{m}{k}$-sentences that are true in the
 $\mathcal{L}_{BT}$-structure  $\mathfrak{A}$. The {\em (purely existential) fragment} $\exists^{\mathfrak{A}}$ is the set of purely existential sentences that are true in the 
 $\mathcal{L}_{BT}$-structure  $\mathfrak{A}$ (recall that a purely existential formula is a $\Sigma$-formula with no occurrences of bounded universal quantifiers).
A $\Delta$-formula is a $\Sigma$-formula that contain no unbounded existential quantifiers, and the {\em fragment} $\Delta^{\mathfrak{A}}$ is the set of $\Delta$-sentences that are true in the 
 $\mathcal{L}_{BT}$-structure  $\mathfrak{A}$.
Note that 
$$
\exists^{\mathfrak{A}} \,  = \!\! \bigcup_{n,m\in \nat} \! \msigf{n}{m}{0}{A}\;\;\;\;\;\;\;\; \mbox{ and } \;\;\;\;\;\;\;\; \Delta^{\mathfrak{A}} \,  = \!\! \bigcup_{m,k\in\nat}\! \msigf{0}{m}{k}{A}\; .
$$

\subsection{Decidable Fragments}

\begin{theorem}
The fragments $\Delta^{\mathfrak{D}}$, $\Delta^{\mathfrak{B}}$ and $\Delta^{\mathfrak{F}}$ are decidable.
\end{theorem}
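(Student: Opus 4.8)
The plan is to show that for each of the three structures $\mathfrak{D}$, $\mathfrak{B}$ and $\mathfrak{F}$, a $\Delta$-sentence can be evaluated algorithmically. The key observation is that a $\Delta$-sentence contains \emph{no} unbounded existential quantifiers: every quantifier is bounded by some term. By Lemma \ref{adagenetter}, every closed term provably equals a biteral $\overline{\alpha}$, so in the standard structure every closed term evaluates to a concrete, computable string. This means every quantifier bound in a $\Delta$-sentence, once we descend into it, is a concrete string that we can compute.

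First I would make precise that the range of each bounded quantifier is a \emph{finite, effectively computable} set of strings. For $\mathfrak{F}$ the bound $v \sss \overline{\alpha}$ ranges over $\bsint{\alpha} = \{\beta \mid \beta \sss^{\mathfrak{F}} \alpha\}$, which is finite (all strings of length at most $|\alpha|$) and computable from $\alpha$. For $\mathfrak{D}$ the bound $v \preceq \overline{\alpha}$ ranges over the prefixes of $\alpha$, and for $\mathfrak{B}$ the bound $v \sqsubseteq \overline{\alpha}$ ranges over the substrings of $\alpha$; both are finite sets computable from $\alpha$. In each case a bounded quantifier $(\exists x \sim t)\psi$ becomes a finite disjunction $\bigvee_{\beta} \psi(\overline{\beta})$ and $(\forall x \sim t)\psi$ becomes the corresponding finite conjunction, where $\beta$ ranges over the (computable) value of the bound.

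The core of the argument is then a straightforward recursive evaluation procedure, by induction on the structure of the $\Delta$-sentence. The base cases are atomic: for variable-free $s = t$, $s \sqsubseteq t$, $s \preceq t$ or $s \sss t$ (and their negations), we compute the string values of $s$ and $t$ by evaluating $\circ$ as concatenation (Lemma \ref{adagenetter} guarantees these are well-defined concrete strings) and then directly check the relevant relation on strings, which is decidable. For the connectives $\wedge$ and $\vee$ we recurse and combine the boolean values. For a bounded quantifier we compute the finite range of the bound as above, substitute each value in turn, and recurse on the resulting sentences, taking the disjunction or conjunction. Since a $\Delta$-sentence has no unbounded quantifiers, this recursion terminates: every quantifier is replaced by a finite search over computable strings, and the sentence strictly decreases in quantifier depth at each step.

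The main subtlety to handle carefully will be ensuring that, when we substitute a concrete string for a bounded variable, any \emph{inner} bound term that mentions that variable still evaluates to a concrete string, so that the inner quantifier again ranges over a finite computable set. This is exactly why we process quantifiers outside-in: once the outer bounded variable has been assigned a biteral, the inner bound term becomes variable-free, and Lemma \ref{adagenetter} applies to it as well. Thus the whole evaluation stays within finitely many closed sentences at each stage, and the procedure is a terminating algorithm computing the truth value in $\mathfrak{D}$, $\mathfrak{B}$ or $\mathfrak{F}$ respectively. This establishes that $\Delta^{\mathfrak{D}}$, $\Delta^{\mathfrak{B}}$ and $\Delta^{\mathfrak{F}}$ are all decidable.
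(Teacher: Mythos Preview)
Your argument is correct, but it takes a different route from the paper's. The paper proves decidability indirectly via the $\Sigma$-completeness theorems: given a $\Delta$-sentence $\phi$, one computes a $\Delta$-sentence $\phi'$ equivalent to $\neg\phi$, and then uses $\Sigma$-completeness of $B$ (resp.\ $D$, $F$) to conclude that either $B\vdash\phi$ or $B\vdash\phi'$; enumerating proofs then decides truth in $\mathfrak{B}$. Your approach is the direct brute-force evaluation: since every quantifier is bounded by a closed term, each quantifier ranges over a finite, computable set of strings, and the sentence can be unwound into a propositional combination of decidable atomic facts. The paper in fact remarks, immediately after its proof, that this is exactly the more elementary alternative (``we kill a fly with a hammer when we use $\Sigma$-completeness \ldots\ It can of course be checked by brute-force algorithms''). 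Your version has the advantage of being self-contained and not relying on the axiomatizations at all; the paper's version has the advantage of exercising the $\Sigma$-completeness machinery already in place.
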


\begin{proof}
We prove that $\Delta^{\mathfrak{B}}$ is decidable. Let $\phi$ be a $\Delta$-formula.  The negation of a $\Delta$-formula
is logically equivalent to a $\Delta$-formula (use De Morgan's laws).
We can compute a  $\Delta$-formula $\phi'$ which is logically equivalent to $\neg \phi$.
 By Theorem \ref{bsigcomp},
we have  $B\vdash \phi$ if $\mathfrak{B}\models \phi$. By the same theorem, we have $B\vdash \phi'$ if $\mathfrak{B}\models \neg \phi$. 
The set of formulas derivable from
the axioms of $B$ is computably enumerable. Hence it is decidable if  $\phi$ is true in $\mathfrak{B}$. 

We can prove that the fragments $\Delta^{\mathfrak{D}}$ and  $\Delta^{\mathfrak{F}}$ are decidable in the same way
as we also have $\Sigma$-complete axiomatizations for both $\mathfrak{D}$ and $\mathfrak{F}$.
\qed
\end{proof}

In some sense, we kill a fly with a hammer when we use  $\Sigma$-completeness to prove the preceding theorem.
It can of course be checked by  brute-force algorithms if  $\Delta$-sentences are true in  the structures 
$\mathfrak{D}$, $\mathfrak{B}$ and $\mathfrak{F}$.

\begin{theorem}
The fragment  $\exists^{\mathfrak{D}}$ is decidable.
\end{theorem}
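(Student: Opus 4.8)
The plan is to reduce the decision problem for $\exists^{\mathfrak{D}}$ to Makanin's result (Theorem \ref{baaaaasigsafcomp}) by way of the normal form theorem for $\mathfrak{D}$ (Theorem \ref{bispegaard}). The whole point is that $\mathfrak{D}$ was built precisely so that its bounded quantifier (the prefix relation) is existentially definable, and this is exactly what the normal form theorem exploits for purely existential formulas.

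First I would take an arbitrary purely existential sentence $\phi$ and apply Theorem \ref{bispegaard}. By its final clause, $\phi$ is $\mathfrak{D}$-equivalent to a sentence of the form $\exists v_1 \ldots v_m [\, s = t\,]$, where $s$ and $t$ are $\mathcal{L}_{BT}$-terms. The essential observation is that this conversion is \emph{effective}: the proof of Theorem \ref{bispegaard} proceeds by induction on the structure of $\phi$, and each step invokes one of the Lemmas \ref{sulten}, \ref{firecola}, \ref{famanda}, \ref{samanda}, \ref{martin}, every one of which produces its witnessing terms and auxiliary variables explicitly. Hence, given $\phi$, we can actually compute the sentence $\exists v_1 \ldots v_m [\, s = t\,]$.

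Next I would observe that $s$ and $t$ are in fact $\mathcal{L}_{BT}^{-}$-terms, since the constructions in those lemmas introduce no occurrence of $\preceq$; consequently $\exists v_1 \ldots v_m [\, s = t\,]$ is an $\mathcal{L}_{BT}^{-}$-sentence of exactly the shape to which Makanin's theorem applies. Moreover, $\mathfrak{D}$ and $\mathfrak{B}^{-}$ share the same universe and interpret every symbol of $\mathcal{L}_{BT}^{-}$ identically, so $\mathfrak{D} \models \exists v_1 \ldots v_m [\, s = t\,]$ holds iff $\mathfrak{B}^{-} \models \exists v_1 \ldots v_m [\, s = t\,]$. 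Combining this with the $\mathfrak{D}$-equivalence from the previous step, we obtain $\phi \in \exists^{\mathfrak{D}}$ iff $\mathfrak{B}^{-} \models \exists v_1 \ldots v_m [\, s = t\,]$.

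Finally, by Makanin's theorem (Theorem \ref{baaaaasigsafcomp}) it is decidable whether $\mathfrak{B}^{-} \models \exists v_1 \ldots v_m [\, s = t\,]$; running that decision procedure on the computed sentence therefore decides whether $\phi \in \exists^{\mathfrak{D}}$. The only delicate point---the main obstacle---is confirming the effectiveness of the normal form conversion, since Theorem \ref{bispegaard} is stated as an equivalence rather than as an algorithm. This is routine but must be checked: one inspects that each supporting lemma is constructive, so that the inductive transformation can be carried out mechanically with no genuinely new content beyond bookkeeping.
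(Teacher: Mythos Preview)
Your proposal is correct and follows essentially the same approach as the paper: apply the normal form theorem for $\mathfrak{D}$ (Theorem \ref{bispegaard}) to effectively convert any purely existential sentence to one of the form $\exists v_1\ldots v_m[s=t]$, then invoke Makanin's result (Theorem \ref{baaaaasigsafcomp}). The paper's own proof is only three sentences long and makes exactly these moves; your additional remarks about the effectiveness of the lemmas and the agreement of $\mathfrak{D}$ and $\mathfrak{B}^{-}$ on $\mathcal{L}_{BT}^{-}$-sentences simply spell out details the paper leaves implicit.
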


\begin{proof}
Theorem \ref{bispegaard} states that any purely existential formula $\phi$ is $\mathfrak{D}$-equivalent to a formula of
the normal form $\exists v_1\ldots  v_{m} [s=t]$. Our proofs show that there is an algorithm for transforming $\phi$ into this
normal form. Thus the theorem follows by Makanin's result (Theorem \ref{baaaaasigsafcomp}).
\qed
\end{proof}

\paragraph{Open Problem:} Is the fragment  $\exists^{\mathfrak{B}}$ decidable?

\paragraph{Open Problem:} Is the fragment  $\exists^{\mathfrak{F}}$ decidable?

\subsection{The Modulo Problem}

\newcommand{\mmm}{\mathcal{M}}
\newcommand{\X}{{\texttt{X}}}

In \cite{cie18} we use the Post's Correspondence Problem (Post \cite{post}) to prove that the fragments
 $\msigf{3}{0}{2}{D}$, $\msigf{4}{1}{1}{D}$, $\msigf{1}{2}{1}{B}$ and $\msigf{1}{0}{2}{B}$ are undecidable.
We will improve these results considerably in the next section. In this section 
we introduce an undecidable problem that is especially tailored for our needs in that respect, that is, an undecidable
problem that makes the proofs  in next  section smooth and transparent. 
In lack of a better name, we
dub the problem the {\em Modulo Problem}.

We assume that the reader is familiar with modulo arithmetic
and elementary number theory. We use $f^N$ to denote the $N^{\mbox{{\scriptsize th}}}$ iteration of an unary function $f$,
that is, $f^0(X)=X$ and $f^{N+1}(x)= f(f^N(x))$.

\begin{definition}
{\em The Modulo Problem} is given by
\begin{itemize}
\item Instance: a list of pairs $\langle A_{0}, B_{0} \rangle,\ldots , \langle A_{M-1}, B_{M-1} \rangle$ where 
$M>1$ and $A_i,B_i \in \nat$ (for $i=0,\ldots, M-1$).
\item Solution:  a natural number $N$ such that $f^N(3)=2$ where
$$
f(x) \; = \; A_jz+ B_j
 $$
if there exists $j<M$ such that $x=zM+j$.
\end{itemize}
\end{definition}

The undecidability of the Modulo Problem follows from the existence of a certain type of Collatz functions constructed in
Kurtz \& Simon \cite{kurtz}. We will spend the rest of this section to explain why the problem is undecidable.

We will work with the counter machines  introduced by Minsky \cite{minsky}. 
These machines are also known as register machines or Minsky machines. 
A {\em counter machine}  consists of
\begin{itemize}
\item a finite number of registers $\X_1,\ldots \X_n$
\item a finite number of instructions $I_1,\ldots , I_m$.
\end{itemize}
The registers store natural numbers. The instructions tell the machine  how to manipulate  these numbers. The machine starts by executing the instruction $I_2$. Each resister stores
0 when the execution starts (our counter machines do not take input).
The instruction $I_1$ is the unique halting instruction. This instruction does not modify any register, and the machine  halts if and only if this instruction is executed.
Each of the the remaining instructions $I_2,\ldots , I_m$ will either be an {\em increment instruction} or a {\em decrement instruction}.
An increment instruction $I_i$ is of the form $I_i: \X_j, I_k$. The instruction tells the machine to 
increment the natural number stored in the register $\X_j$ by one and then proceed with instruction $I_k$. A decrement instruction $I_i$ is of the form
$I_i :  \X_j, I_k, I_{\ell}$.
The instruction
tells the machine to decrement the natural number stored in the register $\X_j$ by one  if  this is possible, that is, if the number
is strictly greater than zero. 
If it is possible to decrement the number,  the machine will proceed with instruction $I_k$; if it is not possible, 
 the machine will   proceed with instruction $I_\ell$.  We  assume that we have $i\neq k$ in any increment instruction and 
$i\not\in \{ k ,\ell \}$ in any decrement instruction (this makes it easier to see that some of the arguments below indeed are correct).

It is well known that it is undecidable if a counter machine that starts with every register set to zero
will terminate with every register set to zero. Kurtz \& Simon \cite{kurtz} simulate counter machines 
by FracTran programs.
A {\em FracTran program} $f$ is a finite sequence $q_1,\ldots , q_s$ of  non-negative rational numbers.
The {\em transition function} $F_f$ for the FracTran program $f$ maps the natural number $x$ to $q_i x$ where
$i$ is the least $i$ such that $q_i x$ is integral. If no such $i$ exists, we regard $F_f(x)$ as undefined.

Following Kurtz \& Simon \cite{kurtz}, we will  show how we given a counter machine $\mmm$ can construct a FracTran program $f_\mmm$ such that 
there exists $N$ such that $F_{f_\mmm}^N(3)=2$ iff $\mmm$ terminates with every register set to zero.
Let $p_i$ denote the $i^{\mbox{{\scriptsize th}}}$ prime ($p_1=2$).

The execution of a counter machine can be viewed as a sequence
of {\em configurations} of the form
$$
I_i, a_1, a_2,\ldots , a_n
$$
where $I_i$ is the  instruction to be executed next and $ a_1, a_2,\ldots , a_n$, respectively, are the numbers stored in the registers $\X_1,\ldots \X_n$.
We represent a configuration  by a natural number of the form
$$
p_ip_{m+1}^{a_1}p_{m+2}^{a_2}\ldots p_{m+n}^{a_n}\; .
$$
where $i\leq m$.
Let $f_\mmm$ be a FracTran program $q_1,\ldots ,q_s$ such that
\begin{itemize}
\item[(1)] for each increment instruction $I_i: \X_j, I_k$ of $\mmm$'s, there is $t$ such that $q_t = p_k p_{m+j}/p_i$
\item[(2)] for each decrement instruction $I_i :  \X_j, I_k, I_{\ell}$ of $\mmm$'s, there is $t$ such that $q_t = p_k /p_ip_{m+j}$ 
and $q_{t+1} = p_\ell /p_i$
\item[(3)] $q_1,\ldots ,q_s$ is a minimal sequence that satisfies (1) and (2) (so no sequence of length $s-1$ will satisfy (1) and (2)).
\end{itemize}

Let us make some observations.
Let $I_i$ be  the increment instruction $I_i: \X_j, I_k$. Then $p_k p_{m+j}/p_i$ is the one and only $q_t$ in the sequence $q_1,\ldots ,q_s$
such that $q_t p_ip_{m+1}^{a_1}\ldots  p_{m+n}^{a_n}$ is integral.
Thus, we have
\begin{multline*}
F_{f_\mmm}(p_ip_{m+1}^{a_1}\ldots p_{m+j}^{a_j}\ldots p_{m+n}^{a_n}) \;\; = \;\; \frac{p_kp_{m+j}}{p_i} p_ip_{m+1}^{a_1}\ldots p_{m+j}^{a_j}\ldots p_{m+n}^{a_n} \\
\;\; = \;\;  p_kp_{m+1}^{a_1}\ldots p_{m+j}^{a_j + 1}\ldots p_{m+n}^{a_n}.
\end{multline*}
Let $I_i$ be  the decrement instruction $I_i :  \X_j, I_k, I_{\ell}$ and assume that  $\X_j$ stores $a_j>0$.
Now there are exactly two rationals  $r$ in the sequence $q_1,\ldots ,q_s$ such that $r p_ip_{m+1}^{a_1}\ldots  p_{m+n}^{a_n}$ is integral.
These two rationals are $p_k /p_ip_{m+j}$ and $p_\ell /p_i$. Since $p_k /p_ip_{m+j}$ occur before $p_\ell /p_i$,
 we have
\begin{multline*}
F_{f_\mmm}(p_ip_{m+1}^{a_1}\ldots p_{m+j}^{a_j}\ldots p_{m+n}^{a_n}) \;\; = \;\; \frac{p_k}{p_ip_{m+j}} p_ip_{m+1}^{a_1}\ldots p_{m+j}^{a_j}\ldots p_{m+n}^{a_n}
\\ \;\; = \;\;
p_kp_{m+1}^{a_1}\ldots p_{m+j}^{a_j -1 }\ldots p_{m+n}^{a_n} \; .
\end{multline*}
If $I_i$ is  the decrement instruction 
$I_i :  \X_j, I_k, I_{\ell}$ and   $\X_j$ stores $0$, then $p_\ell /p_i$ is the only $q_t$ in the sequence 
that makes $q_t p_ip_{m+1}^{a_1}\ldots  p_{m+n}^{a_n}$  integral, and we have
$$F_{f_\mmm}(p_ip_{m+1}^{a_1}\ldots  p_{m+n}^{a_n}) \;\; = \;\;
p_\ell p_{m+1}^{a_1}\ldots p_{m+n}^{a_n}\; . $$

Any counter machine starts in the configuration $I_2,0,0,\ldots,0$ 
(our counter machines do not take input and every register stores zero when
the execution starts). This start configuration is  represented by the prime $p_2$, that is, 3.
If  a counter machine  halts with  every register set to 0,  it halts in the configuration $I_1,0,0,\ldots,0$. 
This configuration is represented by
the prime $p_1$, that is, 2. Given the observations above, it should be  obvious that there exists $N$ 
such that $F^N_{f_\mmm}(3) =2$ iff $\mmm$ halts with every register  set to 0. Thus, we conclude that it is undecidable
if there exists $N$ such that $F^N_{f_\mmm}(3) =2$.

A Collatz function is a function $C:\nat\rightarrow \nat$ that can be written of the form
$$
C(x)  =  a_i x + b_i \;\;\;   \mbox{ if $x\equiv i \mod{M}$} 
$$
where $a_i,b_i\in \rational$ and $M\in\nat$ (for more on Collatz functions and the related Collatz problem, see \cite{kurtz}). 
Following the ideas of  Kurtz \& Simon \cite{kurtz},
we will now  construct a Collatz function $C_\mmm$ of the more restricted form
$$
C_\mmm(x)  =  a_i x  \;\;\;   \mbox{ if $x\equiv i \mod{M}$.} 
$$
such that we have $C_\mmm(x) = F_{f_\mmm}(x)$ whenever  $F_{f_\mmm}(x)$ is defined.
We  use the FracTran program  $f_\mmm = q_1, \ldots ,q_s$ to determine $M$ and   $a_0,a_1,\ldots ,a_{M-1}$.

For each $q_i$ in the FracTran program $f_\mmm = q_1, \ldots ,q_s$, 
let $c_i,d_i\in \nat$ be such that $c_i/d_i = q_i$ and $c_i$ and $d_i$ are relatively prime.
Set $M$ to the least common multiple of $d_1,\ldots, d_s$ and then follow the procedure \texttt{(I)} 
to determine  $a_0,a_1,\ldots ,a_{M-1}$. When the procedure starts every $a_j$ is undefined.
$$
\begin{array}{l}
\mbox{\texttt{PROCEDURE (I):}}\\
\mbox{for $i:=1,\ldots,s$ do}\\
\;\;\;\;\;\; \mbox{ for $j:=0,\ldots, M-1$ do}\\
\;\;\;\;\;\;\;\;\;\;\;\;\;\;\;\;\;\;        a_j := \begin{cases}
a_j & \mbox{if $a_j$ already is defined}\\
q_i & \mbox{if $a_j$ is not yet defined and $d_i$ divides $j$} \\
\mbox{undefined} & \mbox{otherwise.}
             \end{cases}
\end{array}$$

This completes the construction of $C_\mmm$. Many of the rationals $a_0,a_1,\ldots ,a_{M-1}$ might
still not be defined when the procedure terminates, but that is not important to us. 
The following claim is what  matters to us:
\begin{align*}
 F_{f_\mmm}(x)= q_i x \;\; \Leftrightarrow \;\; C_\mmm(x)=q_ix \; . \tag{claim}
\end{align*}

In order to see that our claim holds, assume that 
$ F_{f_\mmm}(x)= q_i x$. Then, we know that $q_ix\in\nat$  and that $q_\ell x\not\in\nat$ when $\ell < i$.
Let $x= zM + j$ where $j<M$ and let $c_i/d_i = q_i$ where $c_i$ and $d_i$ are relatively prime.
Then, we have
$$
 q_i x \; = \; \frac{c_i}{d_i}(zM + j) \; \in \; \nat \; .
$$
As $d_i$ divides $M$ but not $c_i$, it must be the case that $d_i$ divides $j$.
A symmetric argument yields that $d_\ell$ does not divide  $j$ when $\ell < i$.
Thus, our procedure sets $a_j$ to $q_i$. Hence, we have
$C_\mmm(x) = C_\mmm(zM+ j) = a_jx = q_ix$. This proves that the left-right implication of the claim holds.
It is easy to see that also the right-left implication holds.

It follows from the claim that 
 $F^N_{f_\mmm}(3) =2$ iff $C_\mmm^N(3)=2$. Thus, we conclude that it is undecidable if there exists  $N$ such that $C_\mmm^N(3)=2$
(since we already have concluded that it is  undecidable if there exists  $N$ such that $F^N_{f_\mmm}(3) =2$).

We are soon ready to conclude that the Modulo Problem is undecidable. The function $f$ in the Modulo Problem is a
total function. 
Now, $C_\mmm$ is by no means a total function, but if $C_\mmm(x)$ is defined and $x= zM+j$ where $j<M$, then it turns out that
we have $A_j,B_j\in\nat$ such that $C_\mmm(x)= A_jz + B_j$. To see that such $A_j$ and $B_j$ exist, observe that
$$
C_\mmm(zM+j) \; = \; \frac{c}{d}(zM+j) \; \in \; \nat
$$
for some relatively prime natural numbers $c$ and $d$. Moreover, the algorithm for constructing $C_\mmm$ assures that
$d$ divides both $M$ and $j$.
Hence, let $A_j= cM/d$ and let $B_j= cj/d$, and $A_j$ and $B_j$ will be natural numbers such that $C_\mmm(x)= A_jz+B_j$.
This entails that the following procedure will construct a sequence $\langle A_0,B_0\rangle, \ldots \langle A_{M-1},B_{M-1}\rangle$
of pair of natural numbers: 
$$
\begin{array}{l}
\mbox{\texttt{PROCEDURE (II):}} \\
 \mbox{for $j:=0,\ldots, M-1$ do}\\
\;\;\;\;\;\;\;  \mbox{ -- set  $A_j =  cM/d$ and   $B_j= cj/d$    if procedure \texttt{(I)} defines $a_j$ to equal $c/d$ } \\
\;\;\;\;\;\;\;  \mbox{ -- set  $A_j=B_j=0$  
if procedure \texttt{(I)} leaves $a_j$ undefined.}\\
\end{array}$$
Now,  let
$$
f(x) \; = \; A_jz+ B_j
 $$
if there exists $j<M$ such that $x=zM+j$. Then, we obviously have $f(x) = C_\mmm(x)$ whenever $C_\mmm(x)$ is defined.
Thus, as it is undecidable if there exists $N$ such that $C_\mmm^N(3)=2$, it is also undecidable if there exists $N$ such that $f^N(3)=2$.
These considerations should make it clear that the Modulo Problem is undecidable.

\subsection{Undecidable Fragments}

The stage is now set for our undecidability results. We know that the Modulo Problem is undecidable.
Let $\langle A_{0}, B_{0} \rangle,\ldots , \langle A_{M-1}, B_{M-1} \rangle$ be an instance of the problem,
and let 
$$
f(x) \; = \; A_jz+ B_j
 $$
if there exists $j<M$ such that $x=zM+j$. It is easy to see that there is $N$ such that $f^N(3)=2$ if and only
if there exists a bit string  of the form
\begin{align*}
  \boldsymbol{0}\boldsymbol{1}^{a_0+1}\boldsymbol{0}\boldsymbol{1}^{a_1+1}\boldsymbol{0}\ldots \boldsymbol{0}\boldsymbol{1}^{a_N +1}\boldsymbol{0} \tag{*}
\end{align*}
where
\begin{itemize}
\item $a_0=3$ and $a_N=2$
\item for each $i\in\{0,\ldots , N-1\}$ there is $z\in \nat$ and $j<M$ such that
$$
a_i = zM + j \; \mbox{ and } \; a_{i+1}=  A_jz + B_j \; .
$$
\end{itemize}

The challenge  is to claim the existence  of a bit string $x$ of the form (*) by using as few quantifiers as possible. We will of course need
one unbounded existential quantifier to claim the existence of $x$. Then we will need some quantifiers to state that $x$ is of the
desired form. In the structure $\mathfrak{D}$, we will state
that any prefix of $x$ of the form $y\boldsymbol{0}\boldsymbol{1}$ can be extended to a prefix of $x$ of the form
\begin{align*}
y\boldsymbol{0}\boldsymbol{1}\underbrace{\boldsymbol{1}\boldsymbol{1}\boldsymbol{1}\ldots \boldsymbol{1}\boldsymbol{1}\boldsymbol{1}}_{\mbox{\scriptsize $kM$ copies}}
\underbrace{\boldsymbol{1}\boldsymbol{1}\boldsymbol{1} \ldots \boldsymbol{1}\boldsymbol{1}\boldsymbol{1}}_{\mbox{\scriptsize $j$ copies}}
\boldsymbol{0}\boldsymbol{1}\underbrace{\boldsymbol{1}\boldsymbol{1}\boldsymbol{1} \ldots \boldsymbol{1}\boldsymbol{1}\boldsymbol{1}}_{\mbox{\scriptsize $kA_j$ copies}}
\underbrace{\boldsymbol{1}\boldsymbol{1}\boldsymbol{1} \ldots \boldsymbol{1}\boldsymbol{1}\boldsymbol{1}}_{\mbox{\scriptsize $B_j$ copies}}
\boldsymbol{0} \tag{**}
\end{align*}
for some $j$ and $k$. Recall that $j$, $M$, $A_j$, $B_j$ are fixed natural numbers. Thus, for each $j\in \{0 ,\ldots , M-1\}$,
we can express that $x$ has a prefix of the form (**) by a formula $\exists z [\psi_j(x,y,z)]$ where
$$
\psi_j(x,y,z) \;\; \equiv \;\; z1=1z \; \wedge \; y01\underbrace{zz \ldots zz}_{M}\underbrace{11 \ldots 11}_{j}01\underbrace{zz \ldots zz}_{A_j}\underbrace{11 \ldots 11}_{B_j}0 \; \preceq \; x \; .
$$
In addition we have to state that $x$ should start with $\boldsymbol{0}\boldsymbol{1}\boldsymbol{1}\boldsymbol{1}\boldsymbol{1}\boldsymbol{0}$
and end with $\boldsymbol{0}\boldsymbol{1}\boldsymbol{1}\boldsymbol{1}\boldsymbol{0}$.
This explains why the formula
\begin{multline*}
(\exists x)\Big[ \ 011110\preceq x \; \wedge \; 
(\forall y \preceq x)\big[  \  y01 \not\preceq x \; \vee \; y01110 = x \\ \vee  
(\exists z)[ \  \bigvee_{j=0}^{M-1} \psi_j(x,y,z) \ ]
     \  \big]            \  \Big]
\end{multline*}
claims the existence of a bit string of the form (*). The formula contains two unbounded existential quantifiers and one bounded universal quantifier.
If we could decide if such a formula is true in $\mathfrak{D}$, then we could decide the Modulo Problem. Hence, $\msigf{2}{0}{1}{D}$ is an undecidable fragment.

Similar reasoning will show that the fragments $\msigf{1}{0}{1}{B}$ and $\msigf{1}{3}{2}{F}$   are  undecidable. The details can be found below.

\begin{theorem}\label{milleifiredager}
The fragments $\msigf{2}{0}{1}{D}$, $\msigf{1}{0}{1}{B}$ and $\msigf{1}{3}{2}{F}$   are  undecidable.
\end{theorem}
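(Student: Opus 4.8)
The plan is to reduce the Modulo Problem, shown undecidable above, to the problem of deciding truth in each of the three fragments; since a reduction from an undecidable problem certifies undecidability, three such reductions suffice. For every instance $\langle A_0,B_0\rangle,\ldots,\langle A_{M-1},B_{M-1}\rangle$ I would compute a sentence in the relevant fragment that holds in the corresponding structure if and only if there is an $N$ with $f^N(3)=2$, equivalently if and only if a bit string of the form (*) exists. The heart of each reduction is to assert the existence of such an $x$ while spending exactly the advertised number of quantifiers of each type, and the correctness of each encoding will then follow from the analysis of the Modulo Problem carried out above.

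The case $\msigf{2}{0}{1}{D}$ is essentially already done in the discussion preceding the theorem: the displayed sentence built from the formulas $\psi_j(x,y,z)$, with the commutation trick $z1=1z$ forcing $z$ to be a block of ones, is a $\sigf{2}{0}{1}$-sentence (one unbounded existential for $x$, one bounded universal $\forall y\preceq x$ over prefixes, one unbounded existential for the multiplier $z$), and it is true in $\mathfrak{D}$ exactly when an $x$ of the form (*) exists. I would only verify that the instance-to-sentence translation is computable, which is immediate, and that the prefix bookkeeping correctly propagates the transition $a_i=zM+j$, $a_{i+1}=A_jz+B_j$ along $x$.

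For $\msigf{1}{0}{1}{B}$ I would exploit that the substring relation is strong enough to let a single bounded-universal variable $y\sqsubseteq x$ range over all infixes of $x$ rather than only prefixes. Using double-zero delimiters to pin positions by substring conditions alone, one can re-express each prefix assertion $t\preceq x$ of the $\mathfrak{D}$-formula as a substring assertion, and rearrange the encoding so that the multiplier information is absorbed into substring conditions built directly from $y$ and $x$, leaving one unbounded existential $\exists x$ and one bounded universal $\forall y\sqsubseteq x$ and \emph{no} bounded existentials. For $\msigf{1}{3}{2}{F}$ the situation is reversed: the relation $\sss$ only compares lengths, so every structural claim (that something is a prefix, a substring, or a block of the prescribed shape) must be simulated by combining an equation with length comparisons, and each such simulation costs extra bounded quantifiers. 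I would reconstruct the encoding of (*) over $\mathfrak{F}$ so that the single unbounded existential names $x$, three bounded existentials supply the decompositions needed to locate a block boundary and the multiplier, and two bounded universals enforce that every block obeys the transition and that $x$ has the correct global shape.

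The main obstacle throughout is the exact quantifier accounting rather than the correctness of the encoding: once one has \emph{any} $\Sigma$-sentence expressing that an $x$ of the form (*) exists, soundness and completeness are guaranteed by the Modulo-Problem analysis, but squeezing the substring encoding down to a single unbounded and a single bounded universal quantifier, and fitting the length-only encoding into the $(1,3,2)$ budget, is where all the care lies. I would handle each relation separately and lean on the normal-form machinery of Section~\ref{secrefen} (in particular Lemma~\ref{sulten} to merge equations, and the prefix- and length-eliminations behind Theorems~\ref{bispegaard} and~\ref{fffbispegaard}) to keep the counts minimal. I expect the $\mathfrak{B}$ reduction---eliminating the multiplier quantifier by means of the substring relation---to be the most delicate step.
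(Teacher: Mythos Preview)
Your plan for $\mathfrak{D}$ matches the paper exactly, and your description of the $\mathfrak{F}$ encoding is close in spirit to what the paper does: one unbounded $\exists x$, a bounded $\exists v\sss x$ to express $0\repnot{1}{4}0v=x$, a double bounded universal $\forall u,v\sss x$ testing $u01v=x$, and a double bounded existential $\exists z,y\sss x$ inside, giving the $(1,3,2)$ count. The paper does not use the normal-form lemmas here at all; the formulas are written down directly, so your plan to ``lean on Lemma~\ref{sulten} and the prefix/length eliminations'' is unnecessary overhead.

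The substantive divergence is in the $\mathfrak{B}$ case. You propose to simulate prefix assertions by substring assertions using double-zero delimiters, with $y$ still playing the role of a position marker, and then somehow ``absorb'' the multiplier into substring conditions. The paper does something simpler and sharper: it \emph{repurposes the universal variable itself as the multiplier}. Since $y$ ranges over all substrings of $x$, it in particular ranges over every block of ones occurring in $x$; restricting attention to those via the antecedent $y1=1y$, the transition is expressed as the implication
\[
\big(\,y1=1y \;\wedge\; 01\,\repnot{y}{M}\repnot{1}{j}0\sqsubseteq x \;\wedge\; \repnot{y}{M}\repnot{1}{j}\neq\repnot{1}{2}\,\big)\;\rightarrow\; 01\,\repnot{y}{M}\repnot{1}{j}01\,\repnot{y}{A_j}\repnot{1}{B_j}0\sqsubseteq x,
\]
which is quantifier-free. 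The disjunction over $j$ sits under a single $\forall y\sqsubseteq x$ and a single $\exists x$, and no separate multiplier quantifier is ever introduced. Your delimiter idea may well be workable, but it is a different route, and the step you flag as ``most delicate''---getting rid of the multiplier---evaporates once you see that $y$ can \emph{be} the multiplier rather than locate it.
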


\begin{proof}
For any $\mathcal{L}_{BT}$-term $t$ let   $\repnot{t}{0}\equiv e$ and 
let $\repnot{t}{n+1}\equiv  t \circ \repnot{t}{n}$. Furthermore, note that the
word equation $x\boldsymbol{1}= \boldsymbol{1}x$ is satisfied iff $x\in \{ \boldsymbol{1}\}^*$.

Let 
$\langle A_{0}, B_{0} \rangle,\ldots , \langle A_{M-1}, B_{M-1} \rangle$ be an instance of the Modulo Problem.

First we prove that $\msigf{2}{0}{1}{D}$ is undecidable.
For $i\in \{0, \ldots, M-1\}$, let
$$
\psi_j(x,y,z) \;\; \equiv \;\;    z1= 1z \; \wedge \;  y01\repnot{z}{M}\repnot{1}{j}01\repnot{z}{A_j}\repnot{1}{B_j}0\preceq x \; .
$$
and let $\phi$ be the $\sigf{2}{0}{1}$-formula
\begin{multline*}
(\exists x)\Big[ \ 01\repnot{1}{3}0\preceq x \; \wedge \; 
(\forall y \preceq x)\big[  \  y01 \not\preceq x \; \vee \; y01\repnot{1}{2}0 = x \\ \vee  
(\exists z)[ \  \bigvee_{j=0}^{M-1} \psi_j(x,y,z) \ ]
     \  \big]            \  \Big]
\end{multline*}
Then $\mathfrak{D}\models \phi$ if and only if the instance has a solution.
Obviously, there is an algorithm for constructing $\phi$ when the instance is given, and thus 
no algorithm can decide if a $\sigf{2}{0}{1}$-sentence is true in $\mathfrak{D}$, that is, 
the fragment $\msigf{2}{0}{1}{D}$ is undecidable.

Next we prove that $\msigf{1}{0}{1}{B}$ is undecidable.
For $j\in \{0,\ldots , M-1 \}$, let $\xi_j(y,x)$ be the formula 
\begin{multline*}
 \Big( \ y1=1y \; \wedge \; 01\repnot{y}{M}\repnot{1}{j}0\sqsubseteq x \; \wedge \; 
\repnot{y}{M}\repnot{1}{j}\neq \repnot{1}{2} \ \Big)   \;\; \rightarrow \;\; \\  01\repnot{y}{M}\repnot{1}{j}01\repnot{y}{A_j}\repnot{1}{B_j}0\sqsubseteq x \; .
\end{multline*}
Note that $\xi_j(y,x)$ is
 trivially equivalent to a  $\sigf{0}{0}{0}$-formula $\hat{\xi}_j(y,x)$.  Let $\phi'$ be the $\sigf{1}{0}{1}$-formula
$$
(\exists x ) \Big[ \ 01\repnot{1}{3}0 \sqsubseteq x \; \wedge \; (\forall y\sqsubseteq x)[ \ \bigvee_{j=0}^{M-1} \hat{\xi}_j(y,x) \ ] \ \Big]\; .
$$
Then $\mathfrak{B}\models \phi'$ if and only if the instance has a solution. There is an algorithm for constructing $\phi'$ from
the instance, and thus we conclude that $\msigf{1}{0}{1}{B}$ is undecidable.

We are left to prove that $\msigf{1}{3}{2}{F}$ is undecidable.
The following formula does to job:
\begin{multline*}
(\exists x) \Big[ \ (\exists v \sss x) [ \ 01\repnot{1}{3}0v =x \ ] \; \wedge \; 
(\forall uv \sss x) \big[ \ u01v \neq x \; \vee \; \\ u01\repnot{1}{2}0 = x 
 \; \vee \; (\exists zy\sss x)[ \     \bigvee_{j=0}^{M-1} \eta_j(z,u,y,x)       \   ] \ \big] \ \Big]
\end{multline*}
where
$$\eta_j(z,u,y,x) \;\; \equiv \;\;  z1=1z \; \wedge \; u 01 \repnot{z}{M}\repnot{1}{j}01 \repnot{z}{A_j}\repnot{1}{B_j}0y=x
$$ 
for $j \in \{0,\ldots , M-1 \}$.
\qed
\end{proof}

\begin{corollary}\label{hodepine}
(i) It is undecidable whether a sentence of the form $$(\exists x_1)(\forall y\preceq x_1)(\exists x_2)\ldots (\exists x_n)\, s=t $$
is true in $\mathfrak{D}$. (ii) It is undecidable whether a sentence of the form $$(\exists x_1)(\forall y_1 y_2 \sqsubseteq x_1)(\exists x_2)\ldots (\exists x_n)\, s=t $$
is true in $\mathfrak{B}$. (iii) It is undecidable whether a sentence of the form $$(\exists x_1)(\forall y_1 y_2 \sss x_1)(\exists x_2)\ldots (\exists x_n)\, s=t $$
is true in $\mathfrak{F}$.
\end{corollary}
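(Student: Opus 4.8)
The plan is to derive all three parts from Theorem~\ref{milleifiredager} by flattening the reduction sentences constructed there into the single‑equation shape demanded here, using the relation‑elimination lemmas and the normal‑form theorems of the previous section. Since the reduction from the Modulo Problem to the Theorem~\ref{milleifiredager} sentences is effective and the flattening is effective, the composite reduction witnesses undecidability of each stated class. Two simple observations carry the argument. First, a bounded universal quantifier over the prefixes, substrings, or shorter strings of $x$ always has nonempty range (because $\varepsilon$ qualifies), so any conjunct constraining $x$ alone may be pulled inside the body of such a quantifier; concretely $(\exists x)[\,P(x)\wedge(\forall y\preceq x)D\,]$ is equivalent to $(\exists x)(\forall y\preceq x)[\,P(x)\wedge D\,]$. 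Second, a bounded existential whose witness is pinned down by a word equation with $x$ on one side may be relaxed to an unbounded existential, since the equation already forces the witness to be short; this lets me move the residual existentials behind the universal block and merge the matrix into one word equation by Lemmas~\ref{sulten} and~\ref{famanda}.

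For (i) I would start from the $\sigf{2}{0}{1}$‑sentence of Theorem~\ref{milleifiredager}, written as $(\exists x)[\,01\repnot{1}{3}0\preceq x\wedge(\forall y\preceq x)D(x,y)\,]$. Absorbing the prefix conjunct into the nonempty bounded universal gives $(\exists x)(\forall y\preceq x)[\,01\repnot{1}{3}0\preceq x\wedge D(x,y)\,]$. The body is purely existential, because in $\mathfrak{D}$ the negated prefix $\not\preceq$ is itself purely existential (Lemma~\ref{martin}); hence, by the purely existential case of the Normal Form Theorem for $\mathfrak{D}$ (Theorem~\ref{bispegaard}), it is $\mathfrak{D}$‑equivalent to $\exists v_1\ldots v_k[\,s=t\,]$. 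This yields exactly the shape in (i), with a single bounded universal.

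Part (iii) runs the same way over $\mathfrak{F}$: the two bounded universals $\forall u v\sss x$ are already present, the negated atoms $\neq$ and $\not\sss$ are purely existential in $\mathfrak{F}$ (Lemma~\ref{samanda} and the equivalence $s\not\sss t\leftrightarrow\exists x\sss s[\,x=t\circ 0\,]$), and the start‑condition conjunct folds into the nonempty universal as above. The three bounded existentials $\sss x$ are each pinned by a word equation and so may be taken unbounded, after which Lemmas~\ref{sulten} and~\ref{famanda} collapse the disjunction of word equations into one. This delivers $(\exists x)(\forall u v\sss x)\exists v_1\ldots v_k[\,s=t\,]$, the shape in (iii).

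The genuinely delicate part is (ii), and this is where I expect the main obstacle. Over $\mathfrak{B}$ the negated substring relation is \emph{not} purely existential: Lemma~\ref{tyggegummi} trades a single $s_1\not\sqsubseteq t_1$ for one bounded universal $\forall v_1\sqsubseteq t_1$. The $\sigf{1}{0}{1}$‑sentence of Theorem~\ref{milleifiredager} hides one such negated substring inside each of the $M$ disjuncts $\hat{\xi}_j$, so eliminating them naively would spend $M$ bounded universals rather than the single extra one permitted. To stay within two, I would rewrite the reduction so that only one $\not\sqsubseteq$‑test survives: quantify a single all‑ones block value $y$ and test the lone antecedent $01 y 0\sqsubseteq x$, moving the case split on $|y|\bmod M$ (which selects the pair $\langle A_j,B_j\rangle$) into the positive, purely existential consequent by existentially guessing a decomposition $y=\repnot{w}{M}\repnot{1}{j}$. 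Then the single $\not\sqsubseteq$ contributes one bounded universal via Lemma~\ref{tyggegummi}, the outer block scan contributes the other, and collapsing the remaining positive $\sqsubseteq$‑atoms and merging the equations produces $(\exists x)(\forall y_1 y_2\sqsubseteq x)\exists v_1\ldots v_k[\,s=t\,]$, the shape in (ii). Checking that this single‑$\not\sqsubseteq$ rewrite still captures exactly the existence of a computation string of the form (*) is the step that needs the most care.
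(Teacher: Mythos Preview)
Your argument for (i) is essentially the paper's: apply the Normal Form Theorem for $\mathfrak{D}$ (Theorem~\ref{bispegaard}) to the $\sigf{2}{0}{1}$-sentence from Theorem~\ref{milleifiredager}; your extra care in folding the prefix conjunct into the nonempty universal is fine and makes the quantifier positions explicit.

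For (ii) and (iii), however, you are working much too hard, and for (ii) you leave a genuine gap. The paper does \emph{not} start from the $\mathfrak{B}$- or $\mathfrak{F}$-sentences of Theorem~\ref{milleifiredager} at all; it derives (ii) and (iii) directly from the sentence obtained in (i). The observation is that a prefix quantifier can be simulated in $\mathfrak{B}$ (and in $\mathfrak{F}$) by two bounded quantifiers plus one equation: $y$ is a prefix of $x_1$ iff $y\circ y_2 = x_1$ for some $y_2$, so
\[
\mathfrak{D}\models (\forall y \preceq x_1)\,\psi(y)
\;\;\Longleftrightarrow\;\;
\mathfrak{B}\models (\forall y_1 y_2 \sqsubseteq x_1)\bigl[\, y_1 y_2 \neq x_1 \,\vee\, \psi(y_1)\,\bigr],
\]
the bounds costing nothing because $y_1 y_2 = x_1$ already forces both factors to be substrings (resp.\ no longer than $x_1$, for $\mathfrak{F}$). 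Applied to the single-equation sentence produced in (i), this gives a matrix $y_1 y_2 \neq x_1 \vee s=t$, which Lemmas~\ref{samanda} and~\ref{famanda} collapse to one word equation behind a fresh block of unbounded existentials. That is the entire proof of (ii) and (iii).

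Your route for (ii), by contrast, commits you to redesigning the $\mathfrak{B}$-reduction so that only a single $\not\sqsubseteq$ survives, and you explicitly leave the correctness of that rewrite unverified. Even if it can be made to work, this detour through Lemma~\ref{tyggegummi} is unnecessary: once (i) is established, (ii) and (iii) are one-line translations, and no $\not\sqsubseteq$ ever needs to be eliminated.
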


\begin{proof}
Consider the $\sigf{2}{0}{1}$-sentence  $\phi$ if the proof of the preceding theorem.
By 
Theorem \ref{bispegaard},  $\phi$ is $\mathfrak{D}$-equivalent to sentence of the form
$$(\exists x_1)(\forall y\preceq x_1)(\exists x_2)\ldots (\exists x_n)\, s=t \; .$$
This proves that (i) holds. Furthermore, we have 
$$\mathfrak{D} \models (\exists x_1)(\forall y\preceq x_1)(\exists x_2)\ldots (\exists x_n)\,  s=t $$
if and only if
$$\mathfrak{B} \models (\exists x_1)(\forall y_1 y_2 \sqsubseteq x_1)(\exists x_2)\ldots (\exists x_n)[ \ y_1\circ y_2\neq x_1 \; \vee \;  s=t \ ]$$
if and only if
$$\mathfrak{F} \models (\exists x_1)(\forall y_1 y_2 \sss x_1)(\exists x_2)\ldots (\exists x_n)[ \ y_1\circ y_2\neq x_1 \; \vee \;  s=t \ ] \; . $$
Thus, (ii) and (iii) hold by Lemma \ref{famanda} and Lemma \ref{samanda}.
\qed
\end{proof}


\begin{thebibliography}{Aa}

\bibitem{bs}
  B\"uchi, J. R.  and  Senger, S.:  
{\em Coding in the existential theory of concatenation.} Archiv f\"ur mathematische Logik und Grundlagenforschung \textbf{26} (1986/7), 101-106.



\bibitem{corcoran}
Corcoran, J., Frank, W. and Maloney, M.: {\em String theory.}
The Journal of Symbolic
Logic
\textbf{39} (1974),  625-637.



 
\bibitem{day}
 Day, J.,  Ganesh,  V.,  He, P.,   Manea, F. and  Nowotka, D.:  {\em The satisfiability of extended word equations: 
The boundary between decidability and undecidability.} arXiv:1802.00523 (2018). 

\bibitem{ganesh}
 Ganesh,  V.,  Minnes,  M.,  Solar-Lezama,  A. and  Rinard, M. C.:  {\em Word equations with length constraints: What’s decidable?} 
In: Biere A., Nahir A., Vos T. (eds), Hardware and Software: Verification and Testing. 
HVC 2012. LNCS 7857,  pp. 209–226. Springer, Berlin, Heidelberg.




\bibitem{grz}
Grzegorczyk, A.: {\em Undecidability without arithmetization.}
Studia Logica
\textbf{79} (2005),  163-230.


\bibitem{zd}
Grzegorczyk, A. and Zdanowski, K.:
{\em Undecidability and concatenation.} In: Ehrenfeucht et al. (eds),
Andrzej Mostowski and Foundational Studies,  pp. 72-91, IOS, Amsterdam, 2008. 






\bibitem{hajek}
Hajek, P. and Pudlak, P.:
{\em Metamathematics of first-order arithmetic.}
Perspectives in
Mathematical Logic. Springer-Verlag, Berlin, 1993.






\bibitem{halfon}
Halfon, S.,  Schnoebelen, P. and Zetzsche G: {\em Decidability, complexity, and expressiveness of first-order 
logic over the subword ordering.} 
In: 
2017 32nd Annual ACM/IEEE Symposium on Logic in Computer Science (LICS), \newline \texttt{DOI: 10.1109/LICS40289.2017}



\bibitem{hermes}
Hermes, H.: 
{\em Semiotik. Eine Theorie der Zeichengestalten als Grundlage f\"ur Un-
tersuchungen von formalisierten Sprachen.}
Vol. 5 of
``Forschungen zur Logik und zur
Grundlegung der exakten Wissenschaften'', Neue Folge,  S.\ Hirzel, Leipzig, 1938.

\bibitem{hig}
Higuchi, K. and Horihata, Y.: {\em Weak theories of concatenation and minimal essentially undecidable theories.}  Archive for Mathematical Logic   \textbf{53} (2014), 835-853 

\bibitem{hori}
Horihata, Y.: {\em Weak theories of concatenation and arithmetic.}
Notre Dame Journal of Formal Logic \textbf{53} (2012), 203-222.




\bibitem{karh}
Karhum\"aki, J.,  Mignosi, F. and  Plandowski, W.:  {\em The expressibility of languages and relations by word equations.} Journal of the ACM  \textbf{47}  (2000), 483-505.


\bibitem{cie18}
  Kristiansen, L. and Murwanashyaka, J.: {\em Decidable and undecidable fragments of first-order concatenation theory.}
  In: Florin Manea, Russel G. Miller and Dirk Nowokta (eds), CiE 2018 -- Sailing Routes in the World of Computation,  LNCS 10936,  pp. 244-253, Springer-Verlag 2018. 

\bibitem{kurtz}
Kurtz S.A. and Simon J.:  {\em The undecidability of the generalized Collatz problem.} In: Cai JY., Cooper S.B., Zhu H. (eds),  TAMC 2007 -- Theory and Applications of Models of Computation,
LNCS 4484, pp. 542-553, Springer, Berlin, Heidelberg, 2007.


\bibitem{leary}
Leary, C.  and   Kristiansen, L.: {\em A friendly introduction to mathematical logic.}
 2nd  Edition,  Milne  Library, SUNY Geneseo, Geneseo, NY, 2015.



\bibitem{makanin}
Makanin, G. S.: {\em  The problem of solvability of equations in a free semigroup.} 
Mathematics of the USSR-Sbornik  \textbf{32} (1977), 129-198. 


\bibitem{minsky}
Minsky, M. L.: {\em Recursive unsolvability of Post's problem of "tag" and other topics in theory of Turing machines.}
Annals of Mathematics \textbf{74} (1961),  437-455. 





\bibitem{post}
Post, E. L.:
{\em A variant of a recursively unsolvable problem.}
 Bulletin of the American Mathematical Society \textbf{52} (1946), 264-268.

\bibitem{quine}
 Quine, W. V.:  {\em Concatenation as a basis for arithmetic.}
 Journal of Symbolic Logic
\textbf{11} (1946),  105-114. 

\bibitem{quinerew}
Quine, W. V.:  Review of Hermes \cite{hermes}. Journal of Symbolic Logic \textbf{4} (1939), 87-88. 

\bibitem{senger}
Senger, S.: {\em The existential theory of concatenation over a finite alphabet.} PhD dissertation, Purdue University, 1982).

\bibitem{sved}
Svejdar, V.: {\em On interpretability in the theory of concatenation.}
Notre Dame Journal of
Formal Logic
\textbf{50} (2009), 87-95.



\bibitem{tarski}
Tarski, A.: {\em Der Wahrheitsbegriff in den formalisierten Sprachen.}
Studia Philosophica \textbf{1} (1935),  261-405. 





\bibitem{visser}
Visser, A.: {\em Growing commas. A study of sequentiality and concatenation.}
Notre Dame Journal of Formal Logic \textbf{50} (2009), 61-85.


\end{thebibliography}
\end{document}